\newtheorem{theorem}{Theorem}[section]
\newtheorem{proposition}[theorem]{Proposition}
\newtheorem{lemma}[theorem]{Lemma}
\newtheorem{remark}[theorem]{Remark}
\newtheorem{definition}[theorem]{Definition}
\newtheorem{example}[theorem]{Example}
\newcommand{\bz}{\mathbb{Z}}
\newcommand{\bq}{\mathbb{Q}}
\newcommand{\fix}{\mathrm{Fix}}
\providecommand{\customgenericname}{}
\newcommand{\newcustomtheorem}[2]{%
  \newenvironment{#1}[1]
  {%
   \renewcommand\customgenericname{#2}%
   \renewcommand\theinnercustomgeneric{##1}%
   \innercustomgeneric
  }
  {\endinnercustomgeneric}
}
\begin{document}
\baselineskip=15.5pt

\title[Cohomology and K-theory of generalized Dold manifolds]{Cohomology and K-theory of generalized Dold manifolds fibred by complex flag manifolds} 
\author[M. Mandal]{Manas Mandal} 

\address{The Institute of Mathematical Sciences, (HBNI), Chennai 600113.} 

\email{manasm@imsc.res.in}

\author[P. Sankaran]{Parameswaran Sankaran
}
\address{Chennai Mathematical Institute, Kelambakkam 603103.}
\email{sankaran@cmi.ac.in}
\subjclass[2010]{57R25}
\keywords{Generalized Dold manifolds,  complex flag manifolds, cohomology algebra, complex $K$-theory}
\thispagestyle{empty}
\date{}

\begin{abstract}
Let $\nu=(n_1,\ldots, n_s), s\ge 2,$ be a sequence of positive integers and let $n=\sum_{1\le j\le s}n_j$.  Let $\mathbb CG(\nu)=U(n)/(U(n_1)\times \cdots\times U(n_s))$ be the complex flag manifold.  Denote by $P(m,\nu)=P(\mathbb S^m,\mathbb CG(\nu))$ the generalized Dold manifold 
$\mathbb S^m\times \mathbb CG(\nu)/\langle \theta\rangle $ where 
$\theta=\alpha\times \sigma$ with $\alpha:\mathbb S^m\to \mathbb S^m$ 
being the antipodal map and $\sigma:\mathbb CG(\nu)\to \mathbb CG(\nu)$, the 
complex conjugation.  
The manifold $P(m,\nu)$ has the structure of a smooth $\mathbb CG(\nu)$-bundle over the real projective space $\mathbb RP^m.$  
We determine the additive structure of $H^*(P(m,\nu);R)$ when $R=\mathbb Z$ and its ring structure when $R$ is a commutative ring in which $2$ is invertible.  As an application, we determine the additive  
structure of $K(P(m,\nu))$ almost completely and also obtain 
partial results on its ring structure. The results for the singular  
homology are obtained for generalized Dold spaces $P(S,X)=S\times X/\langle \theta\rangle$, 
where $\theta=\alpha\times \sigma$, $\alpha:S\to S$ is a fixed point free involution 
and $\sigma:X\to X$ is an involution with $\mathrm{Fix}(\sigma)\ne \emptyset,$  
for a much wider class of spaces $S$ and $X$.   
\end{abstract}

\maketitle

\section{Introduction} \label{intro}
Let $\alpha:S\to S$ be a fixed point free involution and let $\sigma:X\to X$ be an involution with non-empty fixed point set.  Denote by $\theta$ the involution $\alpha\times \sigma$ on $S\times X.$
 Then $P(S,X)=S\times X/\langle\theta\rangle$ is a {\it generalized Dold space} considered in \cite{mandal-sankaran}. (See also \cite{nath-sankaran}, \cite{sarkar-zvengrowski}.) 
The special case when $S$ is the sphere $\mathbb S^m$ with antipodal involution and 
$X=\mathbb CP^n$ where $\sigma$ is given by complex conjugation on $\mathbb C^{n+1},$
corresponds to the classical Dold manifold \cite{dold}, denoted $P(m,n)$.

In \cite{mandal-sankaran}, we described the $\mathbb Z_2$-cohomology algebra 
of a generalized Dold spaces $P(S,X)$ for a family of spaces which included, among others, the case $P(\mathbb S^n,\mathbb CG(\nu))$ with antipodal involution on the sphere $\mathbb S^m$ and the complex conjugation on the complex flag manifold $\mathbb CG(\nu)$
of type $\nu=(n_1,\ldots, n_s)$.
(Thus $\sigma$ is induced by the complex conjugation on $\mathbb C^n$, where $\nu=(n_1,\ldots, n_s)$.) 

Our goal in this paper is to study the {\it integral} (co)homology and the complex 
$K$-theory of $P(\mathbb S^m,\mathbb CG(\nu)).$ 
 We denote $P(\mathbb S^m,\mathbb CG(\nu))$ by $P(m,\nu)$. 
We determine the integral cohomology {\it groups} of $P(m,\nu)$.  In fact 
the additive structure of $H^*(P(S,X);\mathbb Z)$ is obtained for a much wider 
class of spaces. 
We obtain a presentation of the cohomology ring of $P(m,\nu)$ with coefficients in a commutative ring $R,$ when $2$ is invertible in $R$.  
The complex $K$-ring of the classical Dold manifold $P(m,n)$ was determined by Fujii in
\cite{fujii-66}, \cite{fujii-69}.  
Note that if $\omega$ is a complex vector bundle over a 
finite CW complex $B$, 
one has the associated  $\mathbb CG(\nu)$-bundle 
$\mathbb CG(\omega;\nu)\to B.$  In this case, the complex $K$-ring of 
$\mathbb CG(\omega;\nu)$ is well-known. 
See \cite[\S3,Chapter IV]{karoubi} and also \cite{atiyah}.  However, it is easily seen that $P(m,\nu)$ is 
not homeomorphic to $\mathbb CG(\omega;\nu)$ for any complex vector bundle $\omega$ over 
$\mathbb RP^m.$  Indeed, the flag manifold bundle $\mathbb CG(\nu)\hookrightarrow\mathbb CG(\omega;\nu)\to \mathbb RP^m$ admits a cohomology extension of the fibre over $\mathbb Z$ and so, by the Leray-Hirsch theorem \cite[\S7, Chapter 5]{spanier}, $H^*(\mathbb CG(\omega;\nu); \mathbb Z)$ is a free $H^*(\mathbb RP^m;\mathbb Z)$-module of rank equal to $\textrm{rank}_{\mathbb Z}(H^*(\mathbb CG(\nu);\mathbb Z))$. This implies, in particular, that the rank of $H^2(\mathbb CG(\omega;\nu);\mathbb Z)$
is positive.  However, $H^2(P(m,\nu);\mathbb Z)$ has rank $0$---see Theorem \ref{Integral-cohomolgy-groups-of-P(m,n,k)}.
Also, it should be noted that when $m\ge 1, ~P(m,\nu)$ is not a homogeneous space 
of $ Spin(m)\times U(n)$, although $\mathbb S^m\times \mathbb CG(\nu)$ is.   
As such the computation of the $K$-theory of $P(m,\nu)$ is a nontrivial 
and interesting problem.  

Our approach to the computation of $K(P(m,\nu))$ is applicable for a larger class of manifolds, such as generalized 
Dold spaces fibred by a non-singular projective toric variety over $\mathbb RP^m,$ 
but we confine our attention to $P(m,\nu)$.

We shall now state the main results of this paper in their weaker form so as to keep notations to a minimum.   

Denote by $\nu_o$ the number of odd numbers in the sequence $\nu$ and let $\lfloor \nu/2\rfloor:=(n'_1,\ldots, n'_s)$ where $n_j':=\lfloor n_j/2\rfloor.$  As usual, 
${n\choose \nu}$ denotes the multinomial coefficient $n!/(n_1!\cdots n_s!)$.  We set 
$H^{\textrm{ev}}(B;\mathbb Z):=\bigoplus_{q\ge 0} H^{2q}(B;\mathbb Z)$; $H^{\textrm{odd}}(B;\mathbb Z)$ is defined analogously.

\begin{customthm}{A}[See Theorem \ref{betti-torsion-coeffecients}]\label{main-c}
Let $m\ge 1,\nu=(n_1,\ldots,n_s), s\ge 2, n:=|\nu|=\sum_{1\le j\le s} n_j.$  Then \\
    \[H^{\mathrm{ev}}(P(m,\nu);\mathbb Z)\cong \mathbb Z^{b_e}\oplus \mathbb Z_2^{b'_e},~~
    H^{\mathrm{odd}}(P(m,\nu);\mathbb Z)\cong \mathbb Z^{b_o}\oplus\mathbb Z_2^{b'_o}\] where 
    \[b_e={n\choose \nu},~b_o=0, \textrm{if~} m\equiv 0\pmod 2,\]
    \[ b_e=b_o=\ell_e=\begin{cases}
      {n\choose \nu}/2, & \textrm{~if~} \nu_o\ge 2\\
      {n\choose \nu}+{\lfloor n/2\rfloor \choose \lfloor \nu/2\rfloor }& \textrm{~if~} \nu_o\le 1,\\
    \end{cases} ~\textrm{if~}m\equiv 1\pmod 2,
    \]
    and, \[b'_e=\lfloor m/2\rfloor\cdot \ell_e, b'_o=\lfloor (m-1)/2\rfloor \cdot \ell_e.\]
\end{customthm}

A description of $H^*(P(m,\nu);R)$ as a quotient of a polynomial ring is given in 
Theorem \ref{cohomologyringofPmnu}, 
when $R$ is any commutative ring in which 
$2$ is invertible. 

Next, we state our result on the additive structure of $K^*(P(m,\nu)).$
\begin{customthm}{B}[See Theorem \ref{k-groups-pmnu}] \label{main-k}
    With notations as in Theorem \ref{main-c},\\ 
  (i)  $K^0(P(m,\nu))\cong \mathbb Z^{b_e}\oplus \mathbb Z_2^{\lfloor m/2\rfloor}\oplus A_0$ where $o(A_0)=2^t$ for some $t$, $0\le t\le b'_e-\lfloor m/2\rfloor.$\\
  (ii) $K^1(P(m,\nu))\cong \mathbb Z^{b_o}\oplus A_1$ where $o(A_1)=2^t$ for some $t$, $0\le t\le b'_o.$

\end{customthm}

We describe in \S4, a subring $\mathcal K^0$ of $K^0(P(m,\nu))$ generated by the classes of certain vector bundles and show that as an abelian group, $K^0(P(m,\nu))/\mathcal K^0$ is finite. See Theorem \ref{kpmnu-cofinite-ring}.

{\it Notation:} Let $f:X\to Y$ be a continuous map. 
We shall use the same symbol $f^!$ for pull-back of a vector bundle 
over $Y$ to $X$ and also for the induced homomorphism $K^*(Y)\to K^*(X)$ so as to avoid 
clash with the notation $f^*$ for the induced map in cohomology.

\section{Preliminaries}
In this section, we shall obtain some basic results concerning the topology 
of generalized Dold spaces $P(S,X)$ which will be needed for our purposes. 
Some results (and notations) from \cite{nath-sankaran} and \cite{mandal-sankaran} 
will also be recalled.

Let $(S,\alpha),(X,\sigma)$ be as in the Introduction. Then $\theta=\alpha\times \sigma:S\times X\to S\times X$ is a fixed point free involution. 
Set $Y:=S/\!\!\sim_\alpha$ and $X_\mathbb R:=\fix(\sigma)$.
Note that one has a fibre bundle with projection $p:P(S,X)\to Y$, defined as $[u,x]
\mapsto [u]\in Y$, with fibre space $X$.   
A useful fact 
is that we have an inclusion $Y\times X_\mathbb R\hookrightarrow P(S,X)$ defined as 
$([u],x)\mapsto [u,x]$ and the projection $p|_{Y\times X_\mathbb R}$ 
is the first projection 
$pr_1: Y\times Fix(\sigma)\to Y$.  In particular, for 
any $x\in X_\mathbb R$, we have a cross-section $Y\to P(S,X)$ defined as $s_x([u])=[u,x].$

We generalize the construction of the vector bundle $\hat{\omega}$ over $P(S,X)$ for a $\sigma$ conjugate vector bundle $\omega$ over $X$. 

Suppose that $\omega$ is a real vector bundle 
over a space $B$ with projection $p_\omega:E(\omega)\to B$.  
Let $f:B\to B$ be a fixed point free involution that is covered by an involutive bundle automorphism $\hat {f}:E(\omega )\to E(\omega).$   We obtain a vector bundle $\hat{\omega}$ over $\bar B:=B/\!\!\sim_f$ with total space $E(\hat{\omega})=E(\omega)/\!\sim_{\hat{f}}$.  The bundle projection $E(\hat{\omega})\to \bar B$ sends $[v]=\{v,\hat{f}(v)\}$ to $[b]$ for all $v\in E_{b}(\omega).$  If $\omega$ is a complex vector bundle and $\hat{f}$ is a complex vector bundle morphism, then $\hat{\omega}$ is a 
complex vector bundle over $\bar B$.  We emphasize that the isomorphism class of 
$\hat \omega$ depends not only on $\omega$, but also on $\hat f$.

Recall from \cite{nath-sankaran} that a $\sigma$-conjugate vector bundle $\eta$ over $(X,\sigma)$ is a (real) vector bundle with a bundle involution $\hat\sigma:E(\eta)\to E(\eta)$ that covers $\sigma:X\to X$.   Such a pair $(\eta,\hat\sigma)$ 
leads to the construction of a real vector bundle $\hat \eta$ over $P(S,X)$ as follows:  The bundle involution $\hat \sigma$ yields a fixed point free involution 
$\hat \theta:S\times E(\eta)\to S\times E(\eta)$ which is also 
a vector bundle morphism of $pr_2^!(\eta)$ that covers $\theta.$  It follows that 
$P(S, E(\eta))\to P(S,X)$ is the projection of a vector bundle, denoted $\hat\eta.$

A complex vector bundle $\omega$ is said  
to be a $\sigma$-{\em conjugate vector bundle} if $\hat \sigma:E(\omega)\to E(\omega)$ is a bundle involution that covers $\sigma$ and is {\em conjugate complex 
linear} on the fibres of $\omega$.  Then the vector bundle $\hat\omega$ on $P(S,X)$ is isomorphic to $\hat{\bar{\omega}}.$  Moreover, its complexification  
$\hat\omega\otimes_\mathbb R\mathbb C$ restricts to $\omega\oplus 
\bar\omega$ on any fibre of the $X$-bundle $p:P(S,X)\to Y$.

The construction of $\hat\omega$  behaves well with respect to Whitney sum, 
tensor products, taking exterior powers, etc., so long as all the bundles involved are  $\sigma$-conjugate (complex) vector bundles, i.e., the respective bundle involutions 
cover the {\em same} $\sigma:X\to X.$  (See \cite{nath-sankaran}.)

\subsection{Half-spin bundles over $\mathbb S^{2r}$}\label{bott-bundle}

Assume that $m=2r$ is even.  An explicit description of a complex vector bundle $\xi$ 
over $\mathbb S^m$ such that $[\xi]-\textrm{rank}(\xi)$ is a generator 
of $\tilde K(S^m)\cong \mathbb Z$ is well-known.   See \cite{atiyah-hirzebruch}, \cite{bott}. 
For the sake of completeness, we give the details.  

Explicitly, $\xi$ may be taken to be associated 
to a half-spin representation of $\textrm{Spin}(m)$ where $\mathbb S^m$ is viewed as the homogeneous space $\textrm{Spin}(m+1)/\textrm{Spin}(m)$. We shall explain this 
construction below, assuming familiarity with the basic properties of the spin groups, as given in \cite{husemoller}. 

We consider $\mathbb R^{m+1}$ as the standard $SO(m+1)$-representation. It becomes 
a representation of $\textrm{Spin}(m+1)$ via the projection $\textrm{Spin}(m+1)\to 
SO(m+1).$
Regard $\textrm{Spin}(m)\subset \textrm{Spin}(m+1)$ is the subgroup that stabilizes 
the vector $e_1\in \mathbb R^{m+1}.$ 
Consider bundles $\xi^+,\xi^-$ 
associated to the 
half-spin (complex) representations $\Delta^+,\Delta^-$ of the spin group $\textrm{Spin}(m)$. Then $[\xi^+]-2^{r-1}$ is a generator of 
$\tilde K(\mathbb S^m)$.  See  \cite{atiyah-hirzebruch}, \cite[Theorem 13.3, Chapter 14]{husemoller}.
(Note that the degrees of the 
representations $\Delta^\pm$ equal $2^{r-1}.$)  It turns out that 
$[\xi^+]+[\xi^{-}]=2^r$ in $\tilde K(\mathbb S^m).$ 
In fact, the representation 
$\Delta^+\oplus \Delta^-$ is the restriction of the spin representation $\Delta$ of 
$\textrm{Spin}(m+1)$.  As the bundle associated to $\Delta|_{\textrm{Spin}(m)}$ 
is trivial, we obtain that $\xi^+\oplus \xi^-=2^r\epsilon_\mathbb C$, whence 
\begin{equation}\label{xiplusminus}
[\xi^+]-2^{r-1}=-([\xi^-]-2^{r-1}).
\end{equation}

\begin{theorem} \label{half-spin-bundles on even spheres} Let $m=2r$ be even.
The ring $K^0(\mathbb S^m)$ is isomorphic to the truncated polynomial ring $\mathbb Z[u_m]/\langle u_m^2\rangle $ under an isomorphism that sends $u_m$ to $[\xi^+]-2^{r-1}.$   In particular, $[\xi^\pm]^2=2^r[\xi^\pm]-2^{2r-2},$ and, $[\xi^+][\xi^-]=2^{2r-2}.$  Moreover, $\alpha^!:K^0(\mathbb S^m)\to K^0(\mathbb S^m)$ sends $[\xi^+]$ to $[\xi^-].$ 
\end{theorem}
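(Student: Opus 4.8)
The plan is to deduce the theorem from two inputs already recalled in the text: that $[\xi^+]-2^{r-1}$ generates $\tilde K(\mathbb S^m)\cong\mathbb Z$, and that $K^0(\mathbb S^m)=\mathbb Z\oplus\tilde K(\mathbb S^m)$ additively (Bott periodicity). First I would pin down the ring structure of $K^0(\mathbb S^m)$: it suffices to show that the square of any reduced class vanishes. For $a,b\in\tilde K(\mathbb S^m)$ the product $ab$ is the pullback $\Delta^*(a\times b)$ of the external product $a\times b\in\tilde K(\mathbb S^m\wedge\mathbb S^m)\cong\tilde K(\mathbb S^{2m})$ along the reduced diagonal $\Delta\colon\mathbb S^m\to\mathbb S^m\wedge\mathbb S^m$; but $\Delta$ is null-homotopic for dimensional reasons, since $\pi_m(\mathbb S^{2m})=0$ for $m\ge 1$. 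Hence $\tilde K(\mathbb S^m)^2=0$, so setting $u_m:=[\xi^+]-2^{r-1}$ yields an isomorphism $K^0(\mathbb S^m)\cong\mathbb Z[u_m]/\langle u_m^2\rangle$.

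The explicit product formulas then follow by substitution. From $[\xi^+]=u_m+2^{r-1}$ and $u_m^2=0$ one gets $[\xi^+]^2=2^ru_m+2^{2r-2}=2^r[\xi^+]-2^{2r-2}$. Equation \eqref{xiplusminus} says $[\xi^-]-2^{r-1}=-u_m$, so $[\xi^-]=2^{r-1}-u_m$, and the same computation gives $[\xi^-]^2=2^r[\xi^-]-2^{2r-2}$; finally $[\xi^+][\xi^-]=(2^{r-1}+u_m)(2^{r-1}-u_m)=2^{2r-2}-u_m^2=2^{2r-2}$.

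For the last assertion I would compute the degree of the antipodal map $\alpha\colon\mathbb S^m\to\mathbb S^m$: as a product of $m+1$ reflections it has degree $(-1)^{m+1}=-1$, since $m$ is even. A self-map of $\mathbb S^m$ of degree $d$ factors up to homotopy as the pinch map $\mathbb S^m\to\bigvee_d\mathbb S^m$ followed by the fold map, hence induces multiplication by $d$ on every reduced cohomology theory evaluated on $\mathbb S^m$, in particular on $\tilde K(\mathbb S^m)$. Therefore $\alpha^!u_m=-u_m$, and $\alpha^![\xi^+]=\alpha^!(u_m+2^{r-1})=2^{r-1}-u_m=[\xi^-]$ by \eqref{xiplusminus}. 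The only step that is not purely formal is this identification of $\alpha^!$ with $-1$ on $\tilde K(\mathbb S^m)$, i.e.\ the fact that the antipodal map on an even-dimensional sphere is orientation-reversing; granting the generator statement for $[\xi^+]-2^{r-1}$, the rest is bookkeeping.
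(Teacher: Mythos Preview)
Your argument is correct and differs from the paper's in two places. For the ring structure, the paper simply cites \cite{atiyah-hirzebruch} and \cite{husemoller}, while you supply a self-contained reason why $\tilde K(\mathbb S^m)^2=0$ via the null-homotopy of the reduced diagonal $\mathbb S^m\to\mathbb S^m\wedge\mathbb S^m\simeq\mathbb S^{2m}$; this is a nice elementary justification that works for any reduced cohomology theory. For the action of $\alpha$, the paper argues via the Chern character: since $\mathrm{ch}\colon K^0(\mathbb S^m)\to H^*(\mathbb S^m;\mathbb Q)$ is injective and natural, and $H^m(\alpha;\mathbb Q)=-\mathrm{id}$, one gets $\alpha^!(u_m)=-u_m$ directly. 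Your route through the degree is equally valid but your phrasing ``factors as pinch $\mathbb S^m\to\bigvee_d\mathbb S^m$ followed by fold'' literally makes sense only for $d\ge 1$; for $d=-1$ you should invoke the co-$H$-space structure on $\mathbb S^m$ (the homotopy inverse of $\mathrm{id}$ acts as $-1$ on any reduced theory), or simply note that $\alpha^!$ is an involution of $\tilde K(\mathbb S^m)\cong\mathbb Z$, hence $\pm 1$, and rule out $+1$ by the Chern character as the paper does. The product computations are identical in both approaches.
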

\begin{proof}  The description of the ring structure 
and the fact that $u_m$ is represented by $[\xi^+]-2^{r-1},$ are well-known. 
(See \cite{atiyah-hirzebruch}
and \cite{husemoller}.)
The asserted quadratic relation follows from 
$([\xi^+]-2^{r-1})^2=u_m^2=0.$ Since $[\xi^+]+[\xi^-]=2^r$, the same relation 
is satisfied by $[\xi^-].$  Also, $[\xi^+][\xi^-]=[\xi^+](2^r-[\xi^+])=2^{2r-2}.$

It remains to show that $\alpha^!(u_m)=-u_m.$
This follows from the following facts: $H^m(\alpha;\mathbb Q)$ is $-id$, the Chern character $\mathrm{ch}:K(\mathbb S^m)
\to H^*(\mathbb S^m;\mathbb Q)$ is a monomorphism, and $\alpha^*\circ \textrm{ch}=\textrm{ch}\circ \alpha^!$.  
\end{proof}

We have not been able to find a natural and explicit bundle isomorphism $\xi^-\to \xi^+$ which covers the antipodal map $\alpha:\mathbb S^m\to \mathbb S^m$.  For this reason, we work with the pull-back bundle $\alpha^!(\xi^+)$, which 
will suffice for our purposes.

Denote by $\eta^-$ the pull-back bundle $\alpha^!(\xi^+).$   When $m\ge 2$ is even, we have $ 2^{r-1}\ge  m/2$ 
and any two vector bundles of rank $2^{r-1}$ which represent the same class in $K(\mathbb S^m)$ 
are isomorphic, using \cite[Theorem 1.5, Chapter 8]{husemoller}.

For future reference, we have the following remark.
\begin{remark}\label{eta-xi}
    From the above discussion, we have $[\eta^-]=[\xi^-]=2^r-[\xi^+]$ in $K(\mathbb S^m)$ for all $m\equiv 0\pmod 2$. 
\end{remark}

We have a bundle isomorphism $\tilde \alpha^-:E(\eta^-)\subset \mathbb S^m\times E(\xi^+)\to E(\xi^+)$ given by the second projection that covers the antipodal map $\alpha$.   Explicitly, $\tilde \alpha^-:E(\eta^-)\to E(\xi^+)$ is the map that $(v,e)\in E_v(\eta^-)$ to $e\in E_{-v}(\xi^+)$.  Then $\tilde \alpha^-$ is a bundle map that covers $\alpha$.  Similarly $\tilde \alpha^+:E(\xi^+)\to E(\eta^-)$ is the bundle map that sends $e\in E_v(\xi^+)
$ to $(-v,e)\in E_{-v}(\eta^-)$.  Henceforth, we shall identify the fibre $E_v(\eta^-)=\{v\}\times E_{-v}(\xi^+)$ with 
$E_{-v}(\xi^+)$ so that the total space $E(\eta^-)$ is identified with $E(\xi^+).$  We have, for any $v\in \mathbb S^m$ and $e\in E_v(\xi^+)$, $\tilde \alpha^+(\tilde \alpha^-(e))=e$, i.e., $\tilde \alpha^+\circ \tilde \alpha^-=id_{\eta^-}$.  Similarly $\tilde \alpha^-\circ \tilde \alpha^+=id_{\xi^+}$.  

Set $\tilde \xi:=\xi^+\oplus \eta^-$.  A vector in the fibre $E_v(\tilde \xi)$ over $v\in \mathbb S^m$ will be denoted by a triple $(v;e,e'), e\in E_v(\xi^+), e'\in E_{-v}(\xi^+)=E_v(\eta^-), v\in \mathbb S^m.$  
Define 
$\tilde\alpha: E(\xi^+\oplus \eta^-)\to E(\xi^+\oplus \eta^-)$ as $\tilde\alpha (v;e,e')=(-v;
\tilde\alpha^-_v(e'), \tilde\alpha^+_{v}(e))$.   Then $\tilde\alpha $ covers $\alpha$ and 
$\tilde\alpha\circ \tilde \alpha=id.$  

We therefore obtain a complex 
vector bundle $\xi^0$ over $\mathbb RP^m$ where 
$E(\xi^0):=E(\tilde \xi)/\langle \tilde\alpha\rangle$.   
A point of $E_{[v]}(\xi^0)$ is $[v;x,y]=\{ (v;x,y), (-v; y,x)\}$ where $x\in E_v(\xi^+), y\in E_v(\eta^-)=E_{-v}(\xi^+)$.

    Let us consider the bundle maps $\tilde\beta^+:E(\eta^-) \subset \mathbb S^m\times E(\xi^+)\to E(\xi^+)$ defined by $(v,e)\mapsto ie\in E_{-v}(\xi^+)$ and similarly $\tilde \beta^-:E(\xi^+)\to E(\eta^-)$ which maps $e\in E_{v}(\xi^+)$ to $(-v,ie)\in E_{-v}(\eta^-)$. These two bundle maps cover $\alpha$. It is easy to see that $\tilde\beta^+\circ\tilde\beta^-=-id_{E(\eta^-)}$ and $\tilde\beta^-\circ\tilde\beta^+=-id_{E(\xi^+)}$. 
    Define $\tilde\beta: E(\tilde\xi)\to E(\tilde \xi)$, $(v;e,e')\mapsto (-v;-\tilde \beta^-(e'),\tilde \beta^+(e) )$ which is a bundle involution covering $\alpha$.
    Now we can form a vector bundles $\xi^1$ over $\mathbb RP^m$ where $E(\xi^1)=E(\tilde \xi)/\langle\tilde\beta\rangle$. A point of $E_{[v]}(\xi^1)$ is $[v;x,y]=\{ (v;x,y), (-v; -iy,ix)\}$ where $x\in E_v(\xi^+), y\in E_v(\eta^-)=E_{-v}(\xi^+)$.
\begin{remark}
    We note that $\xi^0$ and $\xi^1$ are isomorphic. An explicit isomorphism can be defined by $E(\xi^0)\ni[v;x,y]\mapsto [v; -ix,y]\in E(\xi^1).$
 
\end{remark}

\section{Cohomology of certain generalized Dold manifolds}
In \cite{mandal-sankaran}, we determined the singular mod $2$
cohomology of certain families of generalized Dold spaces. 
Here we consider the integral cohomology groups of $P(S ,X).$ 
We assume that $X$ has a CW structure with cells only in even dimensions and 
that each cell is stable by the involution $\sigma.$
Examples of such spaces are complex flag manifolds and complex projective toric varieties 
where the involution is given by complex conjugation. In the case of 
complex flag manifolds the CW structure is given by Schubert varieties (see \cite{borel-lag})
and, in the case of complex projective toric varieties, the required cell structure 
is obtained as orbit closures for the natural complex torus $\cong (\mathbb C^*)^n$ 
action on it.

{\bf Notation:}  Let $B$ be a (locally finite) CW complex.   We shall denote the closed cells of $B$ by $e,e_j,$ etc., and the corresponding open cells by $\mathring {e}, \mathring {e}_j$, etc.   An orientation on a 
(closed) $k$-cell $e$ in $B$ is the choice of a generator of $H_k(e,\partial e;\mathbb Z)\cong 
H_k(\mathbb S^k;\mathbb Z).$ An orientation on $e$ is equivalent to an orientation of 
the open cell $\mathring e\cong\mathbb R^k.$  

\subsection{Homology of $P(S,X)$}
In this section, we shall consider the homology of $P(S,X)$ where $X$ is a connected locally finite CW complex that has only even dimensional cells.  We assume that $S$ is a connected locally finite CW complex.  Further restrictions on $S$ and $X$ will be placed later as and when required. 
Some of the results here are established in the context of a double cover $p:\widetilde B\to B$ with appropriate hypotheses on $\widetilde B$.

\begin{lemma}\label{conjugation-cw}
Let $X$ be a connected locally finite CW complex with only even dimensional cells.  Suppose 
that $\sigma:X\to X$ is an involution that stabilizes each cell in $X$ and that 
$\sigma$ is orientation preserving on 
any $2k$-dimensional cell $e$ if and only if $k$ is even.
Then $\sigma_*$ acts  by $(-1)^k$ on $H_{2k}(X;\mathbb Z).$\hfill $\Box$
\end{lemma}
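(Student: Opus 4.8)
The plan is to compute $H_*(X;\mathbb Z)$ cell by cell and track the effect of $\sigma_*$ on the cellular chain complex. Since $X$ has only even-dimensional cells, the cellular chain complex $C_*(X)$ has $C_{2k}(X)$ free on the $2k$-cells and $C_{2k+1}(X)=0$. Hence all boundary maps vanish, $C_*(X)$ \emph{is} $H_*(X;\mathbb Z)$, and $H_{2k}(X;\mathbb Z)$ is the free abelian group on the (oriented) $2k$-cells. So it suffices to understand how $\sigma_*$ acts on each generator $[e]\in H_{2k}(X;\mathbb Z)$ coming from a fixed $2k$-cell $e$.

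First I would fix a $2k$-cell $e$ with characteristic map $\Phi_e\colon (D^{2k},S^{2k-1})\to (X^{(2k)},X^{(2k-1)})$, where $X^{(j)}$ denotes the $j$-skeleton. Because $\sigma$ stabilizes each cell, $\sigma$ maps $X^{(j)}$ to itself and carries $e$ to itself; the class $[e]\in H_{2k}(X^{(2k)},X^{(2k-1)};\mathbb Z)$ is the image of the fundamental class under $(\Phi_e)_*$, and an orientation of $e$ is exactly a choice of generator of $H_{2k}(e,\partial e;\mathbb Z)\cong\mathbb Z$, as recalled in the Notation paragraph. Now $\sigma$ restricted to the pair $(e,\partial e)$ induces a self-map of $H_{2k}(e,\partial e;\mathbb Z)\cong\mathbb Z$, which is $+1$ if $\sigma|_e$ is orientation preserving and $-1$ otherwise; by hypothesis this sign is $+1$ when $k$ is even and $-1$ when $k$ is odd, i.e.\ it equals $(-1)^k$. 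Passing along the commuting square relating $H_{2k}(e,\partial e)$, $H_{2k}(X^{(2k)},X^{(2k-1)})$ and the quotient complex, I conclude $\sigma_*[e]=(-1)^k[e]$ on the cellular chain group $C_{2k}(X)$. Since every generator transforms by the \emph{same} scalar $(-1)^k$, the induced map $\sigma_*$ on $H_{2k}(X;\mathbb Z)=C_{2k}(X)$ is multiplication by $(-1)^k$.

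The one point requiring a little care — and the step I expect to be the mildest obstacle — is the compatibility of the orientation-preserving/reversing dichotomy across the different homology groups: one must check that the sign by which $\sigma$ acts on $H_{2k}(e,\partial e;\mathbb Z)$ for the abstract cell agrees with the sign by which $\sigma_*$ acts on the corresponding summand of the relative skeletal homology $H_{2k}(X^{(2k)},X^{(2k-1)};\mathbb Z)\cong\bigoplus_e H_{2k}(e,\partial e;\mathbb Z)$. This follows from naturality of the excision isomorphism $H_{2k}(X^{(2k)},X^{(2k-1)})\cong\bigoplus_e H_{2k}(\mathring e\cup\{*\},\{*\})$ with respect to $\sigma$, using that $\sigma$ permutes the cells (trivially, by hypothesis) and hence respects the direct sum decomposition. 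Once this bookkeeping is in place the result is immediate, and no genuine computation beyond the sign tracking is needed.
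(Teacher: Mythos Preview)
Your proof is correct and is exactly the straightforward argument the paper has in mind: the authors write ``We omit the proof, which is straightforward'' and give no argument at all. Your observation that the vanishing of all cellular differentials identifies $H_{2k}(X;\mathbb Z)$ with the free abelian group on the $2k$-cells, together with the sign-tracking via naturality of excision, is precisely the intended verification.
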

We omit the proof, which is straightforward.

Suppose that $p:\tilde B\to B$ is a double covering projection where $B$ is a connected locally finite CW complex. We obtain a $G$-equivariant CW structure on $\tilde B$ by lifting the cell structure on $B$ where $G\cong \mathbb Z_2$ is the deck transformation group. Denote by $\phi: \tilde B\to \tilde B$ the involution that generates $G$.
For each (closed) cell $e$ of $B$, we have a pair of cells $e^+,e^-$ in $\tilde B$ such that $p(e^\pm)=e$ and $\phi(e^+)=e^-$ (as unoriented cells).

Let $R$ be any commutative ring. (It is always assumed that $0\ne 1\in R.$)
We denote by $(C_*(B;R),\partial)$ the cellular chain complex with $R$-coefficients.
Recall that $C_r(B;R)$ is the free $R$-module with basis the set of all closed oriented 
$r$-cells of $B$ modulo the relations $e +e'=0$ 
where $e'$ is the same underlying cell as $e$ but with opposite orientation. 
(We denote by the same symbol $e$  an oriented cell as well as the corresponding element in the 
cellular chain group $C_k(B;R )$.)
It is clear 
that $C_r(B;R)$ is isomorphic to the free $R$-module with basis the set of 
{\em unoriented} $r$-cells of $B$. 

For each oriented cell $e$ in $B$, fix orientations on $e^+$ and $e^-$ so that $p|_{e^\pm}:e^\pm\to e$ is orientation preserving; thus $p_*(e^\pm)=e$. Then $\phi|_{e^+}:e^+\to e^-$ is orientation preserving since $p=p\circ \phi.$

Suppose that $2$ is invertible in $R$.  
We set $\varepsilon^+:=(e^++e^-)/2$ and $\varepsilon ^-:=(e^+-e^-)/2$ for each closed cell $e$ 
of $B$.  Then $C_r(\tilde B;R)=C_r^+(\tilde B;R)\oplus C_r^-(\tilde B;R)$ as a direct sum 
of $R$-modules where $C^+_r(\tilde B;R)$ is the free $R$-module with basis 
$\{\varepsilon_i^+\}_i$ as $e_i$ varies over the set of $r$-cells of $B$.  
Similarly $\{\varepsilon^-_i\}_i$ is an $R$-basis for 
$C_r^-(\tilde B;R)$.  It is readily seen that $C^+_r(\tilde B;R)=\fix(\phi_*)$, the $R$-submodule of $C_r(\tilde B;R)$ element-wise fixed by $\phi_*$ and that $\phi_*$ acts as multiplication by $-1$ on 
$C_r^-(\tilde B;R)$. 
Clearly $p_*$ maps $C_r^+(\tilde B; R)$ isomorphically onto $C_*(B;R)$ and vanishes identically on $C_r^-(\tilde B;R).$  Note that $\partial:C_*(\tilde B;R)\to C_*(\tilde B;R)$ 
maps $C_*^\pm(\tilde B;R)$ to itself since $\partial \circ \phi_*=\phi_*\circ \partial.$  Hence 
$H_r(\tilde B;R) $ breaks up as  a direct sum of two $R$-modules on which $\phi_*$ acts 
by $1$ and $(-1)$ respectively.

\begin{lemma} \label{ker-charFne2}
We keep the above notations. Let $R$ be any commutative ring in which $2$ is invertible.
Then $p_*:H_*(\tilde B;R)\to H_*(B;R)$ maps $\fix(\phi_*)$ isomorphically onto $H_*(B;R).$ Moreover
$p_*[z]=0$ if $\phi_*([z])=-[z]$.
\end{lemma}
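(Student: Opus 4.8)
The plan is to work directly with the chain-level decomposition $C_*(\tilde B;R)=C_*^+(\tilde B;R)\oplus C_*^-(\tilde B;R)$ established in the paragraph preceding the lemma, and to upgrade it from chains to homology. First I would record that since $\partial$ commutes with $\phi_*$, it preserves each summand, so the decomposition is a decomposition of chain complexes $(C_*(\tilde B;R),\partial)=(C_*^+(\tilde B;R),\partial)\oplus(C_*^-(\tilde B;R),\partial)$; hence $H_*(\tilde B;R)=H_*(C^+_*)\oplus H_*(C^-_*)$, and because $\phi_*$ acts by $+1$ on $C^+_*$ and by $-1$ on $C^-_*$, we get $H_*(C^+_*)\subseteq \fix(\phi_*)$ while $\phi_*=-\mathrm{id}$ on $H_*(C^-_*)$. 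Since $2$ is invertible, an element fixed by $\phi_*$ has its $C^-_*$-component annihilated by $2$, hence zero, so in fact $\fix(\phi_*)=H_*(C^+_*)$ exactly, and the complementary summand is $E_*:=H_*(C^-_*)$, on which $\phi_*$ acts as $-\mathrm{id}$.

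Next I would analyze $p_*$ through the same decomposition. The map $p_*:C_*(\tilde B;R)\to C_*(B;R)$ sends $\varepsilon^+_i\mapsto e_i$ and $\varepsilon^-_i\mapsto 0$, i.e. it restricts to an isomorphism $C^+_*(\tilde B;R)\xrightarrow{\ \cong\ }C_*(B;R)$ of graded $R$-modules and vanishes on $C^-_*(\tilde B;R)$. I must check this isomorphism is a chain map, i.e. intertwines the two differentials; this follows because $p$ is a cellular map, so $p_*\circ\partial=\partial\circ p_*$ on all of $C_*(\tilde B;R)$, and restricting to $C^+_*$ (which $\partial$ preserves) gives a chain isomorphism $C^+_*(\tilde B;R)\cong C_*(B;R)$. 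Passing to homology, $p_*$ carries $H_*(C^+_*)=\fix(\phi_*)$ isomorphically onto $H_*(B;R)$, and kills $H_*(C^-_*)$. In particular, if $[z]\in H_*(\tilde B;R)$ satisfies $\phi_*[z]=-[z]$, then its $C^+_*$-homology component is fixed and equals its own negative, hence is $2$-torsion, hence zero; so $[z]\in H_*(C^-_*)$ and $p_*[z]=0$. This gives both assertions of the lemma.

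The only real subtlety — and the step I would be most careful about — is that one needs the chain-level statements from the preamble to genuinely be chain-level, not just module-level: specifically that $p_*:C^+_*(\tilde B;R)\to C_*(B;R)$ commutes with $\partial$ (true since $p$ is cellular and $C^+_*$ is $\partial$-stable) and that $\partial$ really does preserve the $\pm$ splitting (true since $\partial\phi_*=\phi_*\partial$ and the splitting is into $\pm1$ eigenspaces of $\phi_*$, which is legitimate precisely because $2$ is invertible). Everything else is formal: the functor $H_*$ is additive, so it respects the direct-sum decomposition of chain complexes, and it takes the chain isomorphism onto $C_*(B;R)$ to an isomorphism in homology. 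No spectral sequence or transfer argument is needed here — the transfer-type content is exactly the explicit averaging idempotent $\tfrac12(1+\phi_*)$ that produces the $\varepsilon^+_i$, which the preamble has already set up.
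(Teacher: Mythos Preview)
Your proof is correct and follows essentially the same approach as the paper: both use the chain-level splitting $C_*(\tilde B;R)=C_*^+\oplus C_*^-$, its $\partial$-stability, and the fact that $p_*$ restricts to an isomorphism $C_*^+\to C_*(B;R)$ and vanishes on $C_*^-$. Your packaging via ``$C_*^+\to C_*(B;R)$ is a chain isomorphism, hence induces a homology isomorphism'' is slightly more streamlined than the paper's explicit hand-verification of injectivity and surjectivity, but the underlying argument is the same.
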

\begin{proof}
 Since $p_*=p_*\circ \phi_*$ in homology, if 
$\phi_*([z])=-[z]$, then $p_*[z]=-p_*[z]$. Since 
$2$ is invertible in $R$, we have $p_*([z])=0.$
 
Suppose that $\phi_*[z]=[z]$.  Write 
$z=z^++z^-$ where $z^\pm\in C_r^\pm (\tilde B;R).$
Since $\partial z_i^\pm \in C_r^\pm(\tilde B;R)$ (as observed above), we have $\partial z^\pm=0$ and so $[z]=[z^+]+[z^-]$.  Since $[z]=\phi_*([z])=[z^+]-[z^-]$, we have $2[z^-]=0$ which implies that $[z^-]=0$ as $2$ is a unit in $R$.  So we may (and do) assume that $z=z^+$.  

Suppose that $p_*[z]=0.$ 
Write $p_*(z)=\partial (\sum a_je_j)$ and set $\tilde u:=\sum a_j\varepsilon_j^+\in C_{r+1}^+(\tilde B)$.  Then $p_*(\partial (\tilde u))=\partial p_*(\tilde u)=p_*(z).$
Since $p_*:C_*^+(\tilde B;R)\to C_*(B;R)$ is a monomorphism, we have $z=\partial \tilde u$ and so $[z]=0.$  Thus $p_*|_{\fix(\phi_*)}$ is a monomorphism.

Next suppose that $z=\sum a_je_j$ is a cycle in $B$ (with $a_j\in R$).  Then $p_*(\tilde z)=z$ where 
$\tilde z=\sum a_j \varepsilon_j\in C_r^+(\tilde B;R).$    Since $p_*(\partial \tilde z)
=\partial z=0$ and since $p_*:C_*^+(\tilde B;R)\to C_*(B;R)$ is a monomorphism, 
we see that $\partial \tilde z=0$. Clearly $p_*([\tilde z])=[z]$ and so $p_*$ is an epimorphism.
\end{proof}

As an immediate corollary, we obtain the following.

\begin{proposition} \label{homology-integral-fixed}
Let $[z]\in H_r(\tilde B;\mathbb Z).$
If $p_*([z])=0$ and $\phi_*([z])=[z],$
then $2^m[z]=0$ for some $m\ge 1$.
 If 
$\phi_*([z])=-[z]$, then $2p_*([z])=0$.  \hfill $\Box$
\end{proposition}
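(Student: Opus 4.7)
The plan is to deduce this corollary directly from Lemma \ref{ker-charFne2} by extending scalars from $\mathbb Z$ to the localization $\mathbb Z[1/2]$.

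The second assertion will be purely formal: since $p\circ \phi = p$, the induced map in integer homology satisfies $p_* = p_*\circ \phi_*$, so $\phi_*[z] = -[z]$ forces $p_*[z] = -p_*[z]$, whence $2 p_*[z] = 0$. No cell-structure hypotheses are needed here.

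For the first assertion, I would consider the class $[z]\otimes 1 \in H_r(\tilde B; \mathbb Z[1/2])$. By naturality of the coefficient-change map, this class still satisfies $\phi_*([z]\otimes 1) = [z]\otimes 1$ and $p_*([z]\otimes 1) = 0$. Since $2$ is a unit in $\mathbb Z[1/2]$, Lemma \ref{ker-charFne2} applies with $R = \mathbb Z[1/2]$ and says precisely that $p_*$ is injective on $\fix(\phi_*)$; hence $[z]\otimes 1 = 0$ in $H_r(\tilde B; \mathbb Z[1/2])$.

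The final step is to translate this vanishing back into a statement about $[z]$ itself. Because $\mathbb Z[1/2]$ is flat over $\mathbb Z$, the universal coefficient theorem identifies $H_r(\tilde B;\mathbb Z[1/2])$ with $H_r(\tilde B;\mathbb Z)\otimes_\mathbb Z \mathbb Z[1/2]$, and the kernel of the localization map $M \to M\otimes_\mathbb Z \mathbb Z[1/2]$ on any abelian group $M$ is exactly the $2$-primary torsion subgroup. Thus $[z]\otimes 1 = 0$ is equivalent to $2^m[z] = 0$ for some $m\ge 1$. This last bookkeeping step — matching the kernel of localization with the $2$-primary torsion — is the only real point that needs verifying, and it is entirely standard; everything else is either naturality or a direct appeal to the preceding lemma.
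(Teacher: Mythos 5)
Your proof is correct and is essentially the argument the paper is implicitly invoking by calling the proposition ``an immediate corollary'' of Lemma \ref{ker-charFne2}: the second assertion is the integral shadow of the first line of that lemma's proof, and the first assertion follows by applying the lemma over $R=\mathbb Z[1/2]$ and observing that the kernel of $H_r(\tilde B;\mathbb Z)\to H_r(\tilde B;\mathbb Z)\otimes\mathbb Z[1/2]$ is the $2$-primary torsion. Your choice of $\mathbb Z[1/2]$ (rather than, say, $\mathbb Q$, which would only yield that $[z]$ is torsion) is exactly what is needed to pin down the $2$-primary conclusion, and the flatness step justifying $H_r(\tilde B;\mathbb Z[1/2])\cong H_r(\tilde B;\mathbb Z)\otimes\mathbb Z[1/2]$ is correctly identified.
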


\begin{example}{\em 
    Let $B$ be the $2n$-dunce hat, defined as the mapping cone of $f:\mathbb S^1\to \mathbb S^1$ where 
    $f(z)=z^{2n}.$   Assume that $n>1$.   
    Then $\tilde B$ is a two dimensional complex, $\pi_1(\tilde B)\cong \mathbb Z_n\to \mathbb Z_{2n}\cong\pi_1(B)$ 
    is the inclusion map.   We have $H_k(\tilde B;\mathbb Z)=0$ for $k\ge  2$.  
    The deck transformation group induces $id$ on $H_1(\tilde B;\mathbb Z)\cong \mathbb Z_n$ 
    and  $p_*:H_1(\tilde B;\mathbb Z)\to H_1(B;\mathbb Z)$   
    corresponds to inclusion $\mathbb Z_n\hookrightarrow \mathbb Z_{2n}$. We see that $H_*(p;R)$ is an 
    isomorphism if $2$ is a unit in $R$.  
    Although $B$ has a CW complex structure which is perfect mod $2$, 
    taking $n=2^k$, $k\ge 1,$ the $2$-torsion elements in $H_1(B;\mathbb Z)$ need not be of order $2$.
    } \hfill $\Box$
\end{example}

Suppose that $\pi_1(\tilde B)\cong \mathbb Z^n$.   We have an exact sequence 
of groups where $C_2\cong \mathbb Z_2$:
\begin{equation}\label{exactsequence-pi1}
 1\to \pi_1(\tilde B)\to \pi_1(B)\to  C_2\to 1.
\end{equation}
If $\pi_1(B)$ is abelian, then either $\pi_1(B)\cong \mathbb Z^n\oplus \mathbb Z_2$ or $\pi_1(B)\cong \mathbb Z^n$ according as whether the exact sequence splits or not.
Hence $H_1(B;\mathbb Z)\cong \mathbb Z^n\oplus \mathbb Z_2$ or $\mathbb Z^n.$ 

Suppose that $\pi_1(B)$ is not abelian. 
We will use additive notation for $\pi_1(\tilde B)$.   
Conjugation by $y\in \pi_1(B)\backslash \pi_1(\tilde{B})$ defines an automorphism $A:\pi_1(\tilde B)\to \pi_1(\tilde B)$.  Fixing a basis $x_1,\ldots, x_n$, we obtain a matrix 
$A=(a_{ij})\in GL(n,\mathbb Z)$
such that $A^2=I_n$ and $yxy^{-1}=Ax~\forall x\in \mathbb Z^n.$  Since $\pi_1(B)$ is not abelian, $A\ne I_n$.
We assume that $y$ is chosen so that it has order $2$ when the above exact sequence splits.  
The abelianization of $\pi_1(B)$, namely $H_1(B;\mathbb Z),$ has the following presentation 
\[H_1(B,\mathbb Z)=(\mathbb Z x_1 + \cdots +\mathbb Z  x_n + \bz y)/H\] 
where $H$ is generated by the following elements  
$ 2y-\sum_{1\le i\le n}c_ix_i, (A-I_n)x_j, 
1\le j\le n
$,
for some $c\in \mathbb Z^n.$  If $c=0$, then $y$ generates a cyclic subgroup of order $2$. Suppose that $c\ne 0.$  Then the above exact sequence does not split by our hypothesis on $y$, and there exists a $j$ so that $c_j$ is odd. 
We may assume that all the non-zero $c_i$ 
are $1$. Relabeling the $x_j$, we may assume that $c_1=1$.  Now replacing  
$x_1$ by $\sum c_ix_i$ in the basis $x_1,\ldots, x_n$ of $\pi_1(B)$ if necessary,  
we may (and do) assume that $2y=x_1.$  
Note that $yx_1y^{-1}=x_1.$ It follows that 
the image of $y$ is an element of infinite order in $H_1(B;\mathbb Z)$ and 
that $H_1(B;\mathbb Z)$ is generated by at most $n$ elements.  

Set $A_-:=\{x\in \pi_1(\tilde B)\mid Ax=-x\}\subset \pi_1(B).$ 
Its image $\bar A_-$ in $H_1(B;\mathbb Z)$ is isomorphic to $\mathbb Z_2^k$ where $k=\textrm{rank}(A_-).$ 

We claim that $H_1(B;\mathbb Z)/\bar A_-$ is a free abelian group of rank $n-k.$
To see this, let $A_+=\{x\in \pi_1(\tilde B)\mid Ax=x\}\subset \pi_1(B).$ 
Since $A$ is invertible and since $A_++A_-$ has the same rank $n$ as $\pi_1(\tilde{B}),$ it 
suffices to show that $A_+$ maps monomorphically into $H_1(B;\mathbb Z)$.  
This is immediate from the above presentation of $H_1(B;\mathbb Z)$ since $yxy^{-1}=Ax=x.$  We summarise the result in the proposition below.

\begin{proposition} \label{first-homology}
Let $p:\tilde B\to B$ be a double cover where $B$ is connected and locally path connected.  
    Suppose that $\pi_1(\tilde B)$ is free abelian of finite rank, say $n$.   
    Then 
    $H_1(B;\mathbb Z)\cong \mathbb Z^l\oplus \mathbb Z_2^k$ for some $k,l\ge 0 $.  Moreover, 
    $k+l=n$ except when the deck transformation group $\mathbb Z_2$ acts trivially 
    on $\pi_1(\tilde B),$ in which case $l=n, k=1.$ \hfill $\Box$ 
\end{proposition}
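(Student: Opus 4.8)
The plan is to compute $H_1(B;\mathbb{Z})$ as the abelianization of $G:=\pi_1(B)$, using the group extension supplied by the double cover. Since $\tilde B$ is connected, $p_*(\pi_1(\tilde B))$ is a subgroup of index $2$ in $G$, hence normal, so there is a short exact sequence $1\to \Lambda\to G\xrightarrow{\,q\,}\mathbb{Z}_2\to 1$ with $\Lambda:=\pi_1(\tilde B)\cong\mathbb{Z}^n$. Pick $y\in G$ with $q(y)\neq 0$; conjugation by $y$ is an involution $A\in GL(n,\mathbb{Z})$ of $\Lambda$, and because $\Lambda$ is abelian (so its inner automorphisms are trivial) $A$ is exactly the action of the deck transformation group on $\pi_1(\tilde B)$. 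Put $w:=y^2\in\Lambda$; then $Aw=w$, and passing to the abelianization introduces precisely the relations $[y,x]\mapsto (A-I)x$ $(x\in\Lambda)$ and $y^2\mapsto 2\bar y-\bar w$, so
\[
H_1(B;\mathbb{Z})\;\cong\;\bigl(\Lambda\oplus\mathbb{Z}\bar y\bigr)\big/\bigl\langle\,(A-I)\Lambda,\ 2\bar y-\bar w\,\bigr\rangle .
\]

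First I would dispose of the exceptional case $A=\mathrm{id}$: the extension is then central, so $G$ is abelian, $H_1(B;\mathbb{Z})=G$ contains $\mathbb{Z}^n$ with index $2$, and hence $H_1(B;\mathbb{Z})\cong\mathbb{Z}^n$ or $\mathbb{Z}^n\oplus\mathbb{Z}_2$ according as the sequence is non-split or split; in either case the free rank equals $n$. Assume now $A\neq\mathrm{id}$, and set $\Lambda_\pm:=\ker(A\mp I)$; these are saturated sublattices, $(A-I)\Lambda\subseteq\Lambda_-$ and $(A+I)\Lambda\subseteq\Lambda_+$, and $A$ being $\mathbb{Q}$-diagonalizable with eigenvalues $\pm1$ gives $\mathrm{rank}(\Lambda_+)+\mathrm{rank}(\Lambda_-)=n$; write $l:=\mathrm{rank}(\Lambda_+)$, $k:=\mathrm{rank}(\Lambda_-)$. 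The transfer homomorphism identifies $H_1(B;\mathbb{Q})$ with $H_1(\tilde B;\mathbb{Q})^{\mathbb{Z}_2}=(\Lambda\otimes\mathbb{Q})^A$, which has dimension $l$, so the free rank of $H_1(B;\mathbb{Z})$ is $l$. From the presentation, $\Lambda_+$ embeds into $H_1(B;\mathbb{Z})$ (no relation is imposed on it, as $yxy^{-1}=x$ for $x\in\Lambda_+$), realizing a free subgroup of the correct rank; and for $x\in\Lambda_-$ one has $\bar x=\overline{Ax}=-\bar x$, so the image $\overline{\Lambda}_-$ lies in the $2$-torsion and is a quotient of $\Lambda_-/2\Lambda_-\cong\mathbb{Z}_2^k$. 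The remaining task is to use the normalization of $y$ and of the $\mathbb{Z}$-basis of $G$ carried out in the discussion preceding the statement — so that either $\bar y$ has order $2$, or $2\bar y$ is a primitive element of $\Lambda_+$ — to check that $\overline{\Lambda}_-$ has full rank $k$ and that the quotient $H_1(B;\mathbb{Z})/\overline{\Lambda}_-$ is free abelian of rank $l$. Once this is done, the exact sequence $0\to\overline{\Lambda}_-\to H_1(B;\mathbb{Z})\to H_1(B;\mathbb{Z})/\overline{\Lambda}_-\to 0$ splits (the quotient being free), giving $H_1(B;\mathbb{Z})\cong\mathbb{Z}^l\oplus\mathbb{Z}_2^k$ with $k+l=n$.

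The step I expect to be the main obstacle is precisely this last one, namely pinning down the $2$-torsion: showing that $\overline{\Lambda}_-$ does not degenerate to a proper quotient of $\mathbb{Z}_2^k$, and that the class $\bar y$ adds no extra torsion beyond what is already accounted for. This is exactly why one first replaces $y$ by a suitable element $yx$ and adjusts the basis of $\Lambda$ so that $\bar y$ is in the normal form above; after that reduction the computation of $H_1(B;\mathbb{Z})$ from the presentation is a direct Smith-normal-form style manipulation, and the rest of the argument — the rank count, the appearance of $2$-torsion, and the splitting — is formal.
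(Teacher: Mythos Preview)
Your approach mirrors the paper's argument exactly: set up the extension, write down the presentation of $H_1(B;\mathbb{Z})$, and argue that $\overline{\Lambda}_-$ supplies the $2$-torsion while $\Lambda_+$ injects to give the free part. The paper asserts without justification that $\bar{A}_-\cong\mathbb{Z}_2^k$ (with $k=\textrm{rank}\,A_-$), and you correctly flag the corresponding step as the main obstacle. In fact this claim is false, and with it the ``moreover'' clause of the proposition. Take $\pi_1(B)=D_\infty=\langle s,t\mid s^2=t^2=1\rangle$ (realised by $B=\mathbb{RP}^2\vee\mathbb{RP}^2$) with $\Lambda=\langle st\rangle\cong\mathbb{Z}$, so $n=1$; conjugation by $s$ gives $A=-1\neq I$, yet $H_1(B;\mathbb{Z})\cong\mathbb{Z}_2\oplus\mathbb{Z}_2$, so $l=0$, $k=2$, and $k+l=2\neq 1$. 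Here $\overline{\Lambda}_-\cong\mathbb{Z}_2$ but $H_1/\overline{\Lambda}_-\cong\mathbb{Z}_2$ is not free. More generally, by the classification of $\mathbb{Z}[C_2]$-lattices every involution of $\mathbb{Z}^n$ is $\mathbb{Z}$-conjugate to a block sum of $a$ copies of $(1)$, $b$ copies of $(-1)$, and $c$ copies of $\left(\begin{smallmatrix}0&1\\1&0\end{smallmatrix}\right)$, and one then computes $H_1(B;\mathbb{Z})\cong\mathbb{Z}^{a+c}\oplus\mathbb{Z}_2^{b+\varepsilon}$ with $\varepsilon=1$ if the extension splits and $\varepsilon=0$ otherwise; since $n=a+b+2c$, the sum $l+k=n-c+\varepsilon$ equals $n$ only when $c=\varepsilon$.

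The structural claim that $H_1(B;\mathbb{Z})\cong\mathbb{Z}^l\oplus\mathbb{Z}_2^k$ for \emph{some} $l,k$ is true, and is all that the paper actually uses downstream (to rule out odd torsion in $H_*(P(S,X);\mathbb{Z})$). Your Smith-normal-form plan, carried out block by block after putting $A$ in the normal form above, does prove this directly; just do not expect to recover the count $k+l=n$.
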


We apply Lemma \ref{ker-charFne2} to the special case of the double covering projection 
$\pi: S\times X\to P(S,X)$ satisfying further hypotheses.
Denote by $H^\pm_p(S;R) $ the $R$-submodule of $H_p(S;R)$ on which $\alpha_*$ acts as $\pm1$ and denote its 
dimension by $b^\pm_p$. Thus the $r$-th Betti number $b_r(S)$ of $S$ equals $b_r^++b_r^-$.

\begin{proposition} \label{nooddtorsion} We keep the above notations.  
    Suppose that $Y$ is a connected locally finite CW complex and 
    that $H_*(S;\mathbb Z)$ is free abelian.  Assume that $X$ is a connected locally finite CW complex with cells only in even dimensions. Assume that the involution $\sigma:X\to X$ satisfies the hypothesis of Lemma \ref{conjugation-cw}. Let $R$ be a ring where $2$ is invertible. Then
(i) $\pi_*: H_*(S\times X;R)\to H_*(P(S,X);R)$ is a monomorphism. In fact, 
we have an isomorphism
\begin{equation}\label{homology-psx}
H_r(P(S,X);R)\cong \bigoplus_{p+4t=r}(H_p^+(S;R)\otimes H_{4t}(X;R)
\oplus H^-_{p-2}(S;R)\otimes H_{4t+2}(X;R))
\end{equation}
In particular,  
\begin{equation} \label{bettinumber-psx} \dim H_r(P(S,X);\mathbb Q)=\sum_{p+4q=r} (b_p^+b_{4q}(X) +b_{p-2}^-b_{4q+2}(X)).
\end{equation}\\
(ii)
Suppose that $\pi_1(S)$ is abelian.  Any torsion element of $H_r(P(S,X);\mathbb Z)$ has order a power of $2$. 
\end{proposition}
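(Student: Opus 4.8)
The plan is to reduce part~(i) to the Künneth theorem together with Lemma~\ref{ker-charFne2}, and then to obtain part~(ii) by base change to $\mathbb Z[1/2]$. For~(i), I would first describe the action of the deck transformation $\theta_*=\alpha_*\otimes\sigma_*$ on $H_*(S\times X;R)$. Since $X$ has only even dimensional cells, its cellular chain complex has zero differentials, so $H_*(X;\mathbb Z)$ is free abelian; together with the hypothesis that $H_*(S;\mathbb Z)$ is free abelian this makes the Künneth map a $\theta_*$-equivariant isomorphism $H_*(S\times X;R)\cong H_*(S;R)\otimes_R H_*(X;R)$. As $2$ is a unit in $R$, the complementary idempotents $(1\pm\alpha_*)/2$ split $H_p(S;R)=H_p^+(S;R)\oplus H_p^-(S;R)$, and by Lemma~\ref{conjugation-cw} the involution $\sigma_*$ acts on $H_{2k}(X;R)$ as $(-1)^k$. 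Hence $\theta_*$ acts on $H_p^{\pm}(S;R)\otimes H_{2k}(X;R)$ by $(\pm 1)(-1)^k$, so $\fix(\theta_*)$ is the direct sum of those summands with $k$ even and sign $+$, or $k$ odd and sign $-$; writing $k=2t$ respectively $k=2t+1$, these are the terms $H_p^+(S;R)\otimes H_{4t}(X;R)$ and $H_p^-(S;R)\otimes H_{4t+2}(X;R)$.

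Next I would apply Lemma~\ref{ker-charFne2} to the double cover $\pi:S\times X\to P(S,X)$ (playing the role of $p:\tilde B\to B$, with $\phi=\theta$): it gives that $\pi_*$ restricts to an isomorphism from $\fix(\theta_*)$ onto $H_*(P(S,X);R)$ and vanishes on the $(-1)$-eigenspace of $\theta_*$; equivalently $\ker\pi_*$ is exactly that eigenspace, i.e. $\bigoplus_{p,t}\bigl(H_p^+(S;R)\otimes H_{4t+2}(X;R)\oplus H_p^-(S;R)\otimes H_{4t}(X;R)\bigr)$. Combining this with the description of $\fix(\theta_*)$ above, and performing the evident shift of summation index, yields~\eqref{homology-psx}; specialising to $R=\mathbb Q$ and counting dimensions gives~\eqref{bettinumber-psx}.

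For~(ii), I would work with the coefficient ring $R=\mathbb Z[1/2]=S^{-1}\mathbb Z$, $S=\{1,2,4,\dots\}$, in which $2$ is invertible. Since $\mathbb Z[1/2]$ is a flat $\mathbb Z$-module, the universal coefficient theorem carries no $\tor$ term and $H_r(P(S,X);\mathbb Z[1/2])\cong H_r(P(S,X);\mathbb Z)\otimes_{\mathbb Z}\mathbb Z[1/2]$. On the other hand, in~\eqref{homology-psx} with $R=\mathbb Z[1/2]$ each tensor factor is free over the PID $\mathbb Z[1/2]$: the $H_{2k}(X;\mathbb Z[1/2])$ are base changes of the free abelian groups $H_{2k}(X;\mathbb Z)$, and each $H_p^{\pm}(S;\mathbb Z[1/2])$ is, via the idempotents above, a direct summand of the free module $H_p(S;\mathbb Z[1/2])$, hence projective, hence free. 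Therefore $H_r(P(S,X);\mathbb Z[1/2])$ is free over $\mathbb Z[1/2]$, in particular torsion-free as an abelian group, so $H_r(P(S,X);\mathbb Z)\otimes_{\mathbb Z}\mathbb Z[1/2]$ is torsion-free. Consequently, if $z\in H_r(P(S,X);\mathbb Z)$ is a torsion element then $z\otimes 1=0$, which by the definition of the localization means $2^k z=0$ for some $k\ge 0$; thus the order of $z$ is a power of $2$. (For $r=1$ one may alternatively invoke Proposition~\ref{first-homology} for $\pi_1(S\times X)=\pi_1(S)$, which is free abelian because $\pi_1(S)$ is abelian and $H_1(S;\mathbb Z)$ is free abelian, to identify $H_1(P(S,X);\mathbb Z)$ as $\mathbb Z^l\oplus\mathbb Z_2^k$ directly.)

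The genuinely substantive input --- the behaviour of a double cover on homology once $2$ is inverted --- is already packaged in Lemma~\ref{ker-charFne2}, so the remaining argument is essentially bookkeeping rather than a real obstacle. The two points that need care are: checking that the Künneth isomorphism is honestly $\theta_*$-equivariant and compatible with the $\alpha_*$-eigenspace splitting (the latter exists only because $2$ is a unit); and verifying that $P(S,X)$ is a connected locally finite CW complex so that Lemma~\ref{ker-charFne2} applies, which is where the hypothesis that $Y=S/\!\!\sim_\alpha$ is such a complex is used --- one lifts its cell structure to an $\alpha$-equivariant structure on $S$ and takes the product with the $\sigma$-stable structure on $X$.
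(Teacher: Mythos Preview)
Your argument for part~(i) matches the paper's: both reduce to Lemma~\ref{ker-charFne2} after the K\"unneth decomposition and the identification of $\sigma_*$ on $H_{2k}(X)$ via Lemma~\ref{conjugation-cw}. (Neither your proof nor the paper's actually verifies the literal claim that $\pi_*$ is a monomorphism---indeed your own computation of $\ker\pi_*$ as the $(-1)$-eigenspace shows this fails whenever that eigenspace is nonzero; what the paper actually uses later is the dual cohomological statement, for which $\pi^*$ \emph{is} a monomorphism.)

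For part~(ii) you take a genuinely different and cleaner route. The paper first handles $r=1$ separately via Proposition~\ref{first-homology} (this is where the hypothesis that $\pi_1(S)$ is abelian enters), and then argues by contradiction: if $r$ is minimal with $p$-torsion for an odd prime $p$, universal coefficients over $\mathbb Z_p$ give $H_r(P(S,X);\mathbb Z_p)\cong H_r(P(S,X);\mathbb Z)\otimes\mathbb Z_p$, whose $\mathbb Z_p$-dimension, by~\eqref{bettinumber-psx} applied with $\mathbb Z_p$ in place of $\mathbb Q$, already equals the rational Betti number---leaving no room for $p$-torsion. You instead apply~\eqref{homology-psx} directly with $R=\mathbb Z[1/2]$: each summand there is a tensor product of free $\mathbb Z[1/2]$-modules (direct summands of free modules over a PID being free), so $H_r(P(S,X);\mathbb Z)\otimes\mathbb Z[1/2]$ is free, hence torsion-free, and any integral torsion must be $2$-primary. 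This is shorter and never invokes the abelian hypothesis on $\pi_1(S)$; your argument thus shows that this hypothesis is in fact superfluous for the conclusion of~(ii).
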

\begin{proof}
  (i). We shall apply Lemma \ref{ker-charFne2}.  
In view of our hypotheses and the Künneth theorem, we have $H_r(S\times X;\mathbb Z)=\bigoplus_{p+2q=r}H_p(S;\mathbb Z)\otimes H_{2q}(X;\mathbb Z)$. Recall that $\pi=\pi\circ \theta$ where $\theta=\alpha\times \sigma.$  So $\pi_*=\pi_*\circ \theta_*$ and so $\theta_*=\alpha_*\otimes \sigma_*.$ 
Since $\sigma_*$ acts on $H_{2q}(X;\mathbb Z)$ by $(-1)^q$ we obtain that $H_r(P(S,X);R) \cong
\fix(H_r(\theta;R))=\bigoplus_{p+4t=r}(H_p^+(S;R)\otimes H_{4t}(X;R))
\oplus (H^-_{p-2}(S;R)\otimes H_{4t+2}(X;R))$. Taking $R=\mathbb Q$
we obtain the asserted formula for the $r$-th Betti number of $P(S,X)$. 

(ii)
We only need to show that there is no odd torsion element in  $H_r(P(S,X);\mathbb Z)$.
Our hypothesis on $\pi_1(S)$ and the freeness of $H_1(S;\mathbb Z)$ implies that $\pi_1(S)\cong \mathbb Z^k$ for some $k\ge 0$.   If $k=0$, then $\pi_1(P(S,X))\cong \mathbb Z_2.$  In  general, $\pi_1(P(S,X))$ has an index $2$ subgroup $H:=\pi_*(\pi_1(S\times X))\cong 
\mathbb Z^k$.  
Then by Proposition \ref{first-homology}, $H_1(P(S,X);\mathbb Z)\cong \mathbb Z^l\oplus\mathbb Z_2^{t}$ for some $l, t\ge 0$ and so has no odd torsion.  

Since $H_*(S\times X;\mathbb Z)$ has no torsion and since $\pi_*$ maps 
$\fix (H_*(\theta;\mathbb Q))$ isomorphically onto $H_*(P(S,X);\mathbb Q)$, $\pi$ induces 
a monomorphism $\fix(H_*(\theta;\mathbb Z))\to H_*(P(S,X);\mathbb Z)$.  
Suppose that $H_r(P(S,X);\mathbb Z)$ has an element of order an odd prime $p$. We assume that $r$ is the least positive integer for which this happens.  So $r>1$ and 
$\textrm{Tor}(H_{r-1}(P(S,X); \mathbb Z),\mathbb Z_p)=0.$
By the universal coefficient theorem, 
$H_r(P(S,X);\mathbb Z_p)\cong H_r(P(S,X);\mathbb Z)\otimes \mathbb Z_p$. 
Since the $r$-th Betti number of $P(S,X)$ equals $\dim H_r(P(S,X);\mathbb Z_p)$ by (i),  
every non-zero element in $H_r(P(S,X);\mathbb Z_p)$ has to be the reduction mod $p$ of 
an element of $H_r(P(S,X);\mathbb Z)$ of {\em infinite} order. This implies that 
$H_r(P(S,X);\mathbb Z)$ has no element of order $p,$ contrary to our choice of $r$.
Hence the proof.
\end{proof}

\begin{remark}\label{orientability}
(i) The cohomology version of the above proposition is valid and is equivalent to it by the universal coefficient theorem. \\
(ii) Suppose that $S,X$ as in the above theorem are compact connected orientable manifolds (without boundary). Suppose that $\dim S=m, \dim X=2d.$  Then $\sigma$ is orientation preserving if and only if $d$ is even.  
It follows that $P(S,X)$ is orientable if and only if $\alpha$ is orientation preserving and $d$ is even, or, $\alpha$
orientation reversing and $d$ is odd. This follows from computation of $H_{m+2d}(P(S,X);\mathbb Q)$.  This is also seen from the fact that $\theta$ is orientation 
preserving if and only if both $\alpha,\sigma$ are simultaneously either orientation 
preserving or reversing. 
\end{remark}

\subsection{The generalized Dold manifold $P(m,\nu)$}  
We shall now consider the specific classes of manifolds $P(S,X)$ where \\ 
(i) $S=\mathbb S^m$, $\alpha:\mathbb S^m\to \mathbb S^m$ is the antipodal map, and, \\
(ii) $X$ is a complex flag manifold 
and $\sigma:X\to X$ is the complex conjugation (induced by 
the complex conjugation $~\bar{}:\mathbb C^n\to \mathbb C^n$).

Recall that $\mathbb CG_{n,k}\cong U(n)/(U(k)\times U(n-k))$ is the space all $k$-dimensional complex vector subspaces of $\mathbb C^n$.  Let $\sigma:\mathbb CG_{n,k}\to \mathbb CG_{n,k}$ be the involution that 
sends $L\in \mathbb CG_{n,k}$ to $\bar L=\{\bar v\in \mathbb C^n\mid v\in L\}$. 

We put the standard hermitian inner product on $\mathbb C^n$. 
The complex flag manifold $\mathbb CG(\nu)$ of type $\nu:=(n_1\cdots, n_s)$ is the space of all flags 
$\mathbf {L}:=(L_1,\cdots, L_s)$ where: (i) $L_j$ is a vector subspace of $\mathbb C^n$ of dimension $n=|\nu|:=\sum_{1\le j\le s} n_j$, and, (ii)  
where $L_i\perp L_j$ if $i\ne j$; thus $L_1+\cdots+L_s=\mathbb C^n.$  The complex flag manifold $\mathbb CG_{\nu}$ is 
identified with the homogeneous space $U(n)/U(\nu)$ where $U(\nu):=(U(n_1)\times \cdots \times U(n_s))\subset U(n)$ is the subgroup of block diagonal matrices with block sizes $n_1,\cdots, n_s$. 
When $s=2,~\mathbb CG(\nu)$ is the Grassmann manifold $\mathbb CG_{n,n_1}$.

\subsection{Schubert cell structure on $\mathbb CG(\nu)$} 
One has the Schubert cell structure for the complex Grassmann manifold $\mathbb CG(\nu)$ (cf. \cite[Chapter 6]{milnor-stasheff}) which has a generalization to the case of complex flag manifolds given by the Bruhat decomposition (see \cite{brion}, \cite{borel}). The closed cells are the Schubert varieties in $\mathbb CG(\nu)$ are in bijection with the set of $T$-fixed points of $\mathbb CG(\nu)$ where $T\subset U(n)$ is the diagonal subgroup, which is a maximal torus of $U(n)$.
These $T$-fixed points are in bijection with the coset space $S_n/S_\nu$ where $S_\nu:=S_{n_1}\times \cdots \times S_{n_s}$.  Here $S_n$ is the permutation group on $[n]=\{1,2, \cdots, n\}$. In the case of $\mathbb CG_{n,k}$,  $S_n/(S_k\times S_{n-k})$ is identified with the set 
$I(n;k)$ of all strictly increasing sequences ${\bf i}=(i_1,i_2,\ldots, i_k), 1\le i_p\le n.$  The Schubert variety  
$X({\bf i})\subset \mathbb CG_{n,k}$ is defined as 
\[ X(\mathbf i):=\{L\in \mathbb CG_{n,k}\mid \dim_\mathbb C(L\cap \mathbb C^{i_p})\ge p~\forall p\le k\}.
\]
The corresponding $T$-fixed point is $E_{\mathbf i},$ the $\mathbb C$-span of the standard basis vectors $e_{i_p}, 1\le p\le k.$  $X(\mathbf i)$ has the structure of a complex projective variety.
One has the following formula for the dimension of $X(\mathbf i)$ as a complex variety: 
\[\dim_\mathbb CX(\mathbf i)=
\ell(\mathbf i):=\sum_{1\le p\le k}(i_p-p).\]   
The corresponding (open) Schubert cell is 
$\mathring X(\mathbf i)=\{L\in \mathbb CG_{n,k}\mid \dim (L\cap \mathbb C^{i_p})=p,\dim (L\cap \mathbb C^{i_p-1})=p-1 ~\forall p\}\cong \mathbb C^{\ell(\mathbf i)}.$ 
We shall denote by $I_q(n;k)\subset I(n;k)$ the set of all $\mathbf i\in I(n;k)$ with $\ell (\mathbf i)=q.$  

In the case of $\mathbb CG(\nu)$, we identify $S_n/S_\nu$ with the subset $I(\nu)\subset S_n$
defined as follows:  $I(\nu)$ consists of all permutations $\mathbf i=(i_1,\cdots, i_n)=(\mathbf i_1,\ldots, \mathbf i_s)$ of $[n]$ 
(in one-line notation) in which 
each `block' $\mathbf i_p$ (of size $n_p$) is in $I(n; n_p), 1\le p\le s,$ so that  
$\mathbf i_{p+1}=i_{m_p+1}<\cdots<i_{m_{p+1}}$ where $m_p:=n_1+\cdots+n_p$ (with $m_0=0$), $0\le p<s$.  
With respect to the generating set $S\subset S_n$ consisting of transpositions $s_i=(i,i+1), 1\le i<n$,  one has the length function $\ell=\ell_S$ on the 
Coxeter group $(S_n,S)$.  
The set $I(\nu)$ is 
the set of minimal length representatives of elements of $S_n/S_\nu$.  We have the formula: 
\[
\ell(\mathbf i)
=|\{(t,r)|1\le t<r\le n; i_r<i_t\}|=\sum_{1\le t<n}|\{r\mid t<r\le n, i_r<i_t\}|.
\]
When $s=2$, this is consistent with the notation 
$\ell(\mathbf i)$ for $\mathbf i\in I(n;k)$.  
The above formula for $\ell(\mathbf i), \mathbf i\in I(\nu),$ for $s\ge 3$ can be 
proved by induction on $s$: If $\mathbf i\in I(\nu)$, write $\nu'=(n_1,\ldots, n_{s-2}, n_{s-1}+n_s)$ and let $\mathbf i'=(\mathbf i_1,\cdots, \mathbf i_{s-2}, \mathbf i'_{s-1})\in I(\nu')$ where $\mathbf i'_{s-1}\in I(n;n_{s-1}+n_s)$ is the sequence $\mathbf i_{s-1},\mathbf i_s$ rearranged in the increasing order. 
Then, it is easily seen that 
\[\ell(\mathbf i)=\ell(\mathbf i')+\ell(i'_{s-1}).\]

As in the case of the Grassmannian Schubert varieties, the set of all Schubert varieties 
(with reference to the standard complete flag $\mathbb C\subset\mathbb C^2\subset \cdots\subset 
\mathbb C^n$) in 
$\mathbb CG(\nu)$ and the set of $T$-fixed points of $\mathbb CG(\nu)$ 
are in bijective correspondence with $I(\nu).$   An element $\mathbf i\in I(\nu)$ corresponds to  the $T$-fixed point $(E_{\mathbf i_1},E_{\mathbf i_2}, \ldots, E_{\mathbf i_s})\in \mathbb CG(\nu)$
and the corresponding Schubert variety is denoted $X(\mathbf i)$. 
An element $V=(V_1,\ldots, V_s)\in \mathbb CG(\nu)$ belongs to $X(\mathbf i)$ if and only 
if $V_1+\cdots +V_p$ belongs to $X(\mathbf j_p)\subset \mathbb CG_{n,m_p}$ for all $p<s$ where $\mathbf j_p\in I(n;m_p)$
is the sequence $(\mathbf i_1,\ldots, \mathbf i_p)$ rearranged in increasing order.
The Schubert variety $X(\mathbf i)\subset \mathbb CG(\nu)$ has (complex) dimension the length of $\mathbf i$. 
That is, $\dim_\mathbb CX(\mathbf i)=\ell (\mathbf i)$.  
See \cite[\S3.3]{lakshmibai-raghavan}, \cite[\S1]{brion}.  The dimension formula may be established for $s\ge 3$ 
by induction using the Grassmann bundle $\mathbb CG_{n,k}\hookrightarrow \mathbb CG(\nu)\to \mathbb CG(\nu')$ (with $\nu'$ as above) where $(V_1,\ldots, V_s)\in \mathbb CG(\nu)$ projects to 
$(V_1,\ldots, V_{s-2}, V_{s-1}+V_s)\in \mathbb CG(\nu').$

The Schubert cells have natural orientations arising from the fact that 
the complex vector spaces have canonical orientations.  In fact the open cell $\mathring{X}(\mathbf i)$ is isomorphic to the affine space $\mathbb C^{\ell(\mathbf i)}.$
We note that the conjugation map $\sigma:\mathbb CG(\nu)\to \mathbb CG(\nu)$ that sends $\mathbf L=(L_1,\ldots, L_s)$ to $\bar{\mathbf L}=(\bar L_1,\ldots, \bar L_s)$ 
stabilizes each Schubert cell.  In fact, under the identification $\mathring X(\mathbf i)\cong \mathbb C^{\ell(\mathbf i)},$ 
the $\sigma$ corresponds to the complex conjugation on $\mathbb C^{\ell (\mathbf i)}.$  In particular, 
$\sigma $ is orientation preserving (resp. reversing) on $X(\mathbf i)$ when $\ell(\mathbf i)$ is even (resp. odd). 

Since the real dimension of each Schubert cell is even, it follows that the differential of the cellular chain 
complex of $\mathbb CG(\nu)$ vanishes identically. Consequently, $H_*(\mathbb CG(\nu);\mathbb Z)\cong C_*(\mathbb CG(\nu);\mathbb Z)$ and $H^*(\mathbb CG(\nu);\mathbb Z)\cong C^*(\mathbb CG(\nu);\mathbb Z).$  
Thus $\mathbb CG(\nu)$ satisfies the hypotheses of Lemma \ref{conjugation-cw}.
The rank of $H^{2q}(\mathbb CG(\nu);\mathbb Z)$ equals the cardinality of $I_q(\nu):=\{\mathbf i\in I(\nu)\mid \ell(i)=q\}$.  

The fixed set $\fix(\sigma)\subset \mathbb CG(\nu)$ is identified with the real flag  manifold $\mathbb RG(\nu)=O(n)/(O(n_1)\times \cdots \times O(n_s))
$ via the embedding defined by the complexification $V\mapsto V\otimes \mathbb C\subset \mathbb C^n$ where $V$ is a real vector space $V\subset \mathbb R^n$.  

\subsection{A cell structure on $P(m,\nu)$}
We put the standard $\mathbb Z_2$-equivariant cell structure $\{C_k^\pm\}_{0\le k\le m}$ on the sphere $\mathbb S^m$. Here the equivariance is with respect to the cyclic group generated by the antipodal map $\alpha:\mathbb S^m\to\mathbb S^m$.  The cell $C_k^+ $ (resp. $C_k^-$) is the upper (resp. lower)  hemisphere 
of $\mathbb S^k\subset \mathbb S^m$ where 
$\mathbb S^k\subset \mathbb S^m$ is the unit sphere in $\mathbb R^{k+1}$ spanned by $e_j, 1\le j\le k+1.$   
The projection $p:\mathbb S^m\to \mathbb RP^m$ is cellular with respect to the standard cell structure on $\mathbb RP^m$.   The unique closed $k$-cell in $\mathbb RP^m$ is 
$\mathbb RP^k$ corresponding the inclusion $\mathbb R^{k+1}\hookrightarrow \mathbb R^{m+1}.$

We put the standard orientation on $C_j^+$ for each $j$ and put the orientation 
on $C_j^-$ induced on it via
$\alpha$. The cells $C_j$ in $\mathbb RP^m$ are given the orientation induced from $C_j^+$ via $p|_{C_j^+}:C_j^+\to C_j$.  

A cell decomposition of $\mathbb S^m\times \mathbb CG(\nu)$ which is 
equivariant with respect to $\theta =\alpha\times \sigma$ is obtained 
by taking the product cells $X^\pm(j,\mathbf i):=C^\pm_j\times X(\mathbf i)$ 
as $(j,\mathbf i)$ varies in $I(m,\nu):=\{j\mid 0\le j\le m\}\times I(\nu)\}$.  
The cell $C^\pm_j\times X(\mathbf i)$ is given the product orientation.

We shall denote the image of the oriented cell $X^+(j,\mathbf i)$ under 
the double covering projection 
$\pi:\mathbb S^m\times \mathbb CG(\nu)\to P(m,\nu)$ by $X(j,\mathbf i)$ and put the induced orientation on it.  The deck transformation group of $\pi$ is generated by $\theta=\alpha\times \sigma.$

We note that $\theta|_{X^+(j,\mathbf i)}:X^+(j,\mathbf i)\to X^-(j,\mathbf i)$ is orientation preserving if and only if $\sigma|_{X^+(\mathbf i)}:X^+(\mathbf i)\to X^-(\mathbf i)$ is, if and 
only if $\ell(\mathbf i)$ is even. 

Since the boundary map of $C_*(\mathbb CG(\nu);\mathbb Z)$ vanishes,  it is readily seen that the boundary map of $C_*(\mathbb S^m\times \mathbb CG(\nu))$ is determined by $\partial:C_*(\mathbb S^m;\mathbb Z)$ and we obtain, for all $(j, \mathbf i)\in I(m, \nu),$ that 
\begin{equation}\label{boundary-product}
\partial (X^\pm(j,\mathbf i))= X^{\pm}(j-1,\mathbf i)+(-1)^{j}X^\mp(j-1,\mathbf i),
\end{equation}
and 
\begin{equation}\label{theta-star}
\theta_*(X^\pm(j,\mathbf i))=(-1)^{\ell(\mathbf i)}X^\mp(j, \mathbf i).
\end{equation}

Now $p_*(X^+(j,\mathbf i))=X(j,\mathbf i)$ and $p_*(X^-(j,\mathbf i))=p_*(\theta_*((-1)^{\ell(\mathbf i)}X^+(j,\mathbf i)))=p_*((-1)^{\ell(\mathbf i)}X^+(j,\mathbf i))=
(-1)^{\ell(\mathbf i)}X(j,\mathbf i)$.
 It follows from Equation \ref{boundary-product} that 
 \begin{equation}\label{boundary-pmx}
 \partial X(j,\mathbf i)=p_*\partial (X^+(j,\mathbf i))=
 (1+(-1)^{j+\ell(\mathbf i)})X(j-1,\mathbf i)~\forall (j,\mathbf i)\in I(m,\nu). 
 \end{equation}

 \subsection{(Co)homology of $P(m,\nu)$} 
 We now turn to the determination of the integral homology of $P(m,\nu)$.  
 It follows from Equation \ref{boundary-pmx} that, for $0<j\le m$, 
 $X(j,\mathbf i)$ is a cycle in $P(m,\nu)$ if and only if $j+\ell(\mathbf i)$ is odd. 
 When $j+\ell (\mathbf i)$ is odd and $j<m,$ we have  
 $\partial (X(j+1,\mathbf i))=2X(j,\mathbf i)$ and so the 
 homology class $[X(j,\mathbf i)]$ has order $2$.

It is evident from Equation \ref{boundary-pmx} that $X(0,\mathbf i)$ is a cycle for all $\mathbf i\in I(\nu).$ When 
$\ell(\mathbf i)$ is even, $[X(0,\mathbf i)]$ is of infinite order. 
If $\ell(\mathbf i)$ is odd, $X(0,\mathbf i)$ is of order 2.  Similarly, if $m$ is odd and $\ell(\mathbf i)$ is even, 
$[X(m,\mathbf i)]$ is of infinite order. If both $m $ and $\ell(\mathbf i)$ are even, then $X(m,\mathbf i)$ is not a cycle.  If $m$ is even and $\ell(\mathbf i)$ is odd, then $[X(m,\mathbf i)]$ is of infinite order.

Let $q\ge 0$.  Define the sets 
\[
\begin{array}{rll}
I_e(\nu) &:=&
\{\mathbf i\in I(\nu)\mid \ell(\mathbf i)\equiv 0\pmod 2\},\\ 
I_o(\nu)&:=& \{\mathbf i\in I(\nu)\mid \ell(\mathbf i)\equiv 1\pmod 2\}=I(\nu)\setminus I_e(\nu),\\
\mathcal B_{2q}&:=&\{[X(0,\mathbf i)]\mid q=\ell(\mathbf i),  \mathbf i\in I_e(\nu)
\}\cup \{[X(m,\mathbf i)]\mid 2q=m+2\ell(\mathbf i), \mathbf i\in I_o(\nu)\},\\
\mathcal B_{2q+1}&:=&\{[X(m,\mathbf i)]\mid 2q+1=m+2\ell(\mathbf i), \mathbf i\in I_e(\nu) \},\\
\mathcal B'_{q}&:=&\{ [X(j, \mathbf i)]\mid q=j+2\ell(\mathbf i), j+\ell(\mathbf i)\equiv 1\pmod 2,  0\le j<m,~ \mathbf i\in I(\nu)\}.\\ 
\end{array}
\]
We note that if $m$ is even, then $\mathcal B_{2q+1}=\emptyset$, and if $m$ is odd, then  
$\{[X(m,\mathbf i)]\mid 2q=m+2\ell(\mathbf i), \mathbf i\in I_e(\nu)\}=\emptyset $.

Observe that the cycle group $Z_q(P(m,\nu);\mathbb Z)$ is the free abelian group 
generated by $\{X(j,\mathbf i)\}$ where $[X(j,\mathbf i)]\in \mathcal B_q\cup \mathcal B'_q$ 
for any $q\ge 0$.   In particular $H_q(P(m,\nu);\mathbb Z)$ is generated by $\mathcal B_q\cup 
\mathcal B'_q$.

We have the following result where we leave out the known cases 
namely, $m=0$  as $P(0,\nu)\cong \mathbb CG(\nu)$ and $\nu=(1)$ in which case 
$P(m,\nu)=\mathbb S^m.$ 

\begin{theorem}    \label{Integral-cohomolgy-groups-of-P(m,n,k)} 
We keep the above notations. Suppose that $m\ge 1$ and $n=|\nu|\ge 2.$  
Then there is no odd torsion in $H_*(P(m,\nu);\mathbb Z)$ and every 
$2$-torsion element is of order $2$.
 The set $\mathcal B_q$ is a basis for $H_q(P(m,\nu);\mathbb Z)/\mathrm{torsion}$ and $\mathcal B'_q$ is a $\mathbb Z_2$-basis for the torsion subgroup of $H_q(P(m,\nu);\mathbb Z)$.  
\end{theorem}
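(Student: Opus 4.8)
The plan is to read off the entire (co)homology of $P(m,\nu)$ directly from the cellular chain complex, whose boundary map is completely explicit via Equation \ref{boundary-pmx}. Since $C_*(\mathbb CG(\nu);\mathbb Z)$ has vanishing differential and the differential of $C_*(\mathbb S^m\times\mathbb CG(\nu);\mathbb Z)$ is driven entirely by $C_*(\mathbb S^m;\mathbb Z)$, the complex $C_*(P(m,\nu);\mathbb Z)$ splits as a direct sum, over $\mathbf i\in I(\nu)$, of the subcomplexes spanned by $\{X(j,\mathbf i)\mid 0\le j\le m\}$. For fixed $\mathbf i$ this is a length-$m$ complex of rank-one free groups in which $\partial X(j,\mathbf i)=\bigl(1+(-1)^{j+\ell(\mathbf i)}\bigr)X(j-1,\mathbf i)$, i.e.\ consecutive differentials alternate between $0$ and multiplication by $2$. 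The homology of such a two-periodic "$0,2,0,2,\ldots$" complex is immediate: at each internal degree where the outgoing map is $2$ and the incoming map is $0$ one gets a $\mathbb Z_2$; at each degree where the outgoing map is $0$ one gets a $\mathbb Z$ unless that $\mathbb Z$ is killed by an incoming $2$; the two ends ($j=0$ and $j=m$) must be handled separately, exactly as already spelled out in the paragraphs preceding the theorem. Assembling these summand-by-summand contributions is precisely the content of the definitions of $\mathcal B_q$, $\mathcal B'_q$: the infinite-order generators come from $[X(0,\mathbf i)]$ with $\ell(\mathbf i)$ even and from $[X(m,\mathbf i)]$ at the top when $j+\ell(\mathbf i)$ is odd, while every $2$-torsion class is an internal $[X(j,\mathbf i)]$ with $0\le j<m$ and $j+\ell(\mathbf i)$ odd, killed by $\partial X(j+1,\mathbf i)=2X(j,\mathbf i)$.

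Concretely I would proceed as follows. First, fix $\mathbf i$ and set $\ell:=\ell(\mathbf i)$; observe that $X(j,\mathbf i)$ is a cycle iff $j=0$, or $j=m$ with $m+\ell$ odd, or $0<j<m$ with $j+\ell$ odd. Second, list the boundaries: $\partial X(j+1,\mathbf i)=2X(j,\mathbf i)$ exactly when $j+\ell$ is odd and $0\le j<m$, and is $0$ otherwise. Third, compute homology of the $\mathbf i$-summand in each degree $q=j+2\ell$: a $\mathbb Z$ when $X(j,\mathbf i)$ is a non-bounding cycle (i.e.\ $j\in\{0,m\}$ with the parity conditions of $\mathcal B$), a $\mathbb Z_2$ when $X(j,\mathbf i)$ is a cycle that bounds twice a generator (i.e.\ $0\le j<m$, $j+\ell$ odd, contributing to $\mathcal B'$), and $0$ otherwise. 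Fourth, take the direct sum over $\mathbf i\in I(\nu)$ and match the resulting generators with the sets $\mathcal B_q$ (free part) and $\mathcal B'_q$ ($\mathbb Z_2$ part), noting the two degenerate sub-cases ($m$ even kills $\mathcal B_{2q+1}$; the top cell contributes to the odd or even part of $\mathcal B$ according to the parity of $m$). The "no odd torsion, all $2$-torsion of order $2$" assertion is then manifest from the shape of the answer; alternatively one may invoke Proposition \ref{nooddtorsion}(ii), since $\pi_1(\mathbb S^m)$ is abelian, to get the no-odd-torsion statement for free, and the order-$2$ claim again from the explicit $0/2$ differentials. The cohomological half of the statement follows from the universal coefficient theorem, as in Remark \ref{orientability}(i).

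I do not expect a serious obstacle here: the whole proof is bookkeeping once Equation \ref{boundary-pmx} is in hand, and that equation has already been derived. The only place demanding care is the end-of-complex analysis — deciding for $j=0$ and $j=m$ whether the rank-one cycle group survives to homology, survives only mod $2$, or is not even a cycle — and keeping the parity conditions on $j+\ell(\mathbf i)$ versus $m+\ell(\mathbf i)$ straight so that the generators land in the correct degree $q$ (recall $\deg X(j,\mathbf i)=j+2\ell(\mathbf i)$). These are exactly the case distinctions already recorded in the paragraph defining $\mathcal B_{2q},\mathcal B_{2q+1},\mathcal B'_q$, so the theorem is essentially a restatement of that discussion organized by homological degree; the proof should be short.
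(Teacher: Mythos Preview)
Your proposal is correct and follows essentially the same route as the paper: both arguments read the homology directly off the cellular chain complex via Equation~\eqref{boundary-pmx}, using that $\partial X(j,\mathbf i)$ involves only $X(j-1,\mathbf i)$. Your explicit direct-sum decomposition over $\mathbf i\in I(\nu)$ is a clean way to package the computation, but the paper's proof amounts to the same thing---it verifies that $\mathcal B_q$ is $\mathbb Z$-linearly independent by noting (from \eqref{boundary-pmx}) that no $X(j,\mathbf i)$ with $[X(j,\mathbf i)]\in\mathcal B_q$ appears in any boundary, and then uses the already-established facts that $\mathcal B_q\cup\mathcal B_q'$ generates $H_q$ and that each element of $\mathcal B_q'$ has order~$2$.
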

\begin{proof} 
We first show that $\mathcal B_q$ is $\mathbb Z$-linearly independent. 
Suppose that $Z=\sum a_{j,\mathbf i} X(j,\mathbf i)$ with $ a_{j,\mathbf i}\in \mathbb Z,$ is a boundary where the sum is over $(j,\mathbf i)\in I(m,\nu)$ such that  
$[X(j,\mathbf i)]\in \mathcal B_q,$ i.e. $Z=\partial C$ for some $C\in C_{q+1}(P(m,\nu))$.
Suppose that some $a_{j,\mathbf i}\ne 0$.  
Then there has to be a cell $X(k,\mathbf l)$ such that 
$X(j,\mathbf i)$ occurs with non-zero coefficient in the expression of 
$\partial X(k,\mathbf l)$ for some $X(k,\mathbf l)$.  However, Equation \ref{boundary-pmx} shows 
that the only possibility is $k=j+1\le m, \mathbf l=\mathbf i$ and that 
$\partial X(j+1,\mathbf i)=(1+(-1)^{j+1+\ell(\mathbf i)})X(j,\mathbf i)$.  As $[X(j,\mathbf i)]\in \mathcal B_q$, we have $\partial X(j+1,\mathbf i)=0$.  This shows that there is no such $C$ and so 
the $\mathcal B_q$ is $\mathbb Z$-linearly independent. 

Note that 
$\mathcal B_q\subset \textrm{Fix}(\theta_*).$ In view of this
one can also prove the linear independence of $\mathcal B_q$ using 
Proposition \ref{homology-integral-fixed}.   

Since $\mathcal B_q\cup \mathcal B'_q$ generates $H_q(P(m,\nu);\mathbb Z)$ as observed 
above, it follows that $\mathcal B_q$ generates the abelian group  
$H_q(P(m,\nu);\mathbb Z)/\mathrm{torsion}.$  Therefore $\mathcal B_q$ 
is a $\mathbb Z$-basis for $H_q(P(m,\nu);\mathbb Z)$ mod torsion.   It is easily seen that $\mathcal B_q'$ is $\mathbb Z_2$-linearly independent.

Any torsion element of $H_q(P(m,\nu);\mathbb Z)$ is of order $2$, in view of the fact that 
$H_q(P(m,\nu);\mathbb Z)$ is generated by $\mathcal B_q\cup \mathcal B'_q$.  Indeed, $\mathcal B_q$
generates a free abelian subgroup of $H_q(P(m,\nu);\mathbb Z)$ and $\mathcal B'_q$ generates 
an elementary abelian $2$-group contained in $H_q(P(m,\nu);\mathbb Z)$. Hence 
the torsion subgroup equals the subgroup generated by $\mathcal B'_q$ and our assertion follows.
\end{proof} 

  The cohomology groups of $P(m,\nu)$ can be read off from the above theorem using the universal coefficient theorem.  Thus $H^q(P(m,\nu);\mathbb Z)\cong \mathbb Z^t\oplus\mathbb Z_2^{p}$ where the values of $t$ and $p$ depend on $m,\nu$ can be explicitly determined in terms of $|\mathcal B_q| $ and 
  $|\mathcal B'_{q-1}|.$  As a warm up, we compute $H^2(P(m,\nu);\mathbb Z)$.  
  Note that 
  $\mathcal B_2=\emptyset$ and $\mathcal B_1'=\{[X(1,\mathbf i_0)]\}$ where $\mathbf i_0
  =(1,2\,\ldots, n)$ 
  is the identity permutation (which is the unique element of length $0$) if $m>1$ and $\mathcal B_1'$ is empty if $m=1$.
So the rank of $H_2(P(m,\nu);\mathbb Z)$ is zero.  Also 
  $H_1(P(m,\nu);\mathbb Z)\cong \mathbb Z_2$ if $m>1$ and is isomorphic to $\mathbb Z$
  if $m=1.$ 
  \begin{proposition}
      Let $m\ge 1.$  With the above notations,  
      \[ H^2(P(m,\nu);\mathbb Z)\cong \begin{cases}
          \mathbb Z_2&~\textrm{if~} m>1\\
          0&~\textrm{if~} m=1.\\
                \end{cases} \]
Also the Picard group $Pic(P(m,\nu))\cong \mathbb Z_2$, generated by $\xi_\mathbb C$ if $m>1$ and is 
trivial otherwise.  
  \end{proposition}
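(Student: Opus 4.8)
The plan is to deduce $H^2(P(m,\nu);\bz)$ from Theorem~\ref{Integral-cohomolgy-groups-of-P(m,n,k)} via the universal coefficient theorem, and then to obtain the Picard group from it, the only additional work being the identification of the generator. The universal coefficient theorem gives
\[
H^2(P(m,\nu);\bz)\;\cong\;\hom(H_2(P(m,\nu);\bz),\bz)\,\oplus\,\mathrm{Ext}^1_{\bz}(H_1(P(m,\nu);\bz),\bz).
\]
The first summand is $0$: since $\mathcal B_2=\emptyset$, the group $H_2(P(m,\nu);\bz)$ is entirely torsion and carries no nonzero homomorphism to $\bz$. For the second summand, the discussion preceding the proposition shows that $H_1(P(m,\nu);\bz)\cong\bz$, generated by the infinite-order class $[X(1,\mathbf i_0)]$, when $m=1$, while $H_1(P(m,\nu);\bz)\cong\bz_2$, generated by that same class (now of order $2$, since $\mathcal B_1'=\{[X(1,\mathbf i_0)]\}$), when $m>1$. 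Hence $\mathrm{Ext}^1_{\bz}(H_1(P(m,\nu);\bz),\bz)$ is $0$ for $m=1$ and $\bz_2$ for $m>1$, which is the assertion.

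For the Picard group, since $P(m,\nu)$ is a closed smooth manifold, isomorphism classes of complex line bundles on it are classified by $[P(m,\nu),\mathbb CP^\infty]=[P(m,\nu),K(\bz,2)]$, so $c_1$ is an isomorphism $\pic(P(m,\nu))\cong H^2(P(m,\nu);\bz)$; together with the first part this gives $\pic(P(m,\nu))\cong\bz_2$ for $m>1$ and $\pic(P(m,\nu))=0$ for $m=1$. It then remains, for $m>1$, to check that $c_1(\xi_\bc)\ne 0$ in $H^2(P(m,\nu);\bz)\cong\bz_2$, where $\xi_\bc=p^!(\gamma_m\otimes_\br\bc)$, with $\gamma_m$ the tautological real line bundle on $\mathbb RP^m$ and $p:P(m,\nu)\to\mathbb RP^m$ the bundle projection. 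Here I would use the cross-section $s_x:\mathbb RP^m\to P(m,\nu)$, $[u]\mapsto[u,x]$ (for a fixed $x\in\fix(\sigma)$) constructed in the preliminaries: as $p\circ s_x=\mathrm{id}_{\mathbb RP^m}$, the map $p^*:H^2(\mathbb RP^m;\bz)\to H^2(P(m,\nu);\bz)$ is injective. Both groups being $\cong\bz_2$ when $m\ge 2$, $p^*$ is an isomorphism; and $c_1(\gamma_m\otimes_\br\bc)$ is the nonzero element of $H^2(\mathbb RP^m;\bz)\cong\bz_2$ (it is the image of $w_1(\gamma_m)$ under the integral Bockstein, which is injective here), so $c_1(\xi_\bc)=p^*(c_1(\gamma_m\otimes_\br\bc))$ generates $H^2(P(m,\nu);\bz)$. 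Alternatively, $c_1(\xi_\bc)$ reduces mod $2$ to $\mathrm{Sq}^1$ of $p^*(w_1(\gamma_m))$, which is nonzero for $m\ge 2$ by the known structure of $H^*(P(m,\nu);\bz_2)$ from \cite{mandal-sankaran}.

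The $H^2$ computation is immediate once Theorem~\ref{Integral-cohomolgy-groups-of-P(m,n,k)} is available; the one point that requires attention is confirming that the named bundle $\xi_\bc$ is indeed $p^!(\gamma_m\otimes_\br\bc)$. Once that is settled, the presence of the section $s_x$ makes the identification of the generator routine, so I do not anticipate a genuine obstacle.
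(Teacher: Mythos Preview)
Your proposal is correct and follows essentially the same approach as the paper: the paper's proof simply invokes the universal coefficient theorem for the first assertion and the isomorphism $c_1:\pic\to H^2$ for the second, without spelling out the details. You have filled in those details faithfully, and in fact you go beyond the paper's terse argument by explicitly verifying that $\xi_\bc$ generates (via the section $s_x$ and injectivity of $p^*$), a point the paper asserts in the statement but does not justify in its proof.
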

\begin{proof}
    The first assertion follows from the universal coefficient theorem. The second assertion 
    follows from the fact that the first Chern class map yields an isomorphism of the 
    Picard group onto $H^2(P(m,\nu);\mathbb Z).$
\end{proof}

 We now turn to the determination of the rank and the torsion subgroup of 
  the integral cohomology groups of $P(m,\nu)$ explicitly in terms of $m,\nu$. 

  Write  
  $H^{\textrm{ev}}(P(m,\nu);\mathbb Z)
  =\bigoplus_{q\ge 0}H^{2q}(P(m,\nu);\mathbb Z)\cong \mathbb Z^{b_e}\oplus \mathbb Z^{b'_e}_2$
  and $H^{\mathrm{odd}}(P(m,\nu);\mathbb Z)=\bigoplus_{q\ge 1}H^{2q-1}(P(m,\nu);\mathbb Z)\cong \mathbb Z^{b_o}\oplus \mathbb Z_2^{b'_o}$.  
  We shall obtain formulas for the numbers $b_e,b_e',b_o,b'_o$ in terms of $m,\nu.$

Set $\mathcal B:=\bigcup_{q\ge 0}\mathcal B_q, \mathcal B_e:=\bigcup_{q\ge 0} \mathcal B_{2q}$ and 
let $\mathcal B_o=\mathcal B\setminus \mathcal B_e$.  Likewise $\mathcal B':=\bigcup_{q\ge 1}\mathcal B_q'$ and  
$\mathcal B'_e:=\{[X(j,\mathbf i)]\in \mathcal B'\mid j\equiv 0\pmod 2\},$ and $\mathcal B'_o=\mathcal B'\setminus \mathcal B'_e.$   
Also, set 
$\beta_e:=|\mathcal B'_e|, \beta_o:=|\mathcal B_o'|.$

By the universal coefficient theorem, $b_e=|\mathcal B_e|, b_o=|\mathcal B_o|$, whereas 
$b_e'=\beta_o,  b'_o=\beta_e.$

Set $\ell_e:= |I_e(\nu)|$ 
and $\ell_o=|I_o(\nu)|$.
Thus $\ell_e+\ell_o={n\choose \nu}=\chi(\mathbb CG(\nu)).$
We shall determine the values of $\ell_e, \ell_o$.

The Poincaré polynomial $P_t(\nu)$ of $\mathbb CG(\nu)$ is the $q$-Gaussian multinomial coefficient $Q_q(\nu):={n\choose \nu}_q=[n]_q/([n_1]_q\cdots[n_s]_q)$ where  
$q:=t^2$ and $[k]_q=\prod_{1\le j\le k}
(1-q^j).$  
The right hand side of $Q_q$ is also the Poincaré polynomial $P_q(\mathbb RG(\nu);\mathbb Z_2)$ of the 
{\em real} flag manifold $\mathbb RG(\nu)$ with 
$\mathbb Z_2$-coefficients.  See \cite[\S 26]{borel},  \cite{borel-cmh}.
Therefore
\begin{equation}\label{le-lo}
\ell_e-\ell_o
=\chi(\mathbb RG(\nu)).
\end{equation}
We conclude that 
\begin{equation}
\ell_e=(\chi(\mathbb CG(\nu))+\chi(\mathbb RG(\nu)))/2; ~~\ell_o=(\chi(\mathbb CG(\nu))-\chi(\mathbb RG(\nu)))/2.
\end{equation}

If $\chi(\mathbb RG(\nu))=0,$ then $\ell_e=\ell_o={n\choose \nu}/2$. 

The Euler-Poincaré characteristic of $\mathbb RG(\nu)$ is can be computed using 
standard arguments and the classical result that $\chi(G/T)=|W(G,T)|$, the order of 
the Weyl group $W(G,T)$ when $G$ is a compact connected Lie group and $T$ is 
a maximal torus. We include a proof for the sake of completeness as we could not find an explicit reference for the result.

\begin{lemma} \label{chi-real-flag-manifolds}
    Let $\nu=(n_1,\ldots,n_s), s\ge 2.$  Set $\nu_o:=\{1\le j\le s\mid n_j\equiv 1\mod 2\}.$
    
    Then $\chi(\mathbb RG(\nu))=0$ if $\nu_o\ge 2.$
    Suppose that
    $\nu_o\le 1.$  Set $n'_j:=\lfloor n_j/2\rfloor, 1\le j\le s,$ and 
    $\lfloor\nu/2\rfloor:=
    (n_1',n_2',\ldots, n_s')$.  Then $\chi(\mathbb RG(\nu))={\lfloor n/2\rfloor\choose 
    \lfloor\nu/2\rfloor}$.
\end{lemma}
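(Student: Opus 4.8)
The plan is to use the classical result $\chi(G/T)=|W(G,T)|$ together with the fibration structure of real flag manifolds, reducing everything to counting fixed points of a maximal torus acting on $\mathbb{R}G(\nu)$. Concretely, recall that $\chi(\mathbb{R}G(\nu))$ can be computed as the Euler characteristic of the total space of the iterated Grassmann bundle $\mathbb{R}G_{n,n_1}\hookleftarrow\cdots$, so by multiplicativity of Euler characteristics for fibre bundles (with compact base and fibre) it suffices to treat the real Grassmannian $\mathbb{R}G_{n,k}=O(n)/(O(k)\times O(n-k))$ and then combine. For the Grassmannian, I would invoke the standard fact that $\chi(\mathbb{R}G_{n,k})=0$ unless at least one of $n,k,n-k$ forces a torus-fixed-point count that is nonzero; more precisely, the maximal torus $T'$ of $O(n)$ (a product of $SO(2)$'s, of rank $\lfloor n/2\rfloor$) acts on $\mathbb{R}G_{n,k}$ and $\chi$ equals the number of $T'$-fixed points, which are the coordinate $k$-planes that are $T'$-invariant: such a plane is a sum of $2$-dimensional eigenplanes plus possibly a piece of the $\le 1$-dimensional fixed axis. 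This immediately yields $\chi(\mathbb{R}G_{n,k})=0$ when $k$ and $n-k$ are both odd (no invariant $k$-plane exists, since an invariant plane has even dimension when $n$ is even, and when $n$ is odd the parities of $k$ and $n-k$ differ), and otherwise $\chi(\mathbb{R}G_{n,k})=\binom{\lfloor n/2\rfloor}{\lfloor k/2\rfloor}$.

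The key steps, in order, would be: (1) State and use $\chi(G/T)=|W(G,T)|$ and deduce $\chi(\mathbb{R}G_{n,k})$ by identifying $\mathbb{R}G_{n,k}$ with a union of $T'$-fixed-point neighborhoods, i.e. counting $T'$-fixed $k$-planes in $\mathbb{R}^n$ where $T'\subset SO(n)$ is a maximal torus; organize $\mathbb{R}^n$ as $\bigoplus_{j=1}^{\lfloor n/2\rfloor} P_j \oplus L$ where each $P_j$ is a $T'$-invariant $2$-plane on which $T'$ acts by rotation and $L$ is $0$- or $1$-dimensional. (2) Observe a $k$-plane is $T'$-invariant iff it is $\bigoplus_{j\in A}P_j$ (possibly $\oplus L$), forcing $k$ even (or $k$ odd with $n$ odd using the $L$ summand); count these to get the binomial coefficient, and get $0$ in the remaining parity cases. (3) For general $\nu$, write $\mathbb{R}G(\nu)$ as an iterated fibre bundle over $\mathbb{R}G_{n,n_1}$ with fibre $\mathbb{R}G(n_2,\ldots,n_s)$ (inside $\mathbb{R}^{n-n_1}$), and use $\chi(\text{total})=\chi(\text{base})\cdot\chi(\text{fibre})$ and induction on $s$. (4) Track parities: if two or more $n_j$ are odd, then at some stage of the iteration one gets a Grassmannian factor with both a part and its complement odd, giving $\chi=0$; if $\nu_o\le 1$ then every stage contributes $\binom{\lfloor \cdot/2\rfloor}{\lfloor\cdot/2\rfloor}$ and the product telescopes to $\binom{\lfloor n/2\rfloor}{\lfloor\nu/2\rfloor}$ by the standard multinomial identity $\binom{N}{m_1}\binom{N-m_1}{m_2}\cdots=\binom{N}{m_1,m_2,\ldots}$.

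The main obstacle I expect is the careful parity bookkeeping in step (4): when one of the $n_j$ (say $n_r$) is odd and $n$ is even, the "leftover axis" $L$ shifts through the tower, and one must verify that the floors behave correctly, namely that $\sum_j \lfloor n_j/2\rfloor = \lfloor n/2\rfloor$ exactly when at most one $n_j$ is odd (and is strictly less otherwise, which is precisely the mechanism producing $\chi=0$ via a vanishing Grassmannian factor). The cleanest way to handle this is to argue directly with the $T'$-fixed points of the whole flag manifold rather than via the iterated bundle: a $T'$-fixed flag $(V_1,\ldots,V_s)$ is an ordered partition of the $\lfloor n/2\rfloor$ rotation planes $\{P_j\}$ into blocks of sizes $n_1'/2$-ish together with an assignment of the axis $L$ (if $\dim L=1$) to exactly one $V_r$; for this to be consistent each $n_j$ must be even except for the one receiving $L$, which proves both the vanishing ($\nu_o\ge2$) and the formula ($\nu_o\le1$) in one stroke, the count of such configurations being exactly $\binom{\lfloor n/2\rfloor}{\lfloor\nu/2\rfloor}$. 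I would present this direct fixed-point argument as the main proof, possibly mentioning the iterated-bundle induction as an alternative.
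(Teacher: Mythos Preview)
Your proposal is correct and takes a genuinely different route from the paper's proof. The paper splits into two cases with different techniques: for $\nu_o\ge 2$ it exhibits a fibration of $\mathbb{R}G(\nu)$ with fibre the Grassmannian $\mathbb{R}G(n_1,n_2)$ (taking $n_1,n_2$ odd), observes this fibre is odd-dimensional, and concludes $\chi=0$ by multiplicativity; for $\nu_o\le 1$ it passes to the \emph{oriented} flag manifold $\widetilde{\mathbb{R}G}(\nu)=SO(n)/SO(\nu)$, notes that $SO(\nu)$ and $SO(n)$ have equal rank, and then uses the fibration $SO(\nu)/T\hookrightarrow SO(n)/T\to SO(n)/SO(\nu)$ together with $\chi(G/T)=|W(G,T)|$ and the known Weyl group orders $|W(SO(k),T)|=2^{\lfloor (k-1)/2\rfloor}\lfloor k/2\rfloor!$ to get $\chi(\widetilde{\mathbb{R}G}(\nu))=2^{s-1}\binom{\lfloor n/2\rfloor}{\lfloor\nu/2\rfloor}$, finally dividing by the degree $2^{s-1}$ of the cover $\widetilde{\mathbb{R}G}(\nu)\to\mathbb{R}G(\nu)$.

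Your preferred argument, by contrast, works directly on $\mathbb{R}G(\nu)$ via the localization principle $\chi(M)=\chi(M^{T'})$ for a smooth torus action on a compact manifold, and counts $T'$-fixed flags as ordered partitions of the rotation planes $P_j$ (plus the assignment of the fixed line $L$ when $n$ is odd). This handles both parity regimes in one stroke: the non-existence of fixed flags when $\nu_o\ge 2$ and the multinomial count when $\nu_o\le 1$ fall out of the same combinatorial description. This is cleaner and avoids both the passage to the oriented cover and the explicit Weyl group orders; the paper's approach, on the other hand, stays closer to the classical $\chi(G/T)=|W|$ statement and requires no separate justification that the fixed-point set is finite. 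Your alternative iterated-bundle induction is also sound, and the parity check you flag (that $\lfloor n/2\rfloor-\lfloor n_1/2\rfloor=\lfloor (n-n_1)/2\rfloor$ holds precisely in the cases that arise under $\nu_o\le 1$) goes through; the paper uses a version of this only for the vanishing case.
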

\begin{proof}  Without loss of generality, assume that $n_j$ is even for all $j>\nu_o$.  
     When $\nu_o\ge 2$, 
    $\mathbb RG(\nu)$ is fibred by the real Grassmann manifold $\mathbb RG(n_1,n_2)$ over $\mathbb RG(n_1+n_2,n_3,\ldots, n_s).$ Since $\dim \mathbb RG(n_1,n_2)=n_1n_2$ is odd, $\chi(\mathbb RG(\nu))=0$ and so the vanishing of $\chi(\mathbb RG(\nu))$
    follows from the multiplicative property of the Euler-Poincaré characteristics for fibrations.  See \cite[Theorem 1,\S3, Chapter 9]{spanier} 

    Let $\nu_o\le 1$.  Consider first the Euler-Poincaré characteristic of the 
    {\em oriented} flag manifold $\widetilde{\mathbb RG}(\nu):=SO(n)/SO(\nu)$ where 
    $SO(\nu):=SO(n_1)\times\cdots \times SO(n_s).$  As $\nu_o\le 1$, $SO(\nu)$ has the same rank as $SO(n).$  
    We take $T=(SO(2))^{\lfloor n/2\rfloor}\subset SO(\nu)$ to be the standard maximal torus of $SO(n)$ which sits in $SO(n)$ as $2\times 2$ block diagonal with top diagonal entry being $1$ if $n$ is odd.   
    Using the fibration $SO(n)/T\to SO(n)/SO(\nu)$ with fibre $SO(\nu)/T$, we obtain 
    that $\chi(SO(n)/T)=\chi(SO(\nu)/T)).\chi(SO(n)/SO(\nu))$. 
    It is a classical result that when $G$ is a compact connected Lie group and $T\subset G$ is a maximal torus of $G$, then 
    $\chi(G/T)=|W(G,T)|$, the order of the Weyl group of $G$ with respect to $T$. 
    See \cite[Part-II]{mimura-toda}.
    It is known \cite{husemoller} that $W(SO(n),T)$ has order $2^{\lfloor (n-1)/2\rfloor}\cdot \lfloor n/2\rfloor !.$ This yields that $\chi(\widetilde{\mathbb RG}(\nu))=2^{s-1} {\lfloor n/2\rfloor
    \choose \lfloor \nu/2\rfloor}$.  The value of $\chi(\mathbb RG(\nu))$ is 
    then determined to be ${\lfloor n/2\rfloor
    \choose \lfloor \nu/2\rfloor}$ using the covering projection 
    $\widetilde{\mathbb RG}(\nu)\to \mathbb RG(\nu)$, which has degree $2^{s-1}.$
\end{proof}

With notations as in the above lemma, if $\nu_o\le 1$, then  
$\ell_e=({n\choose \nu}+{\lfloor n/2\rfloor \choose \lfloor \nu/2\rfloor })/2$ and $\ell_o=({n\choose \nu}-{\lfloor n/2\rfloor\choose \lfloor \nu/2\rfloor })/2.$

We tabulate below the values of $\ell_e$ and $\ell_o$. 
\[  
\begin{array}{|c||c|c|} 
 \hline
 \nu & \ell_e  &\ell_o  \\
 \hline\hline
 \nu_o\geq 2& {n\choose \nu}/2  &  {n\choose \nu}/2   \\
 \hline
 \nu_o\leq 1 & ({n\choose \nu} +{\lfloor n/2\rfloor\choose \lfloor\nu/2\rfloor})/2    &
 ({n\choose \nu} -{\lfloor n/2\rfloor\choose \lfloor\nu/2\rfloor})/2   \\
 \hline
\end{array}
\]
\begin{center}
Table 1: The values of $\ell_e$ and $\ell_o$.
\end{center}

From the definition of $\mathcal B'_e$ we see that if $\mathbf i\in I_o(\nu)$, then $[X(j,\mathbf i)]\in \mathcal B'_e$ for $\lfloor (m+1)/2\rfloor$ distinct values of $j$, namely, those which satisfy $j\equiv 0\pmod 2, 0\le j<m.$  Moreover, if $[X(j,\mathbf i)]\in \mathcal B'_e,$ then $\mathbf i\in I_o(\nu).$
Therefore $\beta_e=\lfloor (m+1)/2\rfloor\cdot  
\ell_o$.  Similarly $\beta_o=\lfloor m/2\rfloor\cdot  
\ell_e $.

The values of $b_e,b_o$ depend on $\nu$ and the parity of $m$. We proceed to determine them. 

We have $b_e+b_o=\dim_\mathbb Q(H^*(P(m,\nu);\mathbb Q))$ and $b_e-b_o=\chi(P(m,\nu)).$

{\em Case 1:} \underline{$m$ is even.}   Then $H^q(P(m,\nu);\mathbb Q)=0$ if $q$ is odd and so $b_o=0$. It follows that $b_e=\chi(P(m,\nu))=(1/2)\chi(\mathbb S^m\times \mathbb CG(\nu))={n\choose \nu}=n!/(n_1!\cdots n_s!).$

{\em Case 2:} \underline{$m$ is odd.} We have 
and $b_e-b_o=\chi(P(m,\nu))=0.$  Therefore $b_e=b_o.$  Also $b_e=|\{\mathbf i\in I(\nu)\mid 
\ell(\mathbf i)\equiv 0\mod 2\}|=\ell_e$.

We summarise the above results in the theorem below. 

\begin{theorem} \label{betti-torsion-coeffecients}
   Let $m\ge 1$ and $\nu=(n_1,\ldots, n_s)$ where $s\ge 2$.  Let $\nu_o:=|\{1\le j\le s\mid n_j\equiv 1\pmod 2\}|.$
   Then  $H^{\mathrm{ev}}(P(m,\nu);\mathbb Z)\cong 
   \mathbb Z^{b_e}\oplus \mathbb Z_2^{b'_e}$ and 
   $H^{\mathrm{odd}}(P(m,\nu);\mathbb Z)\cong 
   \mathbb Z^{b_o}\oplus \mathbb Z_2^{b'_o}$ where $b_e,b_o,b'_e,b'_o$ are as follows:\\
(i) When $m$ is odd,  
\[b_e=b_o=\ell_e=\begin{cases}
    {n\choose \nu}/2, & ~\mathrm{if~} \nu_o\ge 2 \\
    ({n\choose \nu}+{\lfloor n/2\rfloor \choose \lfloor \nu/2\rfloor})/2,& ~
    \mathrm{if~} \nu_o\le 1.\\
\end{cases}\]
When $m$ is even, $b_e={n\choose \nu}$ and $b_o=0$.\\
(ii)  $ b'_e=\beta_o=\lfloor m/2\rfloor\cdot  
\ell_e,~~~b'_o=\beta_e=\lfloor (m+1)/2\rfloor\cdot  
\ell_{o}.$

  In particular, $H^{ev}(P(1,\nu);\mathbb Z)$ has no 
    torsion. \hfill $\Box$
\end{theorem}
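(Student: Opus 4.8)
The plan is to deduce the theorem from Theorem~\ref{Integral-cohomolgy-groups-of-P(m,n,k)} together with the universal coefficient theorem, which reduces the whole statement to counting the cardinalities of the sets $\mathcal B_q$ and $\mathcal B'_q$. By that theorem $H_q(P(m,\nu);\mathbb Z)\cong\mathbb Z^{|\mathcal B_q|}\oplus\mathbb Z_2^{|\mathcal B'_q|}$, there is no odd torsion, and every $2$-torsion class has order $2$; hence
\[
H^q(P(m,\nu);\mathbb Z)\cong\mathbb Z^{|\mathcal B_q|}\oplus\mathbb Z_2^{|\mathcal B'_{q-1}|}.
\]
Summing over even, respectively odd, values of $q$, and noting that $[X(j,\mathbf i)]$ sits in $\mathcal B'_{j+2\ell(\mathbf i)}$ and hence contributes a $\mathbb Z_2$ to even-degree cohomology exactly when $j$ is odd — that is, exactly when $[X(j,\mathbf i)]\in\mathcal B'_o$ — and to odd-degree cohomology exactly when $j$ is even, one reads off $b_e=|\mathcal B_e|$, $b_o=|\mathcal B_o|$, $b'_e=|\mathcal B'_o|=\beta_o$ and $b'_o=|\mathcal B'_e|=\beta_e$. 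It remains to evaluate these four numbers.

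For the torsion ranks I would count directly from the definition of $\mathcal B'$. For a fixed $\mathbf i$, the constraints $j+\ell(\mathbf i)\equiv1\pmod2$ and $0\le j<m$ make $j$ range over the $\lfloor m/2\rfloor$ odd integers in $[0,m)$ when $\ell(\mathbf i)$ is even, and over the $\lfloor(m+1)/2\rfloor$ even integers in $[0,m)$ when $\ell(\mathbf i)$ is odd. Therefore each $\mathbf i\in I_e(\nu)$ contributes exactly $\lfloor m/2\rfloor$ elements to $\mathcal B'_o$ and each $\mathbf i\in I_o(\nu)$ contributes exactly $\lfloor(m+1)/2\rfloor$ elements to $\mathcal B'_e$, giving $\beta_o=\lfloor m/2\rfloor\,\ell_e$ and $\beta_e=\lfloor(m+1)/2\rfloor\,\ell_o$, which is assertion~(ii). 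Specializing to $m=1$ gives $b'_e=\beta_o=0$, the final sentence of the theorem.

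For the free ranks I would split on the parity of $m$. When $m$ is even the equation $2q+1=m+2\ell(\mathbf i)$ has no solution, so $\mathcal B_{2q+1}=\emptyset$ for all $q$ and $b_o=0$, while $\mathcal B_e$ is the disjoint union of $\{[X(0,\mathbf i)]\mid\mathbf i\in I_e(\nu)\}$ and $\{[X(m,\mathbf i)]\mid\mathbf i\in I_o(\nu)\}$, so $b_e=\ell_e+\ell_o=\binom n\nu$ (equivalently $b_e=\chi(P(m,\nu))=\tfrac12\chi(\mathbb S^m\times\mathbb CG(\nu))$, the rational cohomology being concentrated in even degrees by Proposition~\ref{nooddtorsion}). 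When $m$ is odd the equation $2q=m+2\ell(\mathbf i)$ has no solution, so the $[X(m,\cdot)]$-part of $\mathcal B_e$ is empty, leaving $\mathcal B_e=\{[X(0,\mathbf i)]\mid\mathbf i\in I_e(\nu)\}$ and $\mathcal B_o=\{[X(m,\mathbf i)]\mid\mathbf i\in I_e(\nu)\}$, so $b_e=b_o=\ell_e$ (consistent with $\chi(P(m,\nu))=0$). Finally I would compute $\ell_e$ by solving $\ell_e+\ell_o=\binom n\nu$ and $\ell_e-\ell_o=\chi(\mathbb RG(\nu))$, the latter being equation~\eqref{le-lo}, and substituting the value of $\chi(\mathbb RG(\nu))$ from Lemma~\ref{chi-real-flag-manifolds}; this yields $\ell_e=\binom n\nu/2$ when $\nu_o\ge2$ and $\ell_e=\bigl(\binom n\nu+\binom{\lfloor n/2\rfloor}{\lfloor\nu/2\rfloor}\bigr)/2$ when $\nu_o\le1$, which is~(i).

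The argument is essentially a bookkeeping assembly of material already in place; the only substantive inputs are Lemma~\ref{chi-real-flag-manifolds} (already proved) and the coincidence, used in deriving~\eqref{le-lo}, between the mod~$2$ Poincar\'e polynomial of $\mathbb RG(\nu)$ and the $q=t^2$ specialization of the Poincar\'e polynomial of $\mathbb CG(\nu)$. The one place where real care is needed — and the step most prone to off-by-one errors — is the parity bookkeeping in identifying $\mathcal B_e,\mathcal B_o,\mathcal B'_e,\mathcal B'_o$ and in counting the admissible values of $j$.
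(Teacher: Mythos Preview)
Your proposal is correct and follows essentially the same approach as the paper: the theorem is stated there with a $\Box$ because it simply summarizes the preceding discussion, which proceeds exactly as you outline---universal coefficients applied to Theorem~\ref{Integral-cohomolgy-groups-of-P(m,n,k)}, direct counting of $\beta_e,\beta_o$ from the definition of $\mathcal B'$, a parity split on $m$ for $b_e,b_o$, and the evaluation of $\ell_e$ via equation~\eqref{le-lo} and Lemma~\ref{chi-real-flag-manifolds}. Your bookkeeping matches the paper's at every step.
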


The following remark gives some partial information about the 
ring structure of the integral cohomology of $P(m,\nu)$.

\begin{remark}(i)  It is readily seen that $\mathbb CG(\nu)$ equals the Schubert variety $X(\mathbf w_0)$ that corresponds to the permutation 
$w_0=(n,n-1,\ldots,2, 1)\in S_n$ (in the one-line notation).  So $\sigma$ is orientation preserving if and only if 
$d=\dim_\mathbb C\mathbb CG(\nu)=\sum_{1\le i<j\le s}n_in_j$ is even, if and only if ${ \nu_o\choose 2}$ is even.  It follows that 
$P(m,\nu)$ is orientable if and only if $m+{\nu_o \choose 2}$ is odd. (See Remark \ref{orientability}.) When $P(m,\nu)$ is orientable,  the natural orientation 
on it corresponds to the fundamental class $[X(m,\mathbf w_0)].$

(ii)
 Since the bundle $p:P(m,\nu)\to \mathbb RP^m$ admits a cross-section $s:\mathbb RP^m\to P(m,\nu)$,
$p^*:H^*(\mathbb RP^m;\mathbb Z)\to H^*(P(m,\nu);\mathbb Z)$ is a monomorphism of rings 
and its image is a direct summand. 

(iii) Let $\iota:\mathbb CG(\nu)\hookrightarrow P(m,\nu)$ denote the  fibre-inclusion of the bundle $p: P(m,\nu)\to \mathbb RP^m$ over a point, say $[e_1]\in \mathbb RP^m.$ 
Since $\{ [X^+(0,\mathbf i)]\mid \mathbf i\in I_e(\nu)\}$ is a $\mathbb Z$-basis for $\bigoplus_{q\ge 0}H_{4q}(\mathbb CG(\nu);\mathbb Z)\subset H_*(\mathbb S^m\times \mathbb CG(\nu);\mathbb Z)$ and since 
$\iota_*([X(\mathbf i)])= \pi_*[X^+(0,\mathbf i)]=[X(0,\mathbf i)]$, it follows that $\iota_*(\bigoplus H_{4q}(\mathbb CG(\nu);\mathbb Z))\cong\mathbb Z^{\ell_e} $ is a direct summand of $H_{4q}(P(m,\nu);\mathbb Z).$
Therefore 
\begin{equation}\label{iota^*}
\iota^*: \bigoplus_{q\ge 0}H^{4q}(P(m,\nu);\mathbb Z)\to \bigoplus_{q\ge 0}H^{4q}(\mathbb CG(\nu);\mathbb Z)
\end{equation}
is a surjective ring homomorphism. Since $\bigoplus_{q\ge 0}H^{4q}(\mathbb CG(\nu);\mathbb Z)$ is torsion free, the kernal of $\iota^*$ contains the torsion subgroup of $\bigoplus_{q\ge 0}H^{4q}(P(m,\nu);\mathbb Z)$. \\

(iv) 
Let $\{x(m,\mathbf i)\} $ in $Hom(H_*(P(m,\nu);\mathbb Z),\mathbb Z) $ be dual to $[X(m,\mathbf i)]$
with respect to the basis $\mathcal B=\bigcup_{q\ge 0} \mathcal B_q$ of $H_*(P(m,\nu);\mathbb Z)/\mathrm{torsion}$.
We have \[\begin{array}{rcl}
\langle \iota^*(x(m,\mathbf i)),[X(\mathbf j)]\rangle &=&\langle x(m,\mathbf i), \iota_*([X(\mathbf j)])\rangle \\&=& \langle x(m,\mathbf i),[X(0,\mathbf j)]\rangle\\&=&0,\\
\end{array}
\]
for all $[X(\mathbf j)]\in H_*(\mathbb CG(\nu);\mathbb Z)$.  
Thus the kernel $\iota^*:H^*(P(m,\nu);\mathbb Z)\to H^*(\mathbb CG(\nu);\mathbb Z)$ contains $x(m,\mathbf i)$.

(v)   
The cup-product $x(m,\mathbf i)\smile x(m,\mathbf j)$ is a torsion element.  This is 
obvious when any one of the factors is a torsion element. 
Suppose that both are of infinite order.  Now $\pi^*(x(m,\mathbf i)), 
\pi^*(x(m, \mathbf j))\in H^m(\mathbb S^m;\mathbb Z)\otimes H^*(\mathbb CG(\nu);\mathbb Z).$ Hence $x(m,\mathbf i)\smile x(m,\mathbf j)\in \ker \pi^*$. 
By Proposition \ref{nooddtorsion}, $\ker(\pi^*)$ consists only of torsion elements, 
and so our assertion follows.

\end{remark}

\subsection{The ring structure of $H^*(P(m,\nu);R)$}
Let $R$ be a commutative ring in which $2$ is a unit.
From Theorem \ref{Integral-cohomolgy-groups-of-P(m,n,k)} we know that 
as an $R$-module, $H^q(P(m,\nu);R)$ is free 
of rank $|\mathcal B_q|.$
We now turn to the ring structure of $H^*(P(m,\nu);R)$.  We shall first describe 
it as a subring of $H^*(\mathbb S^m\times \mathbb CG(\nu);R)$ via the 
homomorphism $\pi^*$ induced by the double covering projection $\pi:\mathbb S^m\times 
\mathbb CG(\nu)\to P(m, \nu).$  Note that since $2$ is a unit in $R$, $\pi^*$ is a monomorphism; see Proposition \ref{nooddtorsion}. Then, in Theorem \ref{cohomologyringofPmnu} we shall 
describe $H^*(P(m,\nu);R)$ as a quotient of a polynomial algebra. 

It will be convenient 
to use Chern classes of the canonical bundles over $\mathbb CG(\nu)$ as ring generators of $H^*(\mathbb CG(\nu);\mathbb Z).$

Let $1\le j\le s$ and let $\gamma_j$ denote the complex $n_j$-plane bundle over $\mathbb CG(\nu)$ whose fibre over $\mathbf L=(L_1,\ldots, L_s)$ is $L_j.$ Then 
\[\bigoplus_{1\le j\le s} \gamma_j=n\epsilon_\mathbb C.\]
Denote by $c(\omega,t)$ the total Chern polynomial  $1+c_1(\omega)t+c_2(\omega)t^2+\cdots 
+c_r(\omega)t^r$ in the indeterminate $t$, where $c_j(\omega)$ is the $j$th Chern class of $\omega$ and $r$ is the rank of $\omega.$  The following relation holds in $H^*(\mathbb CG(\nu);\mathbb Z)$, in view of the above vector bundle isomorphism, where 
\begin{equation}\label{chernpolynomial-identity}
    \sum_{0\le r\le n}f_pt^p:= \prod_{1\le j\le s} c(\gamma_j,t)=1. 
\end{equation}
Recall that 
\[H^*(\mathbb C G(\nu);\mathbb Z)=\mathbb Z[c_{r,j}; 1\le r\le n_j, 1\le j\le s]/\langle f_1,\cdots, f_n\rangle \]
where the $f_j$ are defined by the equation 
\begin{equation}\label{chern-formal}
\sum_{0\le r\le n} f_rt^r=\prod_{1\le j\le s}
(1+c_{1,j}t+\cdots+c_{n_j,j}t^{n_j})
\end{equation}
under an isomorphism that maps $c_{r,j}$ to $c_r(\gamma_j)$.  See \cite[Proposition 31.1]{borel}.  

Denote by $u_m\in H^m(\mathbb S^m;\mathbb Z)$ the positive generator (with respect to the standard orientation on the sphere). 

We now describe, using Proposition \ref{Integral-cohomolgy-groups-of-P(m,n,k)}, the  
subring fixed by $\theta^*$ in $H^*(\mathbb S^m\times \mathbb CG(\nu);R)\cong 
R[u_m]\otimes R[c_{r,j}; 1\le r\le n_j, 1\le j\le s]/\langle u_m^2,\; f_p, 1\le p\le n\rangle$. Since $\pi^*$ defines an isomorphism of $H^*(P(m,\nu);R)\cong \fix(\theta^*)$, we have the following result. We omit the proof, which involves a straightforward verification.

\begin{theorem} \label{cohomologyofpmnu-as-subalgebra}
Suppose that $2$ is a unit in $R$. Then $H^*(P(m,\nu))$ is isomorphic 
to the subalgebra $\fix(\theta^*)\subset H^*(\mathbb S^m\times \mathbb CG(\nu);R)$
which is generated by the following elements: \\
{\it Case} (1): \underline{$m$ is even.} 
\[
\begin{array}{rll}
u_m c_{2p-1}(\gamma_r),& 1\le 2p-1\le n_r,~ 1\le r\le s ;\\
c_{2j}(\gamma_r),& 1\le j\le n_r,~ 1\le r\le s; \;\textrm{and}\\
c_{2p-1}(\gamma_i)c_{2q-1}(\gamma_j),& 1\le 2p-1\le n_i,~ 1\le 2q-1\le n_j,~ 1\le i\le j\le s.\\
\end{array}
\]
{\it Case} (2):  \underline{$m$ is odd.} 
\[
\begin{array}{rll}
u_m;\;\; c_{2j}(\gamma_r),& 1\le j\le n_r,~ 1\le r\le s;\;  \textrm{and}\\  
c_{2p-1}(\gamma_i)c_{2q-1}(\gamma_j),& 
1\le 2p-1\le n_i,~ 1\le 2q-1\le n_j,~ 1\le i \le j\le s.
\end{array}
\]
~\hfill $\Box$
\end{theorem}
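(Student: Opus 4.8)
The plan is to work entirely inside the ring $H^*(\mathbb S^m\times\mathbb CG(\nu);R)\cong R[u_m]\otimes R[c_{r,j}]/\langle u_m^2, f_p\rangle$, identify the action of $\theta^*=\alpha^*\otimes\sigma^*$ on a convenient $R$-basis, and then exhibit generators for the $R$-subalgebra of fixed elements. Since $2$ is a unit in $R$, Proposition \ref{nooddtorsion} gives $\pi^*:H^*(P(m,\nu);R)\xrightarrow{\ \sim\ }\fix(\theta^*)$, so it suffices to compute $\fix(\theta^*)$. The key input is the behaviour of $\sigma^*$ on $H^*(\mathbb CG(\nu);R)$: by Lemma \ref{conjugation-cw} (applied to the Schubert cell structure, where $\sigma$ is orientation-preserving on a $2k$-cell iff $k$ is even), $\sigma_*$ — hence $\sigma^*$ — acts by $(-1)^k$ on $H^{2k}$. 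Expressed via Chern classes, $\sigma^*(c_r(\gamma_j))=(-1)^r c_r(\gamma_j)$ for all $r,j$; in particular $\sigma^*$ fixes the even Chern classes $c_{2j}(\gamma_r)$ and negates the odd ones $c_{2p-1}(\gamma_r)$. On the sphere factor, $\alpha^*(u_m)=(-1)^{?}u_m$: since $\alpha$ reverses orientation on $\mathbb S^m$ exactly when $m$ is odd, $\alpha^*(u_m)=u_m$ when $m$ is even and $\alpha^*(u_m)=-u_m$ when $m$ is odd (this is also consistent with Theorem \ref{half-spin-bundles on even spheres}, where $H^m(\alpha;\mathbb Q)=-\mathrm{id}$ in the even case — note $u_m$ there is degree-$m$, so the sign conventions must be tracked carefully).

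With these signs in hand, decompose the monomial basis of $H^*(\mathbb S^m\times\mathbb CG(\nu);R)$ into $\theta^*$-eigenspaces. A monomial $u_m^\epsilon\cdot M$, where $\epsilon\in\{0,1\}$ and $M$ is a monomial in the $c_{r,j}$, is a $\theta^*$-eigenvector; its eigenvalue is $(-1)^{\epsilon\cdot\delta_m}\cdot(-1)^{\deg_{\mathrm{odd}}(M)}$, where $\delta_m=0$ if $m$ even, $1$ if $m$ odd, and $\deg_{\mathrm{odd}}(M)$ counts (with multiplicity) the odd Chern classes dividing $M$. Since $2\in R^\times$, $\fix(\theta^*)$ is spanned over $R$ by the eigenvalue-$1$ monomials. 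First I would check that the listed elements indeed lie in $\fix(\theta^*)$ — this is immediate from the eigenvalue formula: for $m$ even, $u_m c_{2p-1}(\gamma_r)$ has eigenvalue $(-1)\cdot(-1)=1$, $c_{2j}(\gamma_r)$ has eigenvalue $1$, and $c_{2p-1}(\gamma_i)c_{2q-1}(\gamma_j)$ has eigenvalue $(-1)(-1)=1$; for $m$ odd, $u_m$ is fixed, and the rest as before. Then I would show these generate: any fixed monomial $u_m^\epsilon M$ has $\epsilon\cdot\delta_m+\deg_{\mathrm{odd}}(M)$ even, so either (a) $\epsilon=0$ and $M$ has an even number of odd-Chern-class factors, hence $M$ is a product of $c_{2j}(\gamma_r)$'s and of pairs $c_{2p-1}(\gamma_i)c_{2q-1}(\gamma_j)$; or (b) $m$ even, $\epsilon=1$, and $M$ has an odd number of odd factors, so $u_m M = (u_m c_{2p-1}(\gamma_r))\cdot M'$ with $M'$ of type (a); or (c) $m$ odd, $\epsilon=1$, $M$ of type (a), giving $u_m\cdot M'$. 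In every case $u_m^\epsilon M$ is a product of the listed generators (using $u_m^2=0$ to rule out $\epsilon=1$ appearing twice when $m$ odd). This establishes that $\fix(\theta^*)$ is the subalgebra generated by the listed elements.

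The main obstacle I anticipate is purely bookkeeping: making the eigenvalue formula and the factorization argument fully rigorous in the presence of the relations $f_1,\dots,f_n$ — one must argue at the level of the polynomial ring $R[u_m]\otimes R[c_{r,j}]$ and then descend, checking that the relations $f_p$ are themselves $\sigma^*$-invariant (they are, since each $f_p$ is a sum of monomials of fixed Chern-degree $p$, all with the same $\sigma^*$-sign $(-1)^p$ — wait, that is not quite it either: $f_p$ is not homogeneous in "odd-degree count", so one checks instead that $f_p$ is $\sigma^*$-invariant because $\prod c(\gamma_j,t)=1$ and $\sigma^*$ sends this product to $\prod c(\gamma_j,-t)$; comparing coefficients shows $\sigma^*(f_p)=(-1)^p f_p$, but since $f_p=0$ for $p\ge 1$ and $f_0=1$, invariance of the ideal is automatic). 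This is exactly the "straightforward verification" the authors elect to omit; the conceptual content is entirely contained in the two sign computations $\alpha^*(u_m)=(-1)^{\delta_m}u_m$ and $\sigma^*(c_r(\gamma_j))=(-1)^r c_r(\gamma_j)$ together with the fact that halving is allowed in $R$.
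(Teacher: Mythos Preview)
Your overall approach is exactly the ``straightforward verification'' the paper omits: identify $H^*(P(m,\nu);R)$ with $\fix(\theta^*)$ via Proposition~\ref{nooddtorsion}, compute the $\theta^*$-eigenvalue of each monomial, and factor the fixed monomials into the listed generators. That strategy is correct and complete in outline.

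However, you have the sign of $\alpha^*(u_m)$ backwards, and this propagates into an internal inconsistency. The antipodal map on $\mathbb S^m$ has degree $(-1)^{m+1}$, so $\alpha$ \emph{reverses} orientation precisely when $m$ is \emph{even}: thus $\alpha^*(u_m)=-u_m$ for $m$ even and $\alpha^*(u_m)=u_m$ for $m$ odd. (You actually cite Theorem~\ref{half-spin-bundles on even spheres}, where $H^m(\alpha;\mathbb Q)=-\mathrm{id}$ for even $m$, and then dismiss the discrepancy as ``sign conventions''---that was the moment to stop and recheck.) With your stated convention $\delta_m=0$ for $m$ even, your own eigenvalue formula gives $u_m c_{2p-1}(\gamma_r)$ eigenvalue $(-1)^{1\cdot 0}\cdot(-1)^1=-1$, not the $(-1)\cdot(-1)=1$ you write in the verification; likewise for $m$ odd your formula would give $u_m$ eigenvalue $-1$. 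Your verification paragraph silently uses the \emph{correct} signs, which is why you reach the right conclusion, but the text as written is inconsistent. Swap your $\delta_m$ (take $\delta_m=1$ for $m$ even, $0$ for $m$ odd, i.e.\ $\alpha^*(u_m)=(-1)^{m+1}u_m$) and everything lines up: the listed elements are then genuinely fixed, and your factorization argument in cases (a)--(c) goes through verbatim.
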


Next, we shall present $H^*(P(m,\nu);R)$ as a quotient of a polynomial algebra over $R$.
We introduce commuting `variables' $u_m, c_{2p,i}, c'_{2p-1,i,}, 
c^{\prime\prime}_{2p-1,2q-1, i,j}$, 
$1\le 2p-1\le n_i, 1\le 2q-1\le n_j, 1\le i\le j\le s$. Their degrees are assigned 
as follows: $|u_m|=m, |c_{2p,i}|=4p, |c'_{2p-1,i}|=m+4p-2, |c^{\prime\prime}_{2p-1,2q-1,i,j}|=4(p+q-1)$.  

{\em It will be tacitly assumed that $c^{\prime\prime}_{2p-1,2q-1,i,j}
$ is the same as the variable $ c^{\prime\prime}_{2q-1,2p-1,j,i}. $} 

Recall the polynomials $f_r, 1\le r\le n,$ defined in Equation \ref{chern-formal}. When $r=2r'$ is even, one may express $f_r$ as a polynomial in $c_{2p,i}, c_{2p-1,2q-1,i,j}$, possibly in more than one way.  Choose one such expression and define $F_r$ to be the resulting polynomial.

When $m$ is even, the same procedure applied to $c_{2t-1,l}f_r, 2p-1\le n_l,$ 
 for $r$ odd yields the polynomials $F_{r, 2t-1,l}$ in the variables  
$ c_{2q,j}, c'_{2p-1,i}, c^{\prime\prime }_{2p'-1,2q'-1,i',j'}$, 
$1\le 2p'-1,2p-1 \le n_{i'},
 1\le 2q'-1,2q\le n_j,   1\le i,j \le s.$ 
   Similarly, assuming that $r$ is odd, 
 $u_m f_r$ can be expressed as a polynomial in the variables $c_{2q,j}, c'_{2p-1,i}, c^{\prime\prime}_{2p'-1,2q'-1,i',j'}$ in several ways.  
 We choose one such expression and denote the resulting polynomial by  
 $F'_{r,m}.$

\begin{theorem} \label{cohomologyringofPmnu} We keep the above notations. 
Let $R$ be a commutative ring in which $2$ is a unit.  Then $H^*(P(m,\nu);R)$ is isomorphic, as a graded $R$-algebra to $\mathcal R/\mathcal J$ where $\mathcal R$ is a polynomial algebra $\mathcal R$ whose description 
depends on the parity of $m$ and is defined below.

{\em Case (1)}. Suppose that $m\equiv 0\pmod 2$. Then 
\begin{center}
    $\mathcal R: =R[c_{2t,i};\; c'_{2p-1,i};\; c^{\prime\prime}_{2p-1,2q-1, i,j}; ~1\le 2t, 
2p-1\le n_i,~ 1\le 2q-1\le n_j,~ 1\le i,j\le s], $
\end{center} 
with $|c_{2t,i}|=4t, |c_{2p-1,i}'|=m+4p-2, |c^{\prime\prime}_{2p-1,2q-1,i,j}|=4p+4q-4.$
The ideal $\mathcal J\subset \mathcal R$ is the ideal generated by the following elements:\\
(i) $c'_{2p-1,i}c'_{2q-1,j}; ~~ c'_{2p-1,i}
c^{\prime\prime}_{2q-1,2q'-1,j,j'}-c'_{2q'-1,j'}c^{\prime\prime}_{2p-1,2q-1,i,j}$;\\ 
(ii) $c^{\prime\prime}_{2p-1,2q-1, i,j}c^{\prime\prime}_{2p'-1,2q'-1,i',j'}-c^{\prime\prime}_{2p-1,2p'-1,i,i'}c^{\prime\prime}_{2q-1,2q'-1, j,j'}$;\\
(iii) $F_{2r},~ F'_{2k-1,m}, ~F_{2k-1, 2t-1,l},~  1\le 2k-1, 2r \le n,~ 1\le 2t-1\le n_l,~ 1\le l\le s $;\\
where  $2p-1\le n_i,~ 2p'-1\le n_{i'},~ 2q-1\le n_j,~ 2q'-1\le n_{j'},~ 1\le i,j,i',j'\le s$.\\
The isomorphism $\eta:\mathcal R/\mathcal J\to H^*(P(m,\nu);R)$ is established by 
\[
c_{2p,i}\mapsto c_{2p}(\gamma_i),\;\;
c'_{2p-1,i}\mapsto u_mc_{2p-1}(\gamma_i),\;\;
 c^{\prime\prime}_{2p-1,2q-1,i,j}\mapsto c_{2p-1}(\gamma_i)c_{2q-1}(\gamma_j).
\]

{\em Case (2)}. Suppose that $m\equiv 1\pmod 2$.  Then 
\begin{center}
    $\mathcal R:=R[u_m;\; c_{2p,i},1\le 2p\le n_i;\;\;  c^{\prime\prime}_{2p-1,2q-1,i,j},~ 2p-1\le n_i,~ 2q-1\le n_j,~ 1\le i,j\le s],$
\end{center}
with $|u_m|=m, |c_{2p,i}|=4p, |c_{2p-1,2q-1,i.j}^{\prime\prime}|=4p+4q-4$.  
 The ideal $\mathcal J\subset \mathcal R$ is generated by 
the following elements:\\
(i) $u_m^2$;~~ $c^{\prime\prime}_{2p-1,2q-1,i,j}c^{\prime\prime}_{2p'-1,2q'-1,i',j'}-
c^{\prime\prime}_{2p-1,2p'-1,i,i'}c^{\prime\prime}_{2q-1,2q'-1,j,j'}$;\\ 
(ii) $F_{2r}, ~F_{2k-1, 2t-1, l},~ 1\le 2t-1\le n_l,~ 1\le l\le s;$\\
where  $2p-1\le n_i,~ 2p'-1\le n_{i'},~ 2q-1\le n_j,~ 2q'-1\le n_{j'},~ 1\le i,j,i',j'\le s$.\\
An isomorphism $\eta: \mathcal R/\mathcal J\to H^*(P(m,\nu);R)$ is obtained as 
\[
u_m\mapsto u_m,\;\; c_{2p,i}\mapsto c_{2p}(\gamma_i),\;\; 
c^{\prime\prime}_{2p-1,2q-1,i,j}\mapsto c_{2p-1}(\gamma_i)c_{2q-1}(\gamma_j).
\] 
\end{theorem}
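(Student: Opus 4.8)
The plan is to deduce this presentation directly from Theorem~\ref{cohomologyofpmnu-as-subalgebra}, which already identifies $H^*(P(m,\nu);R)$ with the subalgebra $\fix(\theta^*)\subset H^*(\mathbb S^m\times\mathbb CG(\nu);R)$ generated by the listed classes. First I would define the $R$-algebra map $\eta:\mathcal R\to H^*(P(m,\nu);R)$ by the assignments in the statement (in each parity case), noting that $\eta$ is degree-preserving by the chosen degrees of the variables, and that its image is exactly $\fix(\theta^*)$ by Theorem~\ref{cohomologyofpmnu-as-subalgebra}; hence $\eta$ is surjective. It then remains to show $\ker\eta=\mathcal J$. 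The inclusion $\mathcal J\subseteq\ker\eta$ is the routine direction: the relations in (i) and (ii) are verified by pulling back to $H^*(\mathbb S^m\times\mathbb CG(\nu);R)$ and using $u_m^2=0$, $c_{2p-1}(\gamma_i)c_{2q-1}(\gamma_j)c_{2p'-1}(\gamma_{i'})c_{2q'-1}(\gamma_{j'})$ being symmetric in how one pairs the four odd Chern classes, together with the fact that a product of two elements of the form $u_m\cdot(\text{odd Chern class})$ lands in $u_m^2\cdot(\cdots)=0$; the relations in (iii) (resp.\ (ii) for $m$ odd) hold because $F_{2r}$, $F'_{2k-1,m}$, $F_{2k-1,2t-1,l}$ were constructed precisely as polynomial expressions lifting $f_{2r}$, $u_mf_{2k-1}$, $c_{2t-1,l}f_{2k-1}$, all of which vanish in $H^*(\mathbb CG(\nu);R)$ by Equation~\eqref{chern-formal}.

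The substantive direction is $\ker\eta\subseteq\mathcal J$, equivalently a dimension count: since $H^q(P(m,\nu);R)$ is free of rank $|\mathcal B_q|$ over $R$ (by Theorem~\ref{Integral-cohomolgy-groups-of-P(m,n,k)} and the fact that $2$ is a unit, so there is no torsion contribution), it suffices to show that $(\mathcal R/\mathcal J)_q$ is generated as an $R$-module by at most $|\mathcal B_q|$ elements. The strategy is to produce an explicit spanning set for $\mathcal R/\mathcal J$ indexed in bijection with $\mathcal B=\bigcup_q\mathcal B_q$. Concretely, using the relations (ii) one shows that any monomial in the $c''$ variables is congruent mod $\mathcal J$ to a ``standard'' monomial of the form $c''_{2p_1-1,2q_1-1,i_1,j_1}$ times a monomial purely in the even variables $c_{2t,i}$ — i.e.\ at most one $c''$ factor survives — because $c''_{ab}c''_{cd}\equiv c''_{ac}c''_{bd}$ lets one ``re-pair'' and then absorb; and in the $m$ even case, relation (i) further forces at most one $c'$ factor and no $c'$ together with a $c''$ in a distinguished normal form (a product $c'\cdot c'$ is zero, $c'\cdot c''$ gets rewritten). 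After this reduction the even-variable part is a quotient of $R[c_{2t,i}]$ by the even half of the Chern relations $F_{2r}$, which is precisely (the image under $c_{2p}(\gamma_i)\mapsto$ appropriate class of) the subring $\bigoplus_q H^{4q}(\mathbb CG(\nu);R)$, of rank $\ell_e=|I_e(\nu)|$; matching this against the structure of $\mathcal B$ (classes $[X(0,\mathbf i)]$, $\mathbf i\in I_e(\nu)$, the ``$u_m$'' or ``$c''$'' translates indexed by $I_o(\nu)$ or by pairs, and the $[X(m,\mathbf i)]$ classes) gives the required upper bound on ranks in every degree, hence equality and $\ker\eta=\mathcal J$.

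The main obstacle I anticipate is the normal-form argument for the $c''$ (and, when $m$ is even, $c'$) variables modulo $\mathcal J$: one must check that the rewriting rules coming from relations (i),(ii) are confluent enough to reduce every monomial to a canonical representative without introducing new monomials that escape the intended index set, and that the surviving even-variable Chern relations $F_{2r}$ together with the ``mixed'' relations $F_{2k-1,2t-1,l}$ (and $F'_{2k-1,m}$) exactly cut the module down to the predicted rank rather than something strictly smaller. This is essentially a Gröbner-basis/straightening-law bookkeeping problem; the cleanest way to finish is probably not to prove confluence abstractly but to (a) exhibit a spanning set of the right cardinality in each degree using the rewriting rules in one direction only, and (b) invoke the known rank $|\mathcal B_q|$ of $H^q(P(m,\nu);R)$ from Theorem~\ref{betti-torsion-coeffecients} to force the spanning set to be a basis and $\eta$ to be injective.

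\medskip

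I should also remark that one small point worth isolating: since $\eta$ is a surjection of finitely generated graded $R$-modules that are free (the target by Theorem~\ref{Integral-cohomolgy-groups-of-P(m,n,k)}, and $\mathcal R/\mathcal J$ free once we know a basis), over a commutative ring $R$ a graded surjection between free modules of equal rank in each degree is automatically an isomorphism; this lets us avoid any delicate flatness considerations and reduces everything to the rank comparison described above.
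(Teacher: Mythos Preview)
Your overall strategy---show $\eta$ is a well-defined surjection onto $\fix(\theta^*)$, check $\mathcal J\subseteq\ker\eta$, then argue by a rank comparison---matches the paper's approach. The genuine gap is in your normal-form claim for the $c''$ variables. You assert that using relation (ii) one can reduce every monomial so that ``at most one $c''$ factor survives,'' but the quadratic relations $c''_{ab}c''_{cd}\equiv c''_{ac}c''_{bd}$ only \emph{re-pair} the four indices; they do not lower the number of $c''$ factors, and there is nothing to ``absorb'' a $c''$ into unless an $F$-relation happens to express that particular $c''$ in even variables. For a concrete failure, take $\nu=(1,1,1,1,1)$: every $n_j=1$, so there are \emph{no} even variables $c_{2t,i}$ at all, and your normal form would give an $R$-module of rank $0$ in degree $8$. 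But $H^8(\mathbb CG(\nu);R)\subset\fix(\sigma^*)$ has rank $20$ (the number of permutations in $S_5$ of length $4$). So your proposed spanning set is not a spanning set, and the dimension count cannot close.

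The paper sidesteps this entirely. Rather than seek monomial normal forms, it embeds the abstract algebra generated by the $c_{2t,i}$ and $c''$ variables (modulo only the quadratic relations (ii)) into the full polynomial ring $\tilde{\mathcal R}_0=R[c_{q,i}]$, identifying it with the subring $\mathcal R_0=\bigoplus_{t\ge 0}\tilde{\mathcal R}_0^{4t}=\fix(\tilde\sigma)$, where $\tilde\sigma(c_{q,i})=(-1)^qc_{q,i}$. (This is the statement that the second Veronese of a polynomial ring is presented by the $2\times 2$ ``minor'' relations.) Passing to the quotient by $\tilde{\mathcal J}_0=\langle f_1,\dots,f_n\rangle$ then gives $\mathcal R_0/(\mathcal R_0\cap\tilde{\mathcal J}_0)\cong\bigoplus_t H^{4t}(\mathbb CG(\nu);R)=\fix(\sigma^*)$ directly, with no combinatorics on monomials. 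Once this base ring is identified, the full $\mathcal R/\mathcal J$ is handled as a module over it: free on $\{1,u_m\}$ when $m$ is odd, and generated by $\{1,c'_{2p-1,i}\}$ when $m$ is even (with a short argument that the $c'$-part matches $Ru_m\otimes\bigoplus_t H^{4t+2}(\mathbb CG(\nu);R)$). If you want to repair your approach, the cleanest fix is to replace your monomial reduction by this Veronese-embedding step.
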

\begin{proof}
   
It is clear $\eta$ is a well-defined $R$-algebra homomorphism. 
Also image of $\eta$ equals $\fix(\theta^*).$
Since $2$ is a unit in $R$, we have $H^*(P(m,\nu);R)\cong \fix(\theta^*)\subset H^*(\mathbb S^m\times \mathbb CG(\nu);R)$ and so, to complete the proof, 
we need only show that it is a monomorphism of $R$-modules.

Let $\tilde{\mathcal R}_0$ be the polynomial algebra $\widetilde{\mathcal R}_0=R[c_{q,i}\mid 1\le q\le n_i; 1\le i\le s]$ and let $\mathcal R_0$ be the $R$-subalgebra of $\tilde{\mathcal R}_0$ 
generated by the $c_{2q,i},\;\; 
c_{2p-1,2q-1,i,j}:=
c_{2p-1,i}c_{2q-1,j}$.  The algebra $\tilde{\mathcal R}_0$
is graded by $|c_{q,i}|=2q~\forall i.$   
Let $\tilde\sigma:\tilde{\mathcal R}_0\to \tilde{\mathcal R}_0$ be the $R$-algebra involution defined by $c_{q,i}\mapsto (-1)^qc_{q,i}$.
Then $\mathcal R_0=\bigoplus_{t\ge 0} \widetilde{R}^{4t}
=\fix(\tilde\sigma)$. 
The cohomology algebra 
$H^*(\mathbb CG(\nu);R)$ is isomorphic to 
$\widetilde{R}_0/\tilde{\mathcal J}_0$ where $\tilde{\mathcal J}_0$
is generated by $f_1,\cdots,f_n$ defined as in Equation \ref{chernpolynomial-identity}.
The isomorphism is obtained by sending $c_{q,i}$ to $c_q(\gamma_i)\in H^{2q}(\mathbb CG(\nu);R).$

The ideal $\tilde{\mathcal J}_0$ is graded and we have that $\tilde{ \mathcal J}_0\cap \mathcal R_0
=\oplus_{t\ge 0}\tilde{\mathcal J}_0^{4t}$.  
So $\mathcal J_0:=\mathcal R_0\cap \tilde{ \mathcal J}_0\subset \mathcal R_0$
is generated as an ideal by the following elements:
\[f_{2r},\; c_{2p-1,i}f_{2r-1}\in \fix(\tilde\sigma), 1\le 2p-1\le n_i,1\le i\le s.\]  We may express each of these elements as polynomials in the chosen generators of $\mathcal R_0$.  The non-uniqueness of such expressions for a given ideal generator arises from the equality 
$c_{2p-1,2q-1,i,j}c_{2p'-1,2q'-1,i',j'}=
c_{2p-1,2p'-1,i,i'}c_{2q-1,2q'-1,j,j'}.$

Consider the polynomial algebra over $R$ generated by the indeterminates $c_{2p,i}, c_{2p-1,2q-1,i,j}^{\prime\prime}$.   Then the quotient 
algebra 
\begin{center}
$R[c_{2p,i}, 
c_{2p-1,2q-1,i,j}^{\prime\prime}]/
\langle c_{2p-1,2q-1,i,j}^{\prime\prime}c_{2p'-1,2q'-1,i',j'}^{\prime\prime}
-c_{2p-1,2p'-1,i,i'}^{\prime\prime}c_{2q-1,2q'-1,j,j'}^{\prime\prime}\rangle $ 
\end{center}
is isomorphic to $\mathcal R_0\subset \tilde{\mathcal R}_0$ via the $R$-algebra homomorphism 
\begin{center}
    $c_{2p,i}\mapsto c_{2p,i}, ~c_{2p-1,2q-1,i,j}^{\prime\prime}\mapsto c_{2p-1,2q-1,i,j}.$
\end{center}

Set $R_0:=R[c_{2p,i}, c_{2p-1,2q-1,i,j}^{\prime\prime}]$ and let $J_0\subset R_0$ be the ideal generated by 
\begin{center}
    $c_{2p-1,2q-1,i,j}^{\prime\prime}c_{2p'-1,2q'-1,i',j'}^{\prime\prime}
-c_{2p-1,2p'-1,i,i'}^{\prime\prime}c_{2q-1,2q'-1,j,j'}^{\prime\prime},\;\; 
F_{2r}, \;\;F_{2r-1, 2t-1,l}$.
\end{center}  Then 
\begin{equation}
    R_0/J_0
\cong \mathcal R_0/\mathcal J_0\cong H^{4*}(\mathbb CG(\nu);R)\cong \fix(\sigma^*).
\end{equation}

The ring $\mathcal R/\mathcal J$ is an $\mathcal R_0/\mathcal J_0$-module whose 
(definition and) structure depends on the parity of $m$.  

First, consider the case when $m$ is odd.  In this case, $\mathcal R/\mathcal J$ is a free $\mathcal R_0/\mathcal J_0$-module with basis $\{1, u_m\}$. 
Also, 
$H^*(\mathbb S^m;R)=R[u_m]/\langle u_m^2\rangle$ is fixed by $\alpha^*$.  

Since $u_m^2\in\mathcal{J},$ we have the following isomorphisms of $R$-algebras $\mathcal R/\mathcal J
\cong (\mathcal R_0/\mathcal J_0)[u_m]/\langle u^2_m\rangle \cong R[u_m]/\langle u_m^2\rangle \otimes_R(\mathcal R_0/\mathcal J_0)\cong H^*(\mathbb S^m;R)\otimes 
\fix(\sigma^*)\cong \fix(\theta^*)\cong H^*(P(m,\nu);R)$.  
This completes the proof 
when $m$ is odd.

Now let $m$ be even.  
We have $\alpha^*(u_m)=-u_m$ and  
\begin{equation} \label{m-even-fixtheta-star}
\fix(\theta^*)=\bigoplus_{t\ge 0}H^{4t}(\mathbb CG(\nu);R)\oplus \left( \bigoplus_{t\ge 0}Ru_m\otimes H^{4t+2}(\mathbb CG(\nu);R)\right).
\end{equation}
Since each $H^{4t+2}(\mathbb CG(\nu);R)$ is a free $R$-module for any $t\ge 0$,  so 
is $Ru_m\otimes H^{4t+2}(\mathbb CG(\nu);R)\cong H^{4t+2}(\mathbb CG(\nu);R).$

On the other hand, 
$\mathcal R/\mathcal J$ is generated as a
$\mathcal R_0/\mathcal J_0$-algebra by the 
$c'_{2p-1,i}$.  Since $c'_{2p-1,i}c'_{2q-1,j}=0$, we see that $\mathcal R/\mathcal J$
is generated as an $\mathcal R_0/\mathcal J_0$-module 
by $\{1, c'_{2p-1,i}\}.$ 
Further, in view of the relation $c'_{2p-1,i}c^{\prime\prime}_{2q-1,2q'-1,j,j'}=c'_{2q'-1,j'}c^{\prime\prime}_{2p-1,2q-1,i,j}$
in $\mathcal R/\mathcal J,$ if $x\in \mathcal R/\mathcal J$ is a monomial in the generators $c_{2p-1,2q-1,i,j}^{\prime\prime}$ and if $r$ such that $1\le 2r-1\le n_l,1\le l\le s$, then 
$c'_{2r-1,l}x$ is {\em uniquely} expressible  
as $c'_{2r_0-1,l_0}y$ with $y$ a monomial 
in the generators $c_{2p-1,2q-1,i,j}^{\prime\prime}$ and $(r_0,l_0)$ is the {\em least} in lexicographical order
among such expressions. 
This shows that, as $R$-submodules of $\mathcal R/\mathcal J$,
\[\sum (\mathcal R_0/\mathcal J_0)c'_{2p-1,i}~\cong \bigoplus_{t\ge 0} Ru_m\otimes H^{4t+2}(\mathbb CG(\nu); R).\]   
It follows that 
$\mathcal R/\mathcal J\cong \mathcal R_0/\mathcal J_0\oplus (\bigoplus_{t\ge 0}(Ru_m\otimes H^{4t+2}(\mathbb CG(\nu);R)))$ is an isomorphism.  Thus, in view 
of Equation \ref{m-even-fixtheta-star}, we have the $R$-algebra isomorphism
$\mathcal R/\mathcal J\cong \fix(\theta^*)\cong H^*(P(m,\nu);R)$ 
defined by $\eta.$ 
\end{proof}

\section{$K$-theory of $P(m,\nu)$}

Our goal is to determine the ring structure of the complex $K$-theoretic ring 
$K^*(P(m,\nu))$ for any $m\ge 1$ and $\nu=(n_1,\ldots,n_s), s\ge 2.$ However, 
we have been greeted with limited success in our efforts. However, our results 
are complete up to a `finite ambiguity' in a sense that will be made precise 
later.

We now proceed to describe our results on (a) the additive structure of 
$K^*(P(m,\nu)),$  
(b) the ring structure of $K^*(P(m,\nu))\otimes \mathbb Q$, and, (c) a subring of $K^0(P(m,\nu))$ generated by the classes of certain canonical (complex) vector bundles on $P(m,\nu).$  

\subsection{ The additive structure of $K^*(P(m,\nu))$} 

Recall, from \cite{atiyah-hirzebruch},
the Atiyah-Hirzebruch spectral sequence $(E_r^{p,q},d_r)$, which abuts to $K^*(B)$ for a finite CW complex $B$.   The  
$E_2$-page is defined as $E_2^{p,q}=H^p(B;K^q(\{*\}))$ where $\{*\}$ denotes a one-point space.  The differential  
is of bidegree $d_r: E^{p,q}_r\to E_r^{p+r,q+1-r} $ has bidegree $(r,1-r).$
As such $d_r=0$ if $r$ is even, since $K^q(\{*\})=0$ for $q$ odd.

Let $\omega$ be a complex vector bundle over $B$. Recall that 
the Chern character $\textrm{ch}(\omega)\in H^*(B;\mathbb Q)$ is defined as 
$\textrm{ch}(\omega)=\sum_{k\ge 0} (\sum_{1\le j\le r} x_i^k/k!)$ where $x_j, 1\le j\le r$ are the `Chern roots' of $\omega.$  Thus, the $x_j$ are defined by a formal factorization of the 
Chern polynomial: 
\[ c(\omega,t)=\sum_{0\le p\le r}c_p(\omega)t^p=\prod_{1\le j\le r}(1+x_jt).\]  
Atiyah and Hirzebruch \cite{atiyah-hirzebruch} proved that the Chern character $\mathrm{ch}:K^0(B)\otimes \mathbb Q\to H^{\textrm{ev}}(B;\mathbb Q)=\bigoplus_{q\ge 0}H^{2q}(B;\mathbb Q)$ defined by $[\omega]\mapsto \textrm{ch}(\omega)$ is an isomorphism of rings.  In particular, the rank of $K^0(B)$ equals that of $H^{\textrm{ev}}(B;\mathbb Z)$.  It follows that if $H^*(B;\mathbb Z)$ has no 
$p$-torsion for a prime $p$, neither does $K^0(X).$   The same result applied 
to the suspension $S(B)$ of $B$ yields an isomorphism 
$K^1(B)\otimes \mathbb Q\to H^{\textrm{odd}}(B;\mathbb Q)=\bigoplus_{q\ge 1}H^{2q-1}(B;\mathbb Q).$

Taking $B$ to be $P(m,\nu)$ we obtain the following.  
Denote by $\xi_\mathbb C$ the 
complexification of the Hopf line bundle $\xi$ over $\mathbb RP^m.$  
Clearly $\xi_\mathbb C\otimes \xi_\mathbb C\cong \epsilon_\mathbb C$ and so $([\xi_\mathbb C]-1)^2
=-2([\xi_\mathbb C]-1).$  Adams \cite{adams} showed that $K^0(\mathbb RP^m)\cong \mathbb Z[y]/\langle y^2+2y, y^{r+1}\rangle=\mathbb Z\oplus \mathbb Z_{2^{r}}y$ where $y=[\xi_\mathbb C]-1$ and  $r=\lfloor m/2\rfloor.$  We shall denote by the same symbol $\xi_\mathbb C$ the pull-back $p^!(\xi_\mathbb C)$
on $P(m,\nu)$.

\begin{theorem}\label{k-groups-pmnu}
Let $m\ge 1$, $\nu=(n_1,\ldots, n_s), s\ge 2.$ Let $b_e,b_o, b'_e,b'_o$ be as in 
Theorem \ref{betti-torsion-coeffecients}.
Then:\\ (i)  $K^0(P(m,\nu))\cong \mathbb Z^{b_e}\oplus A_0$ where $A_0$ is a finite abelian group 
of order $2^k$ for some $k$,  with $0\le k\le  b_e'.$  The group $A_0$ contains a summand $\mathbb Z_{2^{\lfloor m/2\rfloor}}$, generated by $y=[\xi_\mathbb C]-1.$\\
(ii) $K^1(P(m,\nu))\cong \mathbb Z^{b_o}\oplus A_1$, where $A_1$ is a finite abelian group 
of order $2^t$ for some $0\le t\le b_o'$. \\ 
In particular, $K^1(P(m,\nu))$ is a torsion group if $m$ is even,
and  $K^0(P(1,\nu))$ is a torsion-free group 
\end{theorem}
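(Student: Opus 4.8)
The plan is to run the Atiyah--Hirzebruch spectral sequence (AHSS) $(E_r^{p,q},d_r)$ converging to $K^*(P(m,\nu))$, with $E_2^{p,q}=H^p(P(m,\nu);\mathbb{Z})$ for $q$ even and $0$ for $q$ odd, and to feed in the cohomology computed in Theorems~\ref{Integral-cohomolgy-groups-of-P(m,n,k)} and \ref{betti-torsion-coeffecients}. Write $H^p:=H^p(P(m,\nu);\mathbb{Z})\cong\mathbb{Z}^{a_p}\oplus\mathbb{Z}_2^{b_p}$; by Theorem~\ref{Integral-cohomolgy-groups-of-P(m,n,k)} every torsion class has order $2$, and $\sum_{p\,\mathrm{even}}a_p=b_e$, $\sum_{p\,\mathrm{odd}}a_p=b_o$, $\sum_{p\,\mathrm{even}}b_p=b'_e$, $\sum_{p\,\mathrm{odd}}b_p=b'_o$. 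As $P(m,\nu)$ is a finite CW complex the spectral sequence converges, and each $K^i(P(m,\nu))$ carries a finite filtration with associated graded $\bigoplus_{p\equiv i\,(2)}E_\infty^{p,*}$.

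First I would fix the free ranks. The Chern character (recalled in the excerpt) gives $\mathrm{rank}\,K^0(P(m,\nu))=\mathrm{rank}\,H^{\mathrm{ev}}=b_e$ and $\mathrm{rank}\,K^1(P(m,\nu))=b_o$. On the other hand $\mathrm{rank}\,E_{r+1}^{p,*}\le\mathrm{rank}\,E_r^{p,*}$ always, so $\mathrm{rank}\,E_\infty^{p,*}\le a_p$ for every $p$; summing over all $p$ and comparing $\sum_p\mathrm{rank}\,E_\infty^{p,*}=b_e+b_o=\sum_p a_p$ forces $\mathrm{rank}\,E_r^{p,*}=a_p$ for all $p$ and all $r$. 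In particular every differential $d_r$ has finite image. I would then prove by induction on $r$ that $E_r^{p,*}\cong\mathbb{Z}^{a_p}\oplus\mathbb{Z}_2^{b_p^{(r)}}$ with $b_p^{(r)}\le b_p$: the kernel of the outgoing $d_r$ is a full-rank subgroup of $E_r^{p,*}$, hence of the same shape; the incoming $d_r$ has finite image, which therefore sits inside the (elementary abelian $2$-) torsion subgroup; and quotienting such a group by a finite subgroup of its torsion preserves the shape and does not increase the torsion rank. Hence $E_\infty^{p,*}\cong\mathbb{Z}^{a_p}\oplus\mathbb{Z}_2^{e_p}$ with $e_p\le b_p$, and reassembling $K^i$ from its filtration yields $K^0(P(m,\nu))\cong\mathbb{Z}^{b_e}\oplus A_0$ and $K^1(P(m,\nu))\cong\mathbb{Z}^{b_o}\oplus A_1$, where $|A_0|$ divides $\prod_{p\,\mathrm{even}}2^{e_p}$, so $|A_0|=2^k$ with $0\le k\le\sum_{p\,\mathrm{even}}e_p\le b'_e$, and likewise $|A_1|=2^t$ with $0\le t\le b'_o$.

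Next I would locate the distinguished cyclic summand of $A_0$. The flag-bundle projection $p\colon P(m,\nu)\to\mathbb RP^m$ admits a cross-section (namely $s_x$ for any $x\in\mathrm{Fix}(\sigma)$, as in \S2), so $p^!\colon K^*(\mathbb RP^m)\to K^*(P(m,\nu))$ is a split monomorphism. By Adams' theorem $K^0(\mathbb RP^m)\cong\mathbb{Z}\oplus\mathbb{Z}_{2^{\lfloor m/2\rfloor}}$ with the torsion generated by $y=[\xi_\mathbb C]-1$; therefore $p^!(y)$ is a torsion class of order $2^{\lfloor m/2\rfloor}$ generating a direct summand of $K^0(P(m,\nu))$, which, being torsion, is a direct summand of $A_0$. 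Finally the two stated special cases drop out of Theorem~\ref{betti-torsion-coeffecients}: for $m$ even one has $b_o=0$, so $K^1(P(m,\nu))=A_1$ is a torsion group; and for $m=1$ one has $b'_e=\lfloor 1/2\rfloor\cdot\ell_e=0$, whence $k\le b'_e=0$, $A_0=0$, and $K^0(P(1,\nu))$ is torsion free.

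The main obstacle is precisely this torsion estimate. A priori $E_\infty^{p,*}$ is only a subquotient of $H^p$, and a subquotient of a group whose torsion has order $2$ can have torsion of order $4$; so the ``no odd torsion, $2$-torsion of order $2$'' structure of $H^*$ does not by itself bound $|A_0|$. What rescues the argument is the rank count above: because the Chern character pins down $\mathrm{rank}\,K^*$, every differential in the AHSS has finite image, so one never quotients by a subgroup containing a free part, and the shape $\mathbb{Z}^{a}\oplus(\text{elementary }2\text{-group})$ is propagated from $E_2$ to $E_\infty$. Everything else is bookkeeping with Theorems~\ref{Integral-cohomolgy-groups-of-P(m,n,k)} and \ref{betti-torsion-coeffecients} together with the split injectivity of $p^!$.
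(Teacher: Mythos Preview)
Your proof is correct and follows the same strategy as the paper's: Chern character for the free ranks, the Atiyah--Hirzebruch spectral sequence for the torsion bound, and the cross-section of $p:P(m,\nu)\to\mathbb RP^m$ (together with Adams' computation of $K^0(\mathbb RP^m)$) for the $\mathbb Z_{2^{\lfloor m/2\rfloor}}$ summand. Your treatment is in fact more careful on one point: the paper simply asserts that the torsion of $E_\infty$ is bounded by that of $E_2$, whereas you justify this via the rank count showing every AHSS differential has finite image (equivalently, rational degeneration), so that the shape $\mathbb Z^{a_p}\oplus\mathbb Z_2^{b_p^{(r)}}$ with $b_p^{(r)}\le b_p$ propagates through the pages.
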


\begin{proof}
Since $\textrm{ch}:K^*(P(m,\nu))\otimes \bq \to  H^*(P(m,\nu);\mathbb Q)$ is an 
isomorphism and since $K^*(P(m,\nu))$ is finitely generated group, it follows 
that the ranks of $K^0(P(m,\nu)), K^1(P(m,\nu))$ are the same as those of $H^{\textrm{ev}}(P(m,\nu);\mathbb Z), H^{\textrm{odd}}(P(m,\nu);\mathbb Z)$ 
respectively.  This shows that the ranks of $K^0(P(m,\nu)),K^1(P(m,\nu))$ as 
asserted.

 Since the Atiyah-Hirzebruch spectral sequence converges to $K^*(P(m,\nu)),$ 
$K^0(P(m,\nu))$ has a filtration so that the associated graded 
group is isomorphic to $\bigoplus_{p\ge 0} E_\infty^{p,-p}.$
Since the torsion subgroup of $\bigoplus_{p\ge 0} E^{2p,-2p}_2=H^{\textrm{ev}}(P(m,\nu);\mathbb Z)$ 
is a $2$-group of order $2^{b'_e}$, we obtain that 
the torsion subgroup of $K^0(P(m,\nu))$ is a $2$-group 
whose orders are bounded above by $2^{b'_e}$.
Similar argument yields that the torsion subgroup of $K^1(P(m,\nu))$ is a $2$-group 
of order at most $2^{b'_o}.$

To see that $K^0(P(m,\nu))$ contains a summand isomorphic to $\mathbb Z_{2^{\lfloor m/2\rfloor}},$ we note that 
$p:P(m,\nu)\to \mathbb RP^m$ admits a cross-section, namely $s([v])=[v,\mathbf E]$ where 
$\mathbf E=(\mathbb C^{n_1},\ldots,\mathbb C^{n_s})$ in which the standard basis vectors $e_{n_1+\cdots+n_{j-1}+1},\ldots, e_{n_1+\cdots+n_j}$ span the $j$-th coordinate   
$\mathbb C^{n_j}.$ 
It follows that the composition $s^!\circ p^!$ is the identity map of $K^0(\mathbb RP^m)$.  Hence 
$p^!$ is a monomorphism whose image is a summand of $K^0(P(m,\nu)$.  
By the work of Adams recalled above $\widetilde{K}^0(\mathbb RP^m)=\mathbb Z_{2^{\lfloor m/2\rfloor}}y$.  This completes the proof. 
\end{proof}

We pause for an example.
\begin{example}
(i) In the case of classical Dold manifolds, $\nu=(1,n-1)$ and $P(m,\nu)=P(m,n-1).$
The group $K^*(P(m,n-1))$ had been completely determined by Fujii 
\cite[Theorem 3.14]{fujii-66}.  The table below summarises Fujii's results (in our notations) yielding  
the rank and the order of the torsion groups of $K^0(P(m,n-1)), K^1(P(m,n-1))$.  Our results are consistent with 
Fujii's work but we have not been able to determine the groups $A_0,A_1$.   
\[
\begin{array}{|c|c||c|c||c|c|}
\hline
\multicolumn{2}{|c||}{P(m,n-1)} & \multicolumn{2}{c||}{K^0} & \multicolumn{2}{c|}{K^1} \\ 
\hline
m & n-1 & b_e & o(A_0) & b_o & o(A_1) \\ 
\hline \hline
2r & 2t & 2t+1 & 2^{r} & 0 &  0\\
\hline
2r+1 & 2t & t+1 & 2^{r} & t+1 & 0 \\
\hline
2r & 2t+1 & 2t +2 & 2^r & 0 & 2^{r}\\
\hline
2r+1& 2t+1 & t+1 & 2^r & t+1 & 2^{r+1} \\
\hline
\end{array}
\]

\begin{center}
    Table 2: The values of $b_e,b_o,o(A_0),o(A_1)$ for $P(m,n-1)$.
\end{center}

(ii) When $\mathbb CG(\nu)$ is the complete flag manifold, that is, when $\nu=(1,\ldots, 1)$,
$\nu_o=n$. We leave out the trivial case when $n=1$ and assume $n\ge 2.$  Now the values of $b_e,b_e',b_o,b_o'$ for $P(m,(1,\ldots,1))$  in  Theorem \ref{k-groups-pmnu} are described in the following table.
\[
\begin{array}{|c||c|c||c|c|}
\hline
P(m,\nu)&\multicolumn{2}{c||}{K^0}&\multicolumn{2}{c|}{K^1}\\
\hline
m &b_e&b_e'& b_o& b_o'\\
\hline\hline
2r &n!/2&rn!/2& n!/2  &rn!/2\\
\hline
2r+1 &n!&rn!/2& 0  & (r+1)n!/2\\
\hline
\end{array}
\]

\begin{center}
    Table 3: The values of $b_e,b_e',b_o,b_o'$ for $P(m,(1,\ldots,1))$.
\end{center}

\end{example}

\subsection{The ring structure of $K^0(P(m,\nu))$.}
Recall that $\gamma_j, 1\le j\le s,$ denotes the canonical complex $n_j$-plane 
bundle over $\mathbb CG(\nu)$. It is a $\sigma$-conjugate bundle, so we obtain 
the real vector bundle $\hat{\gamma}_j$ over $P(m,\nu).$  Denote by $\hat{\gamma}_{j\mathbb C}$ the complexification $\hat{\gamma}_j\otimes_\mathbb R\mathbb C.$ 

Let $f: \mathbb CG(\nu)\to P(m,\nu)$ be the inclusion of the fibre over the base point $[e_1]\in \mathbb RP^m$. Thus $f(\mathbf L)=[e_1,\mathbf L]$. Since $f^!(\hat\gamma_j)
=\gamma_j$ as {\em real} vector bundles, it follows that $f^!(\hat{\gamma}_{j\mathbb C})=\gamma_j\otimes \mathbb C\cong \gamma_j\oplus \bar\gamma_j$ as complex vector bundles.   Note that complexification commutes with Whitney sums, tensor products, 
and exterior products.   
So, we have $f^!(\Lambda^k(\hat{\gamma}_{j\mathbb C}))=\Lambda^k(\gamma_j\oplus \bar\gamma_j)\cong \bigoplus_{p+q=k} \Lambda^p(\gamma_j)\otimes \Lambda^q(\bar\gamma_j)
\cong \bigoplus_{p+q=k}\Lambda^p(\gamma_j)\otimes \overline{\Lambda^q(\gamma_j)}$ for all $k\ge 0, 1\le j\le s.$

We recall the following description of the complex $K$-ring of $\mathbb CG(\nu)$. See \cite[Theorem 3.6, Chapter-IV]{karoubi}.

\begin{theorem}\label{k-theory-flag-manifold} The ring 
    $K^0(\mathbb CG(\nu))$ is isomorphic to the polynomial ring  
    generated by $\lambda_{p,j},~1\le p\le n_j, ~1\le j\le s$, modulo the 
    ideal generated by the set $\{h_p-{n\choose p}\mid 1\le p\le n\},$ where  $h_p=h_p(\lambda_{q,j})$ is the coefficient of $t^p$ in the following polynomial 
    in the variable $t$:
    \[ \sum_{0\le p\le n}h_pt^p = \prod_{1\le j\le s}(\sum_{0\le r\le n_j}\lambda_{r,j}t^r).\] 
    Here $h_0=1, \lambda_{0,j}=1~\forall j.$ The isomorphism is defined by 
    sending $\lambda_{p,j}$ to $[\Lambda^p(\gamma_j)].$
    Moreover, $K^1(\mathbb CG(\nu))=0.$ \hfill $\Box$
\end{theorem}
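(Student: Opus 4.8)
Theorem \ref{k-theory-flag-manifold} asserts that $K^0(\mathbb{C}G(\nu))$ is the polynomial ring on generators $\lambda_{p,j}$, $1 \le p \le n_j$, modulo the ideal generated by $h_p - \binom{n}{p}$, $1 \le p \le n$, with $\lambda_{p,j} \mapsto [\Lambda^p(\gamma_j)]$, and that $K^1(\mathbb{C}G(\nu)) = 0$.

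\medskip

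\textbf{Approach.} The plan is to deduce this from the Leray--Hirsch / Atiyah--Hirzebruch picture for flag bundles, exactly as stated in the reference \cite[Theorem 3.6, Chapter IV]{karoubi}, and then to simply record it. Since the theorem is cited, the expectation is that we are not reproving Karoubi's result from scratch but rather invoking it. If a self-contained argument were wanted, I would proceed as follows. First I would note that $\mathbb{C}G(\nu)$ is the flag bundle $\mathrm{Flag}_\nu(\epsilon_{\mathbb{C}}^n) \to \mathrm{pt}$ associated to the trivial $n$-plane bundle, and build up the computation inductively along the tower of Grassmann bundles $\mathbb{C}G(\nu) \to \mathbb{C}G(\nu') \to \cdots$ (with $\nu' = (n_1,\dots,n_{s-2},n_{s-1}+n_s)$ as in the earlier discussion of the Schubert structure). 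Each stage is a Grassmann bundle, and for a Grassmann bundle $\mathrm{Gr}_k(E) \to B$ one has the classical description of $K^0(\mathrm{Gr}_k(E))$ as a free $K^0(B)$-module with an explicit presentation coming from the splitting principle and the identity $[\Lambda^*(\gamma)] \cdot [\Lambda^*(\gamma^\perp)] = [\Lambda^*(E)]$ in $K$-theory. The vanishing $K^1 = 0$ follows because at each stage $K^1$ of the fibre (a complex Grassmannian) vanishes and $K^1(B) = 0$ by induction, so the Atiyah--Hirzebruch spectral sequence (which degenerates, since $H^*(\mathbb{C}G(\nu);\mathbb{Z})$ is concentrated in even degrees and torsion-free by the Schubert cell structure) forces $K^1(\mathbb{C}G(\nu)) = 0$ and $K^0$ torsion-free of the expected rank.

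\medskip

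\textbf{Key steps in order.}
(1) Observe $H^*(\mathbb{C}G(\nu);\mathbb{Z})$ is free and concentrated in even degrees (already established via the Schubert cell structure above), so the Atiyah--Hirzebruch spectral sequence for $K^*(\mathbb{C}G(\nu))$ collapses at $E_2$; hence $K^1 = 0$ and $K^0$ is free abelian of rank $\binom{n}{\nu}$.
(2) Identify a natural ring map $\Phi$ from the quotient ring $R := \mathbb{Z}[\lambda_{p,j}]/\langle h_p - \binom{n}{p}\rangle$ to $K^0(\mathbb{C}G(\nu))$ sending $\lambda_{p,j} \mapsto [\Lambda^p(\gamma_j)]$; the relations $h_p = \binom{n}{p}$ hold in $K^0$ because $\bigoplus_j \gamma_j = n\epsilon_{\mathbb{C}}$, so $\prod_j [\Lambda^*(\gamma_j)] = [\Lambda^*(n\epsilon_{\mathbb{C}})] = (1+t)^n$ in $K^0(\mathbb{C}G(\nu))[t]$, i.e. $\Phi$ is well defined.
(3) Show $R$ is free abelian of rank $\binom{n}{\nu}$ — this is the quotient of a polynomial ring by a regular sequence of the expected degrees, a standard Gröbner/monomial-basis computation mirroring the cohomology presentation in \cite[Proposition 31.1]{borel}.
(4) Show $\Phi$ is surjective: via the Chern character and the fact that the $\lambda_{p,j}$ map to the $[\Lambda^p(\gamma_j)]$ whose Chern characters generate $H^*(\mathbb{C}G(\nu);\mathbb{Q})$, combined with the filtration argument (the associated graded of $K^0$ is $H^{\mathrm{ev}}$), one sees the image of $\Phi$ has full rank and is all of $K^0$ since $K^0$ is torsion-free.
(5) Conclude $\Phi$ is an isomorphism by comparing ranks: a surjection between free abelian groups of the same finite rank is an isomorphism.

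\medskip

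\textbf{Main obstacle.} The only genuinely substantive point is step (4) — verifying surjectivity of $\Phi$, i.e. that the exterior powers of the canonical bundles generate $K^0(\mathbb{C}G(\nu))$ as a ring, not merely rationally. The clean way around this is the inductive Grassmann-bundle argument with the Leray--Hirsch theorem in $K$-theory: at each stage the $K$-theory of the total space is generated over the $K$-theory of the base by the exterior powers of the tautological subbundle, because those classes restrict to a $\mathbb{Z}$-basis of $K^0$ of the fibre. Since every step of the flag tower is a Grassmann bundle and the base at the bottom is a point, an easy induction gives the result; the bookkeeping of which monomials in the $\lambda_{p,j}$ form a $K^0(\mathrm{pt})$-basis is the same combinatorics as the cohomological Schubert basis. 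As all of this is precisely the content of Karoubi's cited theorem, in the paper we simply state Theorem \ref{k-theory-flag-manifold} and refer to \cite[Theorem 3.6, Chapter IV]{karoubi}.
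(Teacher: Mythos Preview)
Your proposal is correct and matches the paper's approach: the paper does not prove this theorem at all but simply records it as a citation to \cite[Theorem 3.6, Chapter IV]{karoubi}, marking the statement with a $\Box$. Your additional sketch of the inductive Grassmann-bundle/Leray--Hirsch argument is exactly the content of that reference, so nothing is missing.
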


We shall refer to the generators $\lambda_{p,j}$ as the {\em canonical generators} 
of $K^0(\mathbb CG(\nu)).$
\begin{remark}\label{all-bundles-conjugate}
For any compact connected Lie group $G$ and a closed subgroup $H\subset G,$  
the $\alpha$-construction yields a $\lambda$-ring homomorphism $RH\to K^0(G/H)$ 
defined by that sends $[V]\in RH$ to the class of the associated vector bundle with projection
$G\times_HV\to G/H$. (See \cite{atiyah-hirzebruch}, \cite{husemoller}.)
The bundle $\gamma_j$ is associated to the representation $\lambda_{1,j}: S(U(\nu)):=S(U(n_1)\times \cdots \times U(n_s))\to U(n_j)$, the projection to the $j$th coordinate, via the so-called `$\alpha$-construction'. Taking the $p$th exterior 
power, we obtain that $\Lambda^p(\gamma_j)$ is associated to $\Lambda^p(\lambda_{1,j})=\lambda_{p,j}$ in the representation ring $RS(U(\nu)).$  
Using the known description of the representation of the unitary group (see \cite[Chapter 14]{husemoller}), 
it is readily seen that $RS(U(\nu))=\mathbb Z[\lambda_{p,j}, \lambda_{n_j ,j}^{- 1}\mid 1\le j\le n_j]/\langle \prod_{1\le j\le s} \lambda_{n_j,j}-1\rangle. $ Note that  
$\lambda_{n_j,j}^{-1}=\prod_{i\ne j} \lambda_{n_i,i}$ in $RS(U(\nu)).$ 
Denote by $\lambda_1$ the standard representation of $SU(n)$ on $\mathbb C^n$. 
We denote by $\rho([V])\in RS(U(\nu))$  the restriction of the class $[V]$ of a $S(U(\nu))$ representation $V.$ Thus $\rho(\lambda_1)=\sum_{1\le j\le s}\lambda_{1,j}.$
Set $\Lambda_t([V]):=\sum_{1\le j\le r}[\Lambda^q(V)]t^q.$
Then $\Lambda_t(\lambda_1)=\prod_{1\le j\le s}\Lambda_t(\lambda_{1,j}).$

If a complex representation of $S(U(\nu))$ arises as the extension of a representation of $SU(n),$ then the associated vector bundle is trivial. It follows that 
$\rho(h_p)={n\choose p}$. So $\rho$ defines 
a ring homomorphism $\bar\rho: RS(U(\nu))/\mathcal I\to K^0(\mathbb CG(\nu))$ 
where $\mathcal I\subset RS(U(\nu))$ is the ideal in $\langle h_p-{n\choose p}\mid 1\le p\le n-1 \rangle.$ 
The above theorem is equivalent to the assertion that $\bar\rho$ is an isomorphism.

\end{remark}

As a corollary, we obtain the following.

\begin{lemma}\label{k-flag-generatedbyconjuagatebundles}
$K(\mathbb CG(\nu))$ is generated as an abelian group by the classes of 
$\sigma$-conjugate complex vector bundles.
\end{lemma}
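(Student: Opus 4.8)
The plan is to read this off directly from Theorem \ref{k-theory-flag-manifold}, combined with the formal closure properties of $\sigma$-conjugate bundles recorded earlier in Section 2.

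First I would note that Theorem \ref{k-theory-flag-manifold} gives $K^1(\mathbb{CG}(\nu))=0$ and presents $K^0(\mathbb{CG}(\nu))$ as a commutative ring generated (with unit) by the canonical generators $\lambda_{p,j}=[\Lambda^p(\gamma_j)]$, $1\le p\le n_j$, $1\le j\le s$. Hence, as an abelian group, $K^0(\mathbb{CG}(\nu))$ is spanned by the monomials $\prod_{p,j}\lambda_{p,j}^{a_{p,j}}$ (the empty monomial being $1=[\epsilon_\mathbb{C}]$), and each such monomial is the $K$-theory class of the single complex vector bundle $\bigotimes_{p,j}\big(\Lambda^p(\gamma_j)\big)^{\otimes a_{p,j}}$. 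It therefore suffices to put a $\sigma$-conjugate structure on each of these tensor products.

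Next I would check that each $\gamma_j$ is a $\sigma$-conjugate bundle: fibrewise complex conjugation $L_j\to\bar L_j$, $v\mapsto\bar v$, is a conjugate-complex-linear bundle involution of $\gamma_j$ covering $\sigma:\mathbb{CG}(\nu)\to\mathbb{CG}(\nu)$ (and $\epsilon_\mathbb{C}$ is $\sigma$-conjugate via $(x,z)\mapsto(\sigma(x),\bar z)$). As recalled in the discussion of $\sigma$-conjugate bundles in Section 2, the class of $\sigma$-conjugate complex bundles is closed under Whitney sums, tensor products and exterior powers, provided all the bundle involutions involved cover the same involution of the base. Since the conjugations on the various $\gamma_j$ (and on $\epsilon_\mathbb{C}$) all cover the one involution $\sigma$, the bundle $\bigotimes_{p,j}\big(\Lambda^p(\gamma_j)\big)^{\otimes a_{p,j}}$ inherits a $\sigma$-conjugate structure. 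Together with the previous paragraph this shows that $K(\mathbb{CG}(\nu))=K^0(\mathbb{CG}(\nu))$ is generated as an abelian group by classes of $\sigma$-conjugate bundles.

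There is no genuine obstacle here; the one point needing care is the assertion that a conjugate-complex-linear bundle involution induces conjugate-complex-linear involutions on exterior powers and tensor products. This is a routine check (the operators $\Lambda^p(T)$ and $T_1\otimes T_2$ built from conjugate-linear maps are again conjugate-linear, and are involutions when the $T_i$ are), and it is exactly the functoriality of the $\alpha$-construction alluded to in Section 2 and implicit in Remark \ref{all-bundles-conjugate}, where each $\Lambda^p(\gamma_j)$ is realised from a representation of $S(U(\nu))$.
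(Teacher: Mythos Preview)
Your proof is correct and follows essentially the same approach as the paper: use Theorem \ref{k-theory-flag-manifold} to write $K^0(\mathbb CG(\nu))$ as generated by the $[\Lambda^p(\gamma_j)]$, then invoke the closure of $\sigma$-conjugate bundles under tensor products and exterior powers together with the fact that each $\gamma_j$ is $\sigma$-conjugate. The paper's version is simply terser, citing \cite{nath-sankaran} for the closure properties rather than spelling them out.
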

\begin{proof}
  We need only observe that if $\xi$ and $\eta$ are $\sigma$-conjugate complex vector bundles so are $\xi\otimes\eta,\Lambda^q(\xi), q\ge 0, $ and $\xi\otimes \eta.$ 
  (See \cite[Example 2.2(iv), Lemma 2.3(ii)]{nath-sankaran}.)
  Since the canonical bundles $\gamma_j,1\le j\le s,$ are all $\sigma$-conjugate bundles,
  the lemma follows from the above theorem. 
\end{proof}

Since $\pi=\pi\circ \theta,$ we have $\theta^!\circ \pi^!=(\pi \circ \theta)^!=\pi^!.$ 
So, for any complex vector bundle $\omega$ 
over $P(m,\nu),$
$\pi^!(\omega)$ is a complex vector bundle over $\mathbb S^m\times \mathbb CG(\nu)$ such that $\theta^!(\pi^!(\omega))=\pi^!(\omega)$.  This implies that the image of $\pi^!:K^0(P(m,\nu))\to K^0(\mathbb S^m\times \mathbb CG(\nu))$ is contained in the fixed subring $\fix(\theta^!)$.

Since $K^1(\mathbb CG(\nu))=0$, we have isomorphisms
given by the exterior tensor products of vector bundles. 
See \cite[Corollary 2.7.15.]{atiyah}, \cite[Prop. 3.24, Chapter IV]{karoubi}.
\begin{equation} K^0(\mathbb S^m\times \mathbb CG(\nu))=K^0(\mathbb S^m)\otimes K^0(\mathbb CG(\nu)),
\end{equation}
and,
\begin{equation} K^1(\mathbb S^m\times \mathbb CG(\nu))=K^1(\mathbb S^m)\otimes K^0(\mathbb CG(\nu)).
\end{equation}

Also, if $m$ is even $K^1(\mathbb S^m)=0$ and if $m$ is odd, $\widetilde{K}(\mathbb S^m)=0$.

We shall identify $ y\in K^0(\mathbb CG(\nu))$ with $pr_2^!(y)=1\otimes y\in K^0(\mathbb S^m\times \mathbb CG(\nu))$ and $x\in K^0(\mathbb S^m)$ with 
$x\otimes 1=pr_1^!(x)$ where $pr_i$ denotes the projection to the $i$th factor of 
$\mathbb S^m\times \mathbb CG(\nu)$. Thus $x\otimes y$ is identified with $xy\in K^0(\mathbb S^m\times \mathbb CG(\nu)).$   A similar notation 
holds for the $K^0(\mathbb S^m\times \mathbb CG(\nu))$-module 
$K^1(\mathbb S^m\times \mathbb CG(\nu)).$

Let $m=2r$ be even. By 
Theorem \ref{half-spin-bundles on even spheres}, the element $[\xi^+]-2^{r-1}=2^{r-1}-[\xi^-]$ generates $\ker(\mathrm{rank}:K^0(\mathbb S^m)\to\mathbb Z)\cong \mathbb Z$.

\begin{definition}\label{delta-0}
Let $m\ge 0$.  We set \[\delta_m:=\begin{cases}
    [\xi^+]-2^{r-1}=2^{r-1}-[\xi^-]& ~\textrm{if~} m~\equiv 0\pmod 2\\
    0& ~\textrm{if~} m~\equiv 1\pmod 2.\\
\end{cases}
\]
  \end{definition}

We have $\theta^!(\delta_m)=\alpha^!(\delta_m)=-\delta_m$. (This follows from the naturality of the Chern character.)

For any $x\in K^0(\mathbb CG(\nu))$, $\theta^!(x)=\theta^!(1\otimes x)=\sigma^!(x)=\bar x.$

\begin{lemma} \label{fix-k-ring-pmnu}  Let $m\ge 1$ be arbitrary. 
Let $\mathcal K$ be the subring of $K^0(\mathbb S^m\times \mathbb CG(\nu))$ generated by the elements (i)  $x+\bar x$ and, 
    (ii) $\delta_m(x-\bar x)$ when $m$ is even, 
    as $x$ varies in $K^0(\mathbb CG(\nu)).$
Then the subring $\fix(\theta^!)\subset K^0(\mathbb S^m\times \mathbb CG(\nu))$ contains $\mathcal K.$ 
    Moreover, if $z\in \fix(\theta^!)$, then $2z\in  \mathcal K.$ Thus 
    \[   2\fix(\theta^!)\subset \mathcal K\subset \fix(\theta^!).  \]
\end{lemma}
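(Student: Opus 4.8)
The plan is to work entirely inside the K\"unneth decomposition $K^0(\mathbb{S}^m\times \mathbb{C}G(\nu))\cong K^0(\mathbb{S}^m)\otimes_{\mathbb{Z}}K^0(\mathbb{C}G(\nu))$, which is available because $K^1(\mathbb{C}G(\nu))=0$ (Theorem~\ref{k-theory-flag-manifold}), and under which $\theta^!=(\alpha\times\sigma)^!$ corresponds to $\alpha^!\otimes\sigma^!$ by naturality of the external tensor product. Granting this, the inclusion $\mathcal{K}\subseteq\fix(\theta^!)$ is the easy half: one checks $\theta^!(x+\bar x)=\bar x+x=x+\bar x$, and, when $m$ is even, $\theta^!\bigl(\delta_m(x-\bar x)\bigr)=(-\delta_m)(\bar x-x)=\delta_m(x-\bar x)$, using $\theta^!(\delta_m)=-\delta_m$ and $\theta^!(x)=\bar x$ for $x\in K^0(\mathbb{C}G(\nu))$. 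Since $\fix(\theta^!)$ is a subring containing every listed generator of $\mathcal{K}$, it contains $\mathcal{K}$.

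For the inclusion $2\fix(\theta^!)\subseteq\mathcal{K}$ I would first make $\fix(\theta^!)$ explicit, splitting on the parity of $m$. If $m$ is odd, then $\delta_m=0$ and $\widetilde{K}^0(\mathbb{S}^m)=0$, so $K^0(\mathbb{S}^m\times\mathbb{C}G(\nu))=K^0(\mathbb{C}G(\nu))$ with $\theta^!=\sigma^!$, hence $\fix(\theta^!)=\fix(\sigma^!)$. If $m=2r$ is even, Theorem~\ref{half-spin-bundles on even spheres} gives $K^0(\mathbb{S}^m)=\mathbb{Z}\cdot 1\oplus\mathbb{Z}\,\delta_m$ as a free abelian group, with $\alpha^!$ acting as $+1$ on $\mathbb{Z}\cdot 1$ and as $-1$ on $\mathbb{Z}\,\delta_m$. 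Tensoring with $K^0(\mathbb{C}G(\nu))$ (which is free abelian), every element is uniquely $z=a+\delta_m b$ with $a,b\in K^0(\mathbb{C}G(\nu))$; since $\theta^!$ preserves this splitting and acts as $\sigma^!$ on the first summand and as $-\sigma^!$ on the second, we get $z\in\fix(\theta^!)$ if and only if $\bar a=a$ and $\bar b=-b$.

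It then remains to verify $2z\in\mathcal{K}$ term by term. In the odd case $z=\bar z$ gives $2z=z+\bar z\in\mathcal{K}$ directly. In the even case $2a=a+\bar a\in\mathcal{K}$ (the generator with $x=a$), while $\bar b=-b$ yields $2\delta_m b=\delta_m(2b)=\delta_m(b-\bar b)\in\mathcal{K}$ (the generator with $x=b$), so $2z=2a+2\delta_m b\in\mathcal{K}$. Combining the two inclusions gives $2\fix(\theta^!)\subseteq\mathcal{K}\subseteq\fix(\theta^!)$, as claimed.

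There is no genuine obstacle here: the argument is bookkeeping once the two auxiliary computations of $K^0(\mathbb{S}^m)$ and $K^0(\mathbb{C}G(\nu))$ are in hand. The only points deserving care are the naturality step identifying $\theta^!$ with $\alpha^!\otimes\sigma^!$ on the K\"unneth product, and, in the even case, pinning down $\fix(\theta^!)$ as $\fix(\sigma^!)\oplus\delta_m\cdot\{\,b:\bar b=-b\,\}$; both follow routinely from Theorems~\ref{half-spin-bundles on even spheres} and~\ref{k-theory-flag-manifold}.
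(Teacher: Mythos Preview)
Your proof is correct and follows essentially the same route as the paper's. The only stylistic difference is that the paper, instead of first characterizing $\fix(\theta^!)$ explicitly, writes $z=z_0+\delta_m z_1$ and uses the single identity $2z=z+\theta^!(z)=(z_0+\bar z_0)+\delta_m(z_1-\bar z_1)\in\mathcal K$ directly; this is exactly your computation with the intermediate step of pinning down $\bar a=a$, $\bar b=-b$ folded in.
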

\begin{proof}
    It is evident that, for any $x\in K^0(\mathbb CG(\nu))$, $x+\bar x\in \fix(\theta^!)$.  
    
    When $m$ is even, 
    $\theta^!(\delta_m(x-\bar x))=\alpha^!(\delta_m)(\bar x-x)=-\delta_m(\bar x-x)=\delta_m(x-\bar x)$ and so $\delta_m(x-\bar x)\in \fix(\theta^!).$   
    
    This shows that $\mathcal K\subset \fix(\theta^!)$ for any $m\ge 1.$

    Let $z\in \fix(\theta^!)$ where we assume that $\textrm{rank}(z)=0.$ 
   Write $z=z_0+\delta_mz_1$ where $z_i\in \widetilde{K}^0(\mathbb CG(\nu))$ (recall that  
   $\delta_m=0$ if $m$ is odd). 
   Then $z=\theta^!(z)=\bar z_0-\delta_mz_1$ and so $2z= z+\theta^!(z)
 =z_0+\bar z_0+\delta_m(z_1-\bar z_1)\in \mathcal K.$  
   \end{proof}

    \begin{remark} (i). If $y\in K^0(\mathbb CG(\nu)),$ then, $y^2+\bar y^2, (y+\bar y)^2\in \mathcal K.$  Therefore 
    $2y\bar y=(y+\bar y)^2-y^2-\bar y^2\in \mathcal K. $\\
    (ii)
Since $\bar\lambda_{n_j,j}=\lambda_{n_j,j}^{-1}=\prod_{i\ne j}\lambda_{n_i,i},$ we may 
    write any $x\in K^0(\mathbb CG(\nu))$ as $x=P(\lambda_{p,j})$ as a polynomial in the canonical 
    generators $\lambda_{p,j}, 1\le p\le n_j, 1\le j\le s,$ 
    of $K^0(\mathbb CG(\nu)).$   In particular, 
    the exponents of $\lambda_{n_j,j}, 1\le j\le s, $ that occur in $P(\lambda_{p,j})$ is non-negative for each $j.$   Furthermore,  
    we may assume that, in each monomial, at least one of the $\lambda_{n_j,j}$ does not occur.  This is possible since $\prod_{1\le j\le s} \lambda_{n_j,j}=1$. 
   \end{remark}

Recall from \S\ref{bott-bundle} that, when $m\equiv 0\pmod 2,$  
the complex vector bundles $\xi^+, \xi^-$ over $\mathbb S^m$ are associated to the half-spin representations, and the bundle $\eta^-$ is defined as the pull-back $\alpha^!(\xi^+)$. 
Also we obtained bundle maps $\tilde \alpha^-:\eta^-\to \xi^+$ and $\tilde \alpha^+:\xi^+:\xi^+\to \eta^-$ both covering $\alpha$ such that $\tilde\alpha^-\circ\tilde\alpha^+$ and $\tilde\alpha^+\circ \tilde\alpha^-$ are the identity isomorphisms $id_{\xi^+}$ and $id_{\eta^-}$ respectively.

Let $\tilde \sigma: E(\omega)\to E(\omega)$ be a $\sigma$-conjugate bundle involution that covers $\sigma:\mathbb CG(\nu)\to \mathbb CG(\nu).$
We regard it as a complex linear isomorphism 
$\omega\to \bar \omega$ covering $\sigma.$
We have a complex vector bundle isomorphism $\tilde \theta^+: E(\xi^+\otimes \omega)\to E(\eta^-\otimes \bar\omega)$ 
that covers $\theta.$  Explicitly, $\tilde\theta^+ (e^+\otimes u)=\tilde \alpha^+(e^+)\otimes \tilde \sigma(u)$ and $\tilde\theta^- (e^-\otimes v)=\tilde \alpha^-(e^-)\otimes \tilde \sigma(v).$  The bundle isomorphism $\tilde\theta^-:E(\eta^-\otimes \bar\omega)\to E(\xi^+\otimes \omega)$ covering $\theta$ is defined analogously. 
We have 
\begin{equation}\label{theta+theta-}
\tilde\theta^+\circ \tilde\theta^-=id_{\eta^-\otimes \bar\omega}, \textrm{~and~} \tilde \theta^-\circ \tilde \theta^+=id_{\xi^+\otimes \omega}.
\end{equation}
   
    Set $\tilde\xi(\omega):=\xi^+\otimes \omega\oplus \eta^-\otimes \bar\omega$ for any $\sigma$-conjugate vector bundle $\omega.$

\begin{lemma}\label{spin-omega} 
    With the above notations, we have:\\ (i) For any $m\ge 1$, we have $\pi^!(\hat\omega_\mathbb C)\cong \omega\oplus \bar\omega$.\\
    (ii) Let $m\equiv 0\pmod 2.$  Then 
    there exists a complex vector bundle $\xi^0(\omega)$ over $P(m,\nu)$ such that $\pi^!(\xi^0(\omega))\cong \tilde\xi(\omega)$.
\end{lemma}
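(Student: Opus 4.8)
The plan is to deduce both parts from one elementary principle and, for (ii), to mimic the construction of $\xi^0$ over $\mathbb RP^m$ given in \S\ref{bott-bundle}, now twisted by $\omega$. The principle is the following: if a vector bundle $\tilde p\colon\tilde E\to\tilde B$ (real or complex) carries a fibrewise linear, fixed point free involution $\tilde f$ covering a fixed point free involution of $\tilde B$, then $\tilde E/\langle\tilde f\rangle$ is the total space of a vector bundle over $\tilde B/\langle\tilde f\rangle$ whose pull-back along the quotient covering $q$ is canonically isomorphic to $\tilde E$; indeed, freeness of the involution downstairs makes the quotient map restrict to a fibrewise isomorphism on each fibre of $\tilde E$, and $\tilde b\mapsto(\tilde b,[\,\cdot\,])$ is the desired isomorphism. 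For (i), note that $E(\hat\omega)=(\mathbb S^m\times E(\omega))/\langle\hat\theta\rangle$, where $\hat\theta=\alpha\times\tilde\sigma$ is a fixed point free, fibrewise $\mathbb R$-linear involution of $pr_2^!(\omega)$ covering $\theta$; the principle gives $\pi^!(\hat\omega)\cong pr_2^!(\omega)$ as real bundles. Since pull-back commutes with complexification and $V\otimes_{\mathbb R}\mathbb C\cong V\oplus\bar V$ naturally for a complex vector space $V$, applying this fibrewise yields $\pi^!(\hat\omega_\mathbb{C})\cong pr_2^!(\omega)\oplus pr_2^!(\bar\omega)$, which is the claim, with the customary identification of a bundle over $\mathbb CG(\nu)$ with its pull-back to $\mathbb S^m\times\mathbb CG(\nu)$. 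This also recovers the earlier remark that $\hat\omega_\mathbb{C}$ restricts to $\omega\oplus\bar\omega$ on the fibres of $p$.

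For (ii), with $m=2r$, I would build a fixed point free bundle involution $\tilde\theta$ of $\tilde\xi(\omega)=\xi^+\otimes\omega\oplus\eta^-\otimes\bar\omega$ covering $\theta$, in exact analogy with the construction of $\tilde\alpha$ on $\tilde\xi=\xi^+\oplus\eta^-$ in \S\ref{bott-bundle}. Using the bundle isomorphisms $\tilde\theta^{\pm}$ between $E(\xi^+\otimes\omega)$ and $E(\eta^-\otimes\bar\omega)$ that cover $\theta$, set $\tilde\theta(a,b)=(\tilde\theta^-(b),\tilde\theta^+(a))$ for $a\in E(\xi^+\otimes\omega)$ and $b\in E(\eta^-\otimes\bar\omega)$ over a common point of $\mathbb S^m\times\mathbb CG(\nu)$. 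Then one checks, keeping track of base points, that $\tilde\theta$ covers $\theta$; that $\tilde\theta$ is fibrewise $\mathbb C$-linear, because the bundle maps $\tilde\alpha^{\pm}$ of \S\ref{bott-bundle} are $\mathbb C$-linear and $\tilde\sigma$ is $\mathbb C$-linear as a map $\omega\to\bar\omega$ (equivalently, conjugate complex linear as a self-map of $\omega$), so each $\tilde\theta^{\pm}=\tilde\alpha^{\pm}\otimes\tilde\sigma$ is $\mathbb C$-linear; and that $\tilde\theta\circ\tilde\theta=\mathrm{id}$ by Equation \ref{theta+theta-}. Since $\theta$ is fixed point free, so is $\tilde\theta$, and hence $E(\xi^0(\omega)):=E(\tilde\xi(\omega))/\langle\tilde\theta\rangle$ is the total space of a complex vector bundle $\xi^0(\omega)$ over $(\mathbb S^m\times\mathbb CG(\nu))/\langle\theta\rangle=P(m,\nu)$. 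Applying the principle of the first paragraph to $(\tilde E,\tilde f)=(\tilde\xi(\omega),\tilde\theta)$ then gives $\pi^!(\xi^0(\omega))\cong\tilde\xi(\omega)$, completing the proof.

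I do not expect a genuine obstacle here: the entire content is the bookkeeping in (ii), and the single substantive point is the observation that $\tilde\sigma$ being conjugate complex linear is precisely what forces $\tilde\theta$ to be $\mathbb C$-linear, so that $\xi^0(\omega)$ is an honest complex vector bundle rather than merely a real one. It is also worth noting that the relations $\tilde\theta^+\circ\tilde\theta^-=\mathrm{id}$ and $\tilde\theta^-\circ\tilde\theta^+=\mathrm{id}$ of Equation \ref{theta+theta-} are exactly what enter the involution check, just as $\tilde\alpha^+\circ\tilde\alpha^-=\mathrm{id}$, $\tilde\alpha^-\circ\tilde\alpha^+=\mathrm{id}$ did for $\tilde\alpha$ in \S\ref{bott-bundle}.
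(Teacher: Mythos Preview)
Your proposal is correct and follows essentially the same approach as the paper: for (i) you reduce to $\pi^!(\hat\omega)\cong\omega_{\mathbb R}$ via the quotient-by-free-involution principle and then complexify, and for (ii) you define the involution $\tilde\theta(a,b)=(\tilde\theta^-(b),\tilde\theta^+(a))$ on $\tilde\xi(\omega)$ and take the quotient, exactly as the paper does. The paper records the argument via commuting diagrams rather than stating your ``principle'' abstractly, and it does not spell out the $\mathbb C$-linearity check you include, but the underlying construction is identical.
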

\begin{proof}  We have identified $\omega$ with the bundle $pr_2^!(\omega)$ on $\mathbb S^m\times \mathbb CG(\nu)$ in the statement and will do so in the proof as well. 

(i) We shall prove the stronger statement that 
$\pi^!(\hat\omega)\cong \omega_\mathbb R,$ the real vector bundle underlying $\omega$.
Let $\hat\sigma$ be the $\sigma$-conjugate vector bundle morphism of $\omega$ that 
covers $\sigma:\mathbb CG(\nu)\to \mathbb CG(\nu).$
Recall that the total space $E(\hat\omega)=P(\mathbb S^m, E(\omega))$ of $\hat\omega$ is obtained as the quotient of 
$\mathbb S^m \times E(\omega)$ where $(v,e)$ is identified to $(-v, \hat\sigma(e)).$  
The vector bundle projection $p_{\hat\omega}:E(\hat\omega)\to P(m,\nu)$ maps $[v, e]$ to $[v,p_\omega(e)].$  We have a commuting diagram 
where the horizontal maps are quotient maps and the vertical maps are bundle projections:
\begin{equation}
\begin{tikzcd}[ampersand replacement=\&]
	{\mathbb S^m\times E(\omega)} \& {E(\hat\omega)} \\
	{\mathbb S^m\times \mathbb CG(\nu)} \& {P(m,\nu)}
	\arrow["{\hat \pi}", from=1-1, to=1-2]
	\arrow["{id\times p_{\omega}}"', shift right=2, from=1-1, to=2-1]
	\arrow["{p_{\hat\omega}}", from=1-2, to=2-2]
	\arrow["\pi", from=2-1, to=2-2]
\end{tikzcd}   
\end{equation}

Since $\hat\pi$ is an $\mathbb R$-linear isomorphism on each fibre $v\times E_x(\omega)$,  it follows that $\pi^!(\hat \omega)\cong\omega_\mathbb R.$ 

(ii) We have a bundle involution $\tilde \theta:E(\tilde\xi(\omega))\to E(\tilde\xi(\omega))$ that covers $\theta$ defined as:
\[\tilde \theta (e^+\otimes u,e^-\otimes v)=(\tilde\theta^-(e^-\otimes v), \tilde\theta^+(e^+\otimes u)),
\]
for $e^+\otimes u\in E_b(\xi^+\otimes \omega), e^-\otimes v\in E_b(\eta^-\otimes \bar\omega)$, 
$b\in \mathbb S^m\times \mathbb CG(\nu)$.  Note that $\tilde\theta$ is fixed point free. So 
we obtain a vector bundle $\xi^0(\omega)$ over $P(m,\nu)$, whose total space is $E(\tilde\xi(\omega))/\langle \tilde \theta\rangle$ and we have the following commuting 
diagram in which the double covering projection $\tilde \pi$  
is a bundle map:
\begin{equation}
\begin{tikzcd}[ampersand replacement=\&]
	{E(\tilde\xi(\omega))} \& {E(\xi^0(\omega))} \\
	{\mathbb S^m\times \mathbb CG(\nu)} \& {P(m,\nu)}
	\arrow["{\tilde \pi}", from=1-1, to=1-2]
	\arrow["{\tilde p}"', from=1-1, to=2-1]
	\arrow["\pi", from=2-1, to=2-2]
	\arrow["{p}", from=1-2, to=2-2]
\end{tikzcd}
\end{equation}
Therefore $\pi^!(\xi^0(\omega))=\tilde\xi(\omega).$ 
\end{proof}

It is not clear whether there is a {\em unique} bundle (up to isomorphism) $\xi^0(\omega)$ such that $\pi^!(\xi^0(\omega))\cong \tilde\xi(\omega)$. (However, $\xi_\mathbb C\otimes \xi^0(\omega)\cong \xi^0(\omega)$; see 
Lemma \ref{multiplication-k}(ii) below.) 
In the sequel, $\xi^0(\omega)$ will always denote the bundle as constructed in the above 
proof. 
The following lemma implies, that the class of any other such bundle differs from $[\xi^0(\omega)]$
by a torsion element in $K^0(P(m,\nu))$.

\begin{lemma} \label{k-subring-image}  Let $m\ge 1$ be arbitrary.  
With notations as in Lemma \ref{fix-k-ring-pmnu},\\ $\ker{\pi^!:K^0(P(m,\nu))\to K^0(\mathbb S^m\times \mathbb CG(\nu))}$ is precisely the torsion subgroup of $K^0(P(m,\nu))$ and $Im(\pi^!)$ contains $\mathcal K.$
\end{lemma}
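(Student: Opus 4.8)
The statement has two parts, and I would treat them separately. For the first part --- that $\ker(\pi^!)$ equals the torsion subgroup of $K^0(P(m,\nu))$ --- the plan is to show that $\pi^!$ is injective rationally and that its target is torsion-free. Rational injectivity follows from naturality of the Chern character: since $\mathrm{ch}\colon K^0(-)\otimes\mathbb Q\to H^{\mathrm{ev}}(-;\mathbb Q)$ is an isomorphism for both $P(m,\nu)$ and $\mathbb S^m\times\mathbb CG(\nu)$, and $\mathrm{ch}\circ\pi^!=\pi^*\circ\mathrm{ch}$, it is enough that $\pi^*$ is injective on rational even cohomology; this is the cohomology form of Proposition \ref{nooddtorsion}(i) (cf. Remark \ref{orientability}(i)), or simply the fact that $H^*(P(m,\nu);\mathbb Q)$ is the $\langle\theta\rangle$-fixed subspace of $H^*(\mathbb S^m\times\mathbb CG(\nu);\mathbb Q)$. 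Torsion-freeness of the target is immediate from $K^0(\mathbb S^m\times\mathbb CG(\nu))\cong K^0(\mathbb S^m)\otimes K^0(\mathbb CG(\nu))$ (valid since $K^1(\mathbb CG(\nu))=0$), together with the fact that each factor is free abelian --- for $\mathbb CG(\nu)$ because $H^*(\mathbb CG(\nu);\mathbb Z)$ is torsion-free, whence so is the finitely generated group $K^0(\mathbb CG(\nu))$. Consequently every torsion element of $K^0(P(m,\nu))$ dies under $\pi^!$, and conversely $\ker(\pi^!)$ consists of torsion; the two inclusions give the claim.

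For the second part --- $\mathcal K\subseteq\mathrm{Im}(\pi^!)$ --- I would use that $\pi^!$ is a ring homomorphism, so $\mathrm{Im}(\pi^!)$ is a subring, and it then suffices to put each generator of $\mathcal K$ into it, namely $x+\bar x$ for $x\in K^0(\mathbb CG(\nu))$ and, when $m=2r$ is even, $\delta_m(x-\bar x)$. By Lemma \ref{k-flag-generatedbyconjuagatebundles} every $x$ is a $\mathbb Z$-linear combination of classes $[\omega_i]$ of $\sigma$-conjugate bundles, and since $x\mapsto\bar x$ and multiplication by $\delta_m$ are both additive, I can reduce to $x=[\omega]$ with $\omega$ a single $\sigma$-conjugate bundle. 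For $x+\bar x$ this is exactly Lemma \ref{spin-omega}(i): $\pi^!([\hat\omega_{\mathbb C}])=[\omega]+[\bar\omega]=x+\bar x$. For $\delta_m(x-\bar x)$ I would start from Lemma \ref{spin-omega}(ii), which gives $\pi^!([\xi^0(\omega)])=[\tilde\xi(\omega)]=[\xi^+]\,[\omega]+[\eta^-]\,[\bar\omega]$ in $K^0(\mathbb S^m\times\mathbb CG(\nu))$; substituting $[\xi^+]=2^{r-1}+\delta_m$ and $[\eta^-]=2^{r-1}-\delta_m$ from Remark \ref{eta-xi} and Definition \ref{delta-0} and collecting terms gives
\[
[\tilde\xi(\omega)]=\delta_m\bigl([\omega]-[\bar\omega]\bigr)+2^{r-1}\bigl([\omega]+[\bar\omega]\bigr),
\]
so, using Lemma \ref{spin-omega}(i) once more, $\delta_m([\omega]-[\bar\omega])=\pi^!\bigl([\xi^0(\omega)]-2^{r-1}[\hat\omega_{\mathbb C}]\bigr)\in\mathrm{Im}(\pi^!)$. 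Re-assembling with the coefficients $a_i$ then shows $\delta_m(x-\bar x)\in\mathrm{Im}(\pi^!)$ for all $x$, and hence $\mathcal K\subseteq\mathrm{Im}(\pi^!)$.

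I do not expect a genuine obstacle here: the lemma is in essence a bookkeeping consequence of the bundle constructions of Lemma \ref{spin-omega} and the generation statement of Lemma \ref{k-flag-generatedbyconjuagatebundles}. The only place that needs a little care is the displayed algebraic identity above --- in particular, recognising that the ``leftover'' term $2^{r-1}([\omega]+[\bar\omega])$ is itself in $\mathrm{Im}(\pi^!)$ (again by Lemma \ref{spin-omega}(i)), so that $\delta_m([\omega]-[\bar\omega])$ can be isolated as $\pi^!$ of an honest virtual bundle. I would also note that when $m$ is odd the generator $\delta_m(x-\bar x)$ is vacuous since $\delta_m=0$, so only part (i) of Lemma \ref{spin-omega} is used there, consistently with the hypothesis ``$m\ge 1$ arbitrary''.
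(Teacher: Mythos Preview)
Your proof is correct and follows essentially the same route as the paper's: the kernel statement is obtained via naturality of the Chern character together with injectivity of $\pi^*$ on rational cohomology (Proposition \ref{nooddtorsion}), and the image statement is reduced to the generators of $\mathcal K$ via Lemma \ref{k-flag-generatedbyconjuagatebundles} and then handled by the bundle constructions of Lemma \ref{spin-omega}, with the same algebraic identity $[\tilde\xi(\omega)]=\delta_m([\omega]-[\bar\omega])+2^{r-1}([\omega]+[\bar\omega])$ isolating $\delta_m([\omega]-[\bar\omega])$. The only cosmetic difference is that the paper writes a general $x\in\widetilde K(\mathbb CG(\nu))$ as $[\omega]-d$ for a single $\sigma$-conjugate bundle $\omega$, whereas you use a $\mathbb Z$-linear combination of such classes; both are valid consequences of Lemma \ref{k-flag-generatedbyconjuagatebundles}.
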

\begin{proof}  
By the naturality of the Chern character, we have the following commutative diagram:
\begin{equation}
    \begin{tikzcd}[ampersand replacement=\&]
	{K^0(P(m,\nu))} \& {K^0(\mathbb S^m\times \mathbb CG(\nu))} \\
	{H^*(P(m,\nu);\mathbb Q)} \& {H^*(\mathbb S^m\times \mathbb CG(\nu);\mathbb Q)}
	\arrow["{\pi^!}", from=1-1, to=1-2]
	\arrow["{\textrm{ch}}"', from=1-1, to=2-1]
	\arrow["{\pi^*}", from=2-1, to=2-2]
	\arrow["{\textrm{ch}}", from=1-2, to=2-2]
\end{tikzcd}
\end{equation}

Since $\textrm{ch}:K^0(\mathbb S^m\times \mathbb CG(\nu))\to H^*(\mathbb S^m\times \mathbb CG(\nu);\mathbb Q)$ is a monomorphism, and since $\pi^*:H^*(P(m,\nu);\mathbb Q)
\to H^*(\mathbb S^m\times \mathbb CG(\nu);\mathbb Q)$ is a monomorphism,
$\ker \pi^!$ equals the kernel of 
$\textrm{ch}: K^0(P(m,\nu))\to H^*(P(m,\nu);\mathbb Q),$ which is precisely 
the torsion subgroup of $K^0(P(m,\nu)).$ 

It remains to show that $\mathcal K\subset Im(\pi^!).$  Recall that, by definition,
$\delta_m=0$ when $m$ is odd.
Since $\mathcal K$ is generated as a 
subring by $x+\bar x, \delta_m(x-\bar x)$ where $x$ varies in $ K^0(\mathbb CG(\nu)),$ we 
need only to show that $x+\bar x$ and $\delta_m(x-\bar x)$ are in $Im(\pi^!).$

By Lemma \ref{k-flag-generatedbyconjuagatebundles}, any $x\in \widetilde{K}(\mathbb CG(\nu))$ 
may be expressed $[\omega]-d$ where $\omega$ is a $\sigma$-conjugate complex vector bundle and 
$d=\textrm{rank}(\omega).$ Thus $\theta^!(x)=\sigma^!(x)=[\bar\omega]-d=\bar x.$ Consider the complex vector bundle $\hat\omega_\mathbb C=\hat\omega\otimes_\mathbb R \mathbb C,$ the complexification of $\hat\omega.$   We have $\pi^!([\hat\omega_\mathbb C]-2d)=[\omega]+[\bar\omega]-2d=x+\bar x.$  Thus $x+\bar x\in Im(\pi^!)$. This proves 
that $\mathcal K\subset Im(\pi^!)$ when $m$ is odd.

Let $m\equiv 0\pmod 2.$
Recall from Remark \ref{eta-xi} that $[\xi^-]=[\eta^-]$ in $K(\mathbb S^m)$ and so 
$\delta_m=[\xi^+]-2^{r-1}=-(
[\eta^-]-2^{r-1})$ 
in 
$K^0(\mathbb S^m\times \mathbb CG(\nu)).$  Therefore 
$[\tilde\xi(\omega)]=[\xi^+][\omega]+[\eta^-][\bar\omega]=
\delta_m([\omega]- [\bar\omega])
+2^{r-1}([\omega]+[\bar \omega])$.  By Lemma \ref{spin-omega}, 
$[\tilde\xi(\omega)]\in Im(\pi^!)$.  It follows that $\delta_m(x-\bar x)=\delta_m\cdot 
([\omega]-[\bar\omega])
\in Im(\pi^!)$ since 
$[\omega]+[\bar\omega]\in Im(\pi^!).$  Hence $\mathcal K\subset Im(\pi^!).$
\end{proof}

Recall that $\xi_\mathbb C$ denotes the complexification of the Hopf line bundle
$\xi$ over $\mathbb RP^m.$  We shall denote by the same symbol $\xi$ (resp. $\xi_\mathbb C$) the bundle $p^!(\xi)$ (resp.$p^!(\xi_\mathbb C)$) over 
$P(m,\nu)$.

\begin{definition} Let $m\ge 1$. 
 With notations as above, 
define $\mathcal K^0\subset K^0(P(m,\nu))$ to be the subring generated by the following 
elements:\\
(i) $[\xi_\mathbb C],$ 
$[\hat\omega_\mathbb C]$, and, (ii) when $m$ is even,  $[\xi^0(\omega)],$\\
where $\omega$ varies over $\sigma$-conjugate complex vector bundles over $\mathbb CG(\nu)$.
\end{definition}

We have the following proposition which gives partial information on the multiplicative structure of $K^0(P(m,\nu)).$

\begin{proposition} \label{multiplication-k} Let $m\ge 1$. The following relations hold in $K^0(P(m,\nu))$ for any $\sigma$-conjugate vector bundles $\omega_1,\omega_2$ and $\omega$ over $\mathbb CG(\nu)$.\\
    (i)
    $([\xi_\mathbb C]-1)[\hat\omega_\mathbb C]=0$, and, The (additive) order of $([\xi_\mathbb C]-1)$ equals $2^{\lfloor m/2\rfloor}$.\\ 
    Furthermore, when $m$ is even, we have\\
    (ii) $([\xi_\mathbb C]-1)[\xi^0(\omega)]=0$, $[\xi^0(\omega_1)]+[\xi^0(\omega_2)]
    =[\xi^0(\omega_1\oplus \omega_2)]$;\\
    (iii)  
    $[\xi^0(\omega)]+[\xi^0(\bar \omega)]\equiv 2^r[\hat\omega_\mathbb C]$ modulo torsion; \\
    (iv) 
    $[\xi^0(\omega_1)][\xi^0(\omega_2)]\equiv 2^r[\xi^0(\omega_1\otimes \omega_2)]-2^{2r-2}([\omega_1]-[\bar\omega_1])([\omega_2]-[\bar\omega_2])$ modulo torsion.    
\end{proposition}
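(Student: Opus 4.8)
The plan is to separate the four assertions according to whether they are exact identities in $K^0(P(m,\nu))$ or only congruences modulo torsion. The exact statements --- the two vanishing claims and the additivity in (i)--(ii) --- I would prove by producing explicit isomorphisms of the bundles involved, working on the double cover $\pi\colon\mathbb S^m\times\mathbb CG(\nu)\to P(m,\nu)$. The three congruences modulo torsion I would obtain by applying $\pi^!$ and invoking Lemma \ref{k-subring-image} (which identifies $\ker\pi^!$ with the torsion subgroup), Lemma \ref{spin-omega}, and the ring $K^0(\mathbb S^m)$ computed in Theorem \ref{half-spin-bundles on even spheres}. Throughout $\omega,\omega_1,\omega_2$ are $\sigma$-conjugate complex vector bundles, and $m=2r$ whenever $m$ is even.

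For (i): the claim that $[\xi_\mathbb C]-1$ has additive order $2^{\lfloor m/2\rfloor}$ is already contained in Theorem \ref{k-groups-pmnu}(i) --- equivalently, $p$ admits a section, so $p^!$ is split injective and Adams' computation $\widetilde K^0(\mathbb RP^m)\cong\mathbb Z_{2^{\lfloor m/2\rfloor}}$ applies. For $([\xi_\mathbb C]-1)[\hat\omega_\mathbb C]=0$ I would establish the bundle isomorphism $\xi\otimes_\mathbb R\hat\omega\cong\hat\omega$, which gives $\xi_\mathbb C\otimes_\mathbb C\hat\omega_\mathbb C=(\xi\otimes_\mathbb R\hat\omega)_\mathbb C\cong\hat\omega_\mathbb C$ and hence $[\xi_\mathbb C][\hat\omega_\mathbb C]=[\hat\omega_\mathbb C]$. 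Here $E(\hat\omega)$ is the quotient of $\mathbb S^m\times E(\omega)$ by $(v,e)\mapsto(-v,\hat\sigma(e))$, and $E(\xi)=(\mathbb S^m\times\mathbb CG(\nu)\times\mathbb R)/\langle(v,L,t)\mapsto(-v,\bar L,-t)\rangle$ since $\xi=p^!(\xi)$; therefore $\xi\otimes_\mathbb R\hat\omega$ is the quotient of $\mathbb S^m\times E(\omega)$ by the twisted involution $(v,e)\mapsto(-v,-\hat\sigma(e))$. Using the complex structure of $\omega$, the fibrewise $\mathbb C$-linear automorphism $(v,e)\mapsto(v,(v_1+i)e)$ covers the identity on $\mathbb S^m\times\mathbb CG(\nu)$ and intertwines the two involutions, because $\hat\sigma$ is conjugate-complex-linear and $-v_1+i=-\overline{v_1+i}$ while $v_1+i$ is nowhere zero; so it descends to the required isomorphism. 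This argument is valid for all $m\ge1$.

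For (ii) (so $m=2r$): the identity $[\xi^0(\omega_1)]+[\xi^0(\omega_2)]=[\xi^0(\omega_1\oplus\omega_2)]$ is immediate from the construction in Lemma \ref{spin-omega}(ii), since $\tilde\xi(\omega_1\oplus\omega_2)\cong\tilde\xi(\omega_1)\oplus\tilde\xi(\omega_2)$ and the bundle involution $\tilde\theta$ for $\omega_1\oplus\omega_2$ respects this splitting, whence already $\xi^0(\omega_1\oplus\omega_2)\cong\xi^0(\omega_1)\oplus\xi^0(\omega_2)$ as bundles. For $([\xi_\mathbb C]-1)[\xi^0(\omega)]=0$ I would prove $\xi_\mathbb C\otimes\xi^0(\omega)\cong\xi^0(\omega)$. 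Since $\pi^!\xi^0(\omega)=\tilde\xi(\omega)=\xi^+\otimes\omega\oplus\eta^-\otimes\bar\omega$ with $\mathbb Z_2$-action $\tilde\theta$, and $\xi_\mathbb C$ is the sign line bundle of the cover, $\xi_\mathbb C\otimes\xi^0(\omega)$ is the quotient of $\tilde\xi(\omega)$ by $-\tilde\theta$; one then needs a fibrewise $\mathbb C$-linear automorphism $\Psi$ of $\tilde\xi(\omega)$ covering the identity with $\Psi\tilde\theta\Psi^{-1}=-\tilde\theta$. The scalar trick from part (i) is obstructed now (there is no odd map $\mathbb S^m\to\mathbb C^{*}$ when $m\ge2$), so instead I take $\Psi$ to be multiplication by $i$ on the summand $\xi^+\otimes\omega$ and by $-i$ on the summand $\eta^-\otimes\bar\omega$. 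Since $\tilde\theta$ interchanges the two summands through the complex-linear maps $\tilde\theta^{\pm}$, a direct computation gives $\Psi\tilde\theta\Psi^{-1}=-\tilde\theta$, so $\Psi$ descends to the required isomorphism (this also establishes the parenthetical remark preceding Lemma \ref{multiplication-k}). The definition of $\tilde\theta$ involves the gluing data $\tilde\alpha^{\pm}$, so this construction and verification of $\Psi$ is the one genuinely computational step, and I expect it to be the main obstacle; everything else here is formal.

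For (iii) and (iv): apply $\pi^!$ and use Lemma \ref{k-subring-image}, so that it suffices to verify both congruences inside $K^0(\mathbb S^m\times\mathbb CG(\nu))=K^0(\mathbb S^m)\otimes K^0(\mathbb CG(\nu))$. Put $a:=[\xi^+]$, so that $[\eta^-]=[\xi^-]=2^r-a$ and, by Theorem \ref{half-spin-bundles on even spheres}, $a^2=2^ra-2^{2r-2}$, $a(2^r-a)=2^{2r-2}$ and $(2^r-a)^2=2^r(2^r-a)-2^{2r-2}$; write $p_i:=[\omega_i]$, $\bar p_i:=[\bar\omega_i]$. By Lemma \ref{spin-omega}, $\pi^!([\xi^0(\omega_i)])=a\,p_i+(2^r-a)\bar p_i$ and $\pi^!([\hat\omega_\mathbb C])=p+\bar p$. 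For (iii), $\pi^!([\xi^0(\omega)]+[\xi^0(\bar\omega)])=\bigl(a+(2^r-a)\bigr)(p+\bar p)=2^r(p+\bar p)=\pi^!\bigl(2^r[\hat\omega_\mathbb C]\bigr)$, which gives the congruence. For (iv), expanding $\bigl(a\,p_1+(2^r-a)\bar p_1\bigr)\bigl(a\,p_2+(2^r-a)\bar p_2\bigr)$ and substituting the three relations, the terms regroup as $2^r$ times $a\,p_1p_2+(2^r-a)\overline{p_1p_2}=\pi^!([\xi^0(\omega_1\otimes\omega_2)])$, minus $2^{2r-2}$ times $p_1p_2-p_1\bar p_2-\bar p_1p_2+\bar p_1\bar p_2=(p_1-\bar p_1)(p_2-\bar p_2)$. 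Since $(p_1-\bar p_1)(p_2-\bar p_2)=\bigl([\omega_1\otimes\omega_2]+[\overline{\omega_1\otimes\omega_2}]\bigr)-\bigl([\omega_1\otimes\bar\omega_2]+[\overline{\omega_1\otimes\bar\omega_2}]\bigr)$ lies in $\mathcal K\subset\mathrm{Im}(\pi^!)$, the expression $([\omega_1]-[\bar\omega_1])([\omega_2]-[\bar\omega_2])$ in the statement is to be read as a fixed $\pi^!$-preimage (for instance $[\widehat{\omega_1\otimes\omega_2}\otimes_\mathbb R\mathbb C]-[\widehat{\omega_1\otimes\bar\omega_2}\otimes_\mathbb R\mathbb C]$), and (iv) then follows modulo torsion from Lemma \ref{k-subring-image}.
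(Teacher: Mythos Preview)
Your proof is correct and follows essentially the same strategy as the paper: exact bundle isomorphisms for (i)--(ii), and pushing through $\pi^!$ using Lemma~\ref{k-subring-image} and the relations in $K^0(\mathbb S^m)$ for (iii)--(iv). The only cosmetic differences are that for (i) the paper simply cites \cite[Lemma~2.7]{nath-sankaran} for $\hat\omega\otimes\xi\cong\hat\omega$ rather than writing down your explicit intertwiner $(v,e)\mapsto(v,(v_1+i)e)$, and for (ii) the paper builds the isomorphism $\xi_\mathbb C\otimes\xi^0(\omega)\cong\xi^0(\omega)$ via the map $\tilde\lambda\colon (b;t)\otimes(b;x,y)\mapsto(b;-tx,ty)$, which after trivializing $\epsilon_\mathbb C$ amounts to the intertwiner $(x,y)\mapsto(-x,y)$ between $-\tilde\theta$ and $\tilde\theta$, in place of your $\Psi\colon(x,y)\mapsto(ix,-iy)$; both choices work for the same reason, namely that $\tilde\theta$ swaps the two summands $\mathbb C$-linearly.
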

\begin{proof}
(i) 
The assertion that $([\xi_\mathbb C]-1)[\hat\omega_\mathbb C]=0$ 
holds since $\hat\omega\otimes \xi\cong \hat \omega$. (See \cite[Lemma 2.7]{nath-sankaran}.) 
That $2^{\lfloor m/2\rfloor}$ equals the additive order of $([\xi_\mathbb C]-1)=0$ follows readily 
from the work of Adams \cite{adams} in view of the fact that the projection $p:P(m,\nu)\to \mathbb RP^m$ induces a monomorphism 
$p^!:K^0(\mathbb RP^m)\to K^0(P(m,\nu))$ since $p$ admits a cross-section. 

(ii)   The second assertion of (ii) is readily seen to be valid.
We shall construct a bundle isomorphism 
$\tilde \lambda:\epsilon_\mathbb C\otimes \tilde\xi(\omega)\to \tilde \xi(\omega)$ that 
covers $\theta:\mathbb S^m\times \mathbb CG(\nu)\to \mathbb S^m\times \mathbb CG(\nu)$
and such that it 
descends to yield a bundle isomorphism 
$\xi_\mathbb C\otimes \xi^0(\omega)\cong \xi^0(\omega)$ on $P(m,\nu)$. This readily 
implies the first assertion of (ii).

We have $E(\xi_\mathbb C)=\mathbb S^m\times \mathbb CG(\nu)\times \mathbb C/\langle \mu\rangle $ where 
$\rho:\epsilon_\mathbb C\to \epsilon_\mathbb C$ is the involutive bundle isomorphism that covers $\theta$ defined as $\rho(b;t)\:=(\theta(b);-t)~\forall b\in \mathbb S^m\times \mathbb CG(\nu), t\in \mathbb C.$ 
We shall denote by $[b;t]\in E_{[b]}(\xi _\mathbb C)$ the image of $(b;t)\in E_b(\epsilon_\mathbb C)$ under bundle map $\epsilon_\mathbb C\to \xi_\mathbb C$ covering the 
projection $\mathbb S^m\times \mathbb CG(\nu)\to P(m,\nu)$. Thus $[b;t]=[\theta(b);-t]$.

 The total space $E(\xi^0(\omega))$ of $\xi^0(\omega)$ was described 
in the course of the proof of Lemma \ref{spin-omega} as $E(\tilde \xi)/\langle \tilde\theta\rangle .$ We shall denote by $(b;x,y)\in E_b(\tilde \xi(\omega))$ the element $(x,y)\in E_b(\xi^+\otimes \omega)\oplus E_b(\eta^-\otimes \bar\omega)$ and 
by $[b;x,y]$ its image in $E_{[b]}(\xi^0(\omega))$ under the projection $E(\tilde \xi(\omega))\to E(\xi^0(\omega))$. 
Thus $[b;x,y]=[\theta(b); \tilde\theta^-(y), \tilde \theta^+(x)].$  

The total space of $\xi_\mathbb C\otimes \xi^0(\omega)$ has the following description:
The fibre over $[b]\in P(m,\nu)$ is the vector space $E_{[b]}(\xi_\mathbb C)\otimes E_{[b]}(\xi^0(\omega)).$  Let $t\in \mathbb C, x\in E_b(\xi^+\otimes \omega), y\in E_b(\eta^-\otimes \bar\omega)$ so that $[b;t]\in E_{[b]}(\xi_\mathbb C), [b;x,y]\in E_{[b]}(\xi^0(\omega)).$
Then the vector $[b;t]\otimes [b;x,y]\in E_{[b]}(\xi_\mathbb C\otimes \xi^0(\omega))=
E_{[b]}(\xi_\mathbb C)\otimes E_{[b]}(\xi^0(\omega))$  
will be denoted $[(b;t)\otimes (x,y)].$  Note that 
\begin{equation}\label{xic-xi0omega}
[(b;t)\otimes (x,y)]=[(\theta(b);-t)\otimes (\tilde \theta^{-}(y),\tilde \theta^+(x))].
\end{equation}

Consider the bundle map 
$\tilde \lambda:E(\epsilon_\mathbb C\otimes \tilde \xi(\omega))\to E(\tilde \xi(\omega))$ defined as follows: For $b\in \mathbb S^m\times \mathbb CG(\nu), t\in \mathbb C, x\in E_b(\xi^+\otimes \omega), y\in E_b(\eta^-\otimes \bar\omega)$, 
\begin{equation}
\tilde \lambda ((b;t)\otimes (b; x,y))=(b; -tx,ty).
\end{equation}
Then $\tilde \lambda$ is a complex vector bundle isomorphism that covers $\theta.$
Moreover, 
\[\begin{array}{rcl} 
\tilde \lambda \circ (\rho\otimes \tilde \theta)((b;t)\otimes (b;x,y))&=&
\tilde\lambda((\theta(b);-t)\otimes(\theta(b);\tilde\theta^-(y),\tilde\theta^+(x)))\\
&=&
(\theta(b);t\tilde\theta^-(y),-t\tilde\theta^+(x))\\
&=&\tilde\theta(b;-tx,ty)\\
&=&\tilde\theta(\tilde\lambda((b;t)\otimes(b;x,y)))
\end{array}
\]

Therefore $\tilde \lambda $ yields a well-defined bundle isomorphism 
$\lambda: \xi_\mathbb C\otimes \xi^0(\omega)\to \xi^0(\omega)$, 
covering the identity 
map of $P(m,\nu)$,
 defined as 
$[(b; t)\otimes (x,y)]\mapsto [b; -tx,ty].$
This shows that $\xi_\mathbb C\otimes \xi^0(\omega)\cong \xi^0(\omega)$.

(iii)  In view of the fact that $\ker(\pi^!)$ equals the torsion subgroup of $K^0(P(m,\nu)),$ 
it suffices to show that $\pi^!([\xi^0(\omega)]+[\xi^0(\bar \omega)])=2^r\pi^!([\hat\omega_\mathbb C]).$
Since $\pi^!(\hat\omega_\mathbb C)=([\omega]+[\bar\omega])$ by Lemma \ref{spin-omega} and since 
$\pi^!(\xi^0(\omega))=\tilde \xi(\omega)$, we need only to show that 
$[\tilde\xi(\omega)]+[\tilde\xi(\bar \omega)]=2^r([\omega]+[\bar\omega]).$
It is clear that $\tilde \xi(\omega)\oplus \tilde\xi(\bar\omega)
=( \xi^+\oplus \eta^-)\otimes (\omega\oplus \bar \omega)\cong 2^r(\omega\oplus \bar\omega)$
since we have a complex vector bundle isomorphism 
$\xi^+\oplus \eta^-\cong \xi^+\oplus \xi^-\cong 2^r\epsilon_\mathbb C$ on $\mathbb S^m$.

(iv) We have 
\[
\begin{array}{rcl}
\pi^!(\xi^0(\omega_1)\otimes \xi^0(\omega_2))
&=&\tilde\xi(\omega_1)\otimes \tilde\xi(\omega_2)\\
&=& (\tilde \xi^+\otimes \omega_1+\eta^{-}\otimes \bar \omega_1)\otimes (\tilde \xi^+\otimes  \omega_2+\eta^{-}\otimes\bar\omega_2)\\
&=&(\xi^+)^2\otimes(\omega_1\otimes \omega_2)
+(\eta^-)^2\otimes(\bar \omega_1\otimes\bar \omega_2)+\xi^+\eta^-\cdot(\omega_1\otimes \bar\omega_2\oplus \bar\omega_1\otimes \omega_2).
\end{array}
\]
Now by Theorem \ref{half-spin-bundles on even spheres} and Remark \ref{eta-xi}, 
$[\xi^+]^2=2^r[\xi^+]-2^{2r-2},[\eta^-]^2=2^r[\eta^-]-2^{2r-2}$ and $[\xi^+\eta^-]=2^{2r-2}.$  Substituting 
in the above equation and using $\tilde \xi(\omega)=\xi^+\otimes \omega\oplus \eta^-\otimes \bar \omega$ (with $\omega=\omega_1\otimes \omega_2$) we obtain: 
\[
\begin{array}{rcl}
\pi^!([\xi^0(\omega_1)\otimes \xi^0(\omega_2)])
&=&2^r([\tilde\xi(\omega_1\otimes \omega_2)])- 2^{2r-2}([\omega_1\otimes \omega_2]+[\bar\omega_1\otimes \bar\omega_2])\\
& &+2^{2r-2}([\omega_1\otimes\bar\omega_2]+[\bar\omega_1\otimes \omega_2])\\
&=&2^r \pi^!([\xi^0(\omega_1\otimes \omega_2)]-2^{2r-2}([\omega_1]-[\bar\omega_1])([\omega_2]-[\bar\omega_2]).\\ 
\end{array}
\]
Since $\ker(\pi^!:K(P(m,\nu))\to K(\mathbb S^m\times \mathbb CG(\nu)))$ consists 
only of torsion elements, our assertion follows. 
\end{proof}

\begin{theorem} \label{kpmnu-cofinite-ring}  Let $m\ge 1$ and let $r=\lfloor m/2 \rfloor$.
The homomorphism $\pi^!:K^0(P(m,\nu))\to K^0(\mathbb S^m\times \mathbb CG(\nu))$  defines a surjective ring homomorphism $\mathcal K^0\to \mathcal K,$ again denoted $\pi^!$,  
whose kernel equals the torsion ideal $\mathcal T^0\subset\mathcal K^0$.
The quotient group $K^0(P(m,\nu))/\mathcal K^0$ is a finite abelian $2$-group. 
\end{theorem}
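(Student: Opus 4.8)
The plan is to derive the theorem formally from Lemmas \ref{fix-k-ring-pmnu}, \ref{spin-omega}, \ref{k-flag-generatedbyconjuagatebundles} and \ref{k-subring-image}, together with Theorem \ref{k-groups-pmnu}; throughout, write $\mathcal T$ for the torsion subgroup of $K^0(P(m,\nu))$, which by that theorem is a finite abelian $2$-group. Since $\pi^!$ is a ring homomorphism on all of $K^0(P(m,\nu))$, its restriction to the subring $\mathcal K^0$ is automatically a ring homomorphism, so the first task is to identify its image with $\mathcal K$. To obtain $\pi^!(\mathcal K^0)\subseteq\mathcal K$ I would evaluate $\pi^!$ on the generators of $\mathcal K^0$: one has $\pi^!([\xi_{\mathbb C}])=1\in\mathcal K$, since $\xi_{\mathbb C}$ is pulled back from $\mathbb RP^m$ along $p$ while $p\circ\pi$ factors through the restriction of the Hopf bundle to $\mathbb S^m$, which is trivial; next $\pi^!([\hat\omega_{\mathbb C}])=[\omega]+[\bar\omega]$ by Lemma \ref{spin-omega}(i); and, when $m$ is even, $\pi^!([\xi^0(\omega)])=[\tilde\xi(\omega)]=\delta_m([\omega]-[\bar\omega])+2^{r-1}([\omega]+[\bar\omega])$, as computed in the proof of Lemma \ref{k-subring-image}. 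Taking $x=[\omega]$, so $\bar x=[\bar\omega]$, each of these lies in $\mathcal K$. For the reverse inclusion I would use Lemma \ref{k-flag-generatedbyconjuagatebundles} to write an arbitrary $x\in K^0(\mathbb CG(\nu))$ as $\sum_i a_i[\omega_i]$ with the $\omega_i$ $\sigma$-conjugate; then $x+\bar x=\pi^!\bigl(\sum_i a_i[(\hat\omega_i)_{\mathbb C}]\bigr)$ and, for $m$ even, $\delta_m(x-\bar x)=\pi^!\bigl(\sum_i a_i\bigl([\xi^0(\omega_i)]-2^{r-1}[(\hat\omega_i)_{\mathbb C}]\bigr)\bigr)$, so every generator of $\mathcal K$ is $\pi^!$ of an element of $\mathcal K^0$. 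Hence $\pi^!$ restricts to a surjective ring homomorphism $\mathcal K^0\twoheadrightarrow\mathcal K$.

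For the kernel, Lemma \ref{k-subring-image} gives $\ker\bigl(\pi^!\colon K^0(P(m,\nu))\to K^0(\mathbb S^m\times\mathbb CG(\nu))\bigr)=\mathcal T$. Therefore $\ker(\pi^!|_{\mathcal K^0})=\mathcal K^0\cap\mathcal T$, which is the torsion subgroup of $\mathcal K^0$; being the kernel of a ring homomorphism it is an ideal, namely the ideal $\mathcal T^0$ of the statement.

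To control the quotient, fix $M$ with $2^M\mathcal T=0$. Given $z\in K^0(P(m,\nu))$, we have $\pi^!(z)\in\fix(\theta^!)$ because $\pi=\pi\circ\theta$, so $2\pi^!(z)\in\mathcal K$ by Lemma \ref{fix-k-ring-pmnu}; by the surjectivity just established, choose $w\in\mathcal K^0$ with $\pi^!(w)=2\pi^!(z)$. Then $2z-w\in\ker\pi^!=\mathcal T$, hence $2^M(2z-w)=0$, i.e.\ $2^{M+1}z=2^Mw\in\mathcal K^0$. Thus $K^0(P(m,\nu))/\mathcal K^0$ is annihilated by $2^{M+1}$; being a quotient of a finitely generated abelian group and of bounded exponent it is finite, and being of $2$-power exponent it is a $2$-group.

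The argument is formal once the cited lemmas are in place; the one step deserving attention is the surjectivity $\mathcal K^0\twoheadrightarrow\mathcal K$, where I must ensure that the particular bundles generating $\mathcal K^0$ already suffice to realise every additive generator $x+\bar x$ (and, for $m$ even, $\delta_m(x-\bar x)$) of $\mathcal K$. This is exactly where Lemma \ref{k-flag-generatedbyconjuagatebundles}---that $K^0(\mathbb CG(\nu))$ is additively generated by classes of $\sigma$-conjugate bundles---enters.
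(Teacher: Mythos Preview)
Your proof is correct and follows essentially the same route as the paper's: compute $\pi^!$ on the generators of $\mathcal K^0$ to see that the image is exactly $\mathcal K$, then invoke Lemma \ref{k-subring-image} for the kernel. The paper is slightly terser about the surjectivity step (it does not explicitly cite Lemma \ref{k-flag-generatedbyconjuagatebundles} or mention $\pi^!([\xi_{\mathbb C}])=1$), but you have correctly identified that this lemma is needed to pass from $\sigma$-conjugate bundles $\omega$ to arbitrary $x\in K^0(\mathbb CG(\nu))$.

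The one genuine difference is in the final step. The paper argues via ranks: since $\pi^!|_{\mathcal K^0}$ surjects onto $\mathcal K$ and $\mathcal K$ has the same rank as $K^0(P(m,\nu))$ (because $2\fix(\theta^!)\subset\mathcal K$ and $\ker\pi^!$ is torsion), the inclusion $\mathcal K^0\hookrightarrow K^0(P(m,\nu))$ has full rank, so the quotient is torsion; the absence of odd torsion in $K^0(P(m,\nu))$ then forces it to be a $2$-group. You instead bound the exponent directly: from $2\pi^!(z)\in\mathcal K$ you deduce $2^{M+1}z\in\mathcal K^0$, where $2^M$ kills $\mathcal T$. Your argument is marginally more informative (it gives an explicit exponent bound on the quotient, not just finiteness), while the paper's rank argument is perhaps more conceptual. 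Both are short and valid.
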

\begin{proof} 
 We have 
$\pi^!([\hat\omega_\mathbb C])=[\omega]+[\bar\omega]\in \mathcal K$, and, when $m$ is even, $\pi^!([\xi^0(\omega)])=[\tilde\xi(\omega)]=[\xi^+][\omega]+[\eta^-][\omega]=[\xi^+]([\omega-\bar\omega])
+2^{r}[\bar\omega]=
\delta_m([\omega]-[\bar\omega])+2^{r-1}\pi^!([\hat\omega_\mathbb C])$.  
Therefore $\delta_m([\omega]-[\bar\omega])$ is in $\pi^!(\mathcal K^0).$ This shows that the homomorphism $\pi^!:\mathcal K^0\to \mathcal K$ is surjective. 

Since, by Lemma \ref{k-subring-image}, the kernel of $\pi^!:K^0(P(m,\nu))\to K^0(\mathbb S^m\times \mathbb CG(\nu))$ consists 
only of torsion elements, it follows that the same is true of 
$\mathcal K^0\to \mathcal K$.  By the same lemma, $\textrm{rank}(\mathcal K)=\textrm{rank}(K^0(P(m,\nu))),$ and since 
$\mathcal K^0\subset K^0(P(m,\nu))$, it follows from the surjectivity of $\mathcal K^0\to \mathcal K$ that $\textrm{rank}(\mathcal K^0)=\textrm{rank}(K^0(P(m,\nu)).$  Therefore $K^0(P(m,\nu))/\mathcal K^0$ 
is a torsion group.  Since $H^*(P(m,\nu);\mathbb Z)$ has no odd torsion, it follows (for example by using the Atiyah-Hirzebruch spectral sequence) that $K^0(P(m,\nu))$ has no odd torsion. 
This proves the last assertion of the theorem.
\end{proof} 

\begin{remark}
    It is clear that $\mathbb Z_{2^r}y$ is contained in the torsion subgroup of $K^0(P(m,\nu)).$ 
    We have not been able to determine the torsion part of $K^0(P(m,\nu)).$   This seems 
    to us to be a rather difficult, albeit an interesting problem. 
\end{remark}

\end{document}